
\documentclass[preprint,12pt,times]{elsarticle}





\usepackage[utf8]{inputenc}             
\usepackage[T1]{fontenc}                
\usepackage{natbib}
\usepackage[french,english]{babel}              
\usepackage{graphicx}                   
\usepackage{amsmath,amsfonts,amssymb}   
\usepackage{amsthm}
\usepackage{mathtools}
\mathtoolsset{showonlyrefs}
\usepackage{bm}
\usepackage{geometry}
\usepackage{algorithm}
\usepackage{algorithmic}
\usepackage{hyperref}
\usepackage{appendix}
\usepackage{color}
\usepackage{enumitem}
\usepackage{bm}
\usepackage{mathtools}
\usepackage{nccmath}
\usepackage{yhmath}
\usepackage{tikz}
\usetikzlibrary{arrows,decorations.pathmorphing,shapes}

 \makeatletter
 \def\ps@pprintTitle{%
  \let\@oddhead\@empty
  \let\@evenhead\@empty
  \def\@oddfoot{}%
  \let\@evenfoot\@oddfoot}
 \makeatother

\journal{Pattern Recognition}

\DeclareMathOperator{\id}{id}

\DeclareMathOperator{\R}{\mathbb{R}}

\newcommand{\dd}{\mathrm{d}}

\newtheorem{theo}{Theorem}[section]
\newtheorem{remark}{Remark}[section]

\newtheorem{lemma}{Lemma}[section]
\newtheorem{cor}{Corollary}[section]

\usepackage{chngcntr}
\usepackage{apptools}
\AtAppendix{\counterwithin{lemma}{section}}

\begin{document}

\begin{frontmatter}



\title{Elastic Metrics on Spaces of Euclidean Curves: Theory and  Algorithms}


\author[label1]{Martin Bauer}
\author[label2]{Nicolas Charon}
\author[label1]{Eric Klassen}
\author[label3]{Sebastian Kurtek}
\author[label1]{Tom Needham}
\author[label1]{Thomas Pierron}

\address[label1]{Florida State University, Department of Mathematics, Tallahassee, Florida, USA}
\address[label2]{Johns Hopkins University, Department of Applied Mathematics and Statistics, Baltimore, Maryland, USA}
\address[label3]{Ohio State University, Department of Statistics, Columbus, Ohio, USA}


\begin{abstract}
   A main goal in the field of statistical shape analysis is to define computable and informative metrics on spaces of immersed manifolds, such as the space of curves in a Euclidean space. The approach taken in the elastic shape analysis framework is to define such a metric by starting with a reparameterization-invariant Riemannian metric on the space of parameterized shapes and inducing a metric on the quotient by the group of diffeomorphisms. This quotient metric is computed, in practice, by finding a registration of two shapes over the diffeomorphism group. For spaces of Euclidean curves,  the initial Riemannian metric is frequently chosen from a two-parameter family of Sobolev metrics, called elastic metrics. Elastic metrics are especially convenient because, for several parameter choices, they are known to be locally isometric to Riemannian metrics for which one is able to solve the geodesic boundary problem explictly---well-known examples of these local isometries include the complex square root transform of Younes, Michor, Mumford and Shah and square root velocity (SRV) transform of Srivastava, Klassen, Joshi and Jermyn. In this paper, we show that the SRV transform extends to elastic metrics for all choices of parameters, for curves in any dimension, thereby fully generalizing the work of many authors over the past two decades. We give a unified treatment of the elastic metrics: we extend results of Trouv\'{e} and Younes, Bruveris as well as Lahiri, Robinson and Klassen on the existence of solutions to the registration problem, we develop algorithms for computing distances and geodesics, and we apply these algorithms to metric learning problems, where we learn optimal elastic metric parameters for statistical shape analysis tasks. 
\end{abstract}

\begin{keyword}
Shape analysis \sep Elastic metrics \sep Infinite-dimensional Riemannian geometry \sep Metric learning
\end{keyword}

\end{frontmatter}


\section{Introduction}

Shape is a fundamental physical property of objects and a key characteristic of their appearance in images. As a result, shape analysis plays a central role in various applications including computer vision, medical imaging, graphics, biology, bioinformatics and anthropology, among others. In these applications, one generally first extracts objects of interest from the imaging data, and then studies their shapes via appropriate mathematical representations and metrics. In statistical shape analysis, each observed shape is treated as a random object with the primary goal of developing tools for shape registration, comparison, statistical summarization, exploration of variability, clustering, classification and other statistical procedures. Each of the aforementioned statistical tasks heavily depends on the underlying representation and associated metric chosen for shape analysis.

There is a rich literature on shape analysis that considers various representations of shape including deformable templates \cite{grenander-miller:98}, ordered and unordered point sets \cite{dryden-mardia_book:98}, level sets \cite{osher-fedkiw}, medial axes \cite{Gorczowski}, and others. However, perhaps the most natural representation of a boundary of an object captured in an image is a parameterized curve. While accounting for the shape preserving transformations of rigid motion and scaling is fairly standard in this setting \cite{srivastava_2016}, one must additionally deal with parameterization variability inherent in the given data. Some past methods standardize parameterizations of observed curves to arc-length \cite{zahn-roskies:72}, but this has been shown to be suboptimal in many applications \cite{srivastava_2016}. A better solution is to determine optimal reparameterizations in a pairwise manner via a process referred to as \emph{registration}. This, in turn, requires a metric on the space of parameterized curves that is invariant to reparameterizations. The metric plays a key role in shape analysis as it is used for joint registration and comparison of shapes. Further, it serves as a backbone of other statistical procedures for shape data including averaging and principal component analysis.

In this article, we focus on shapes which are represented as curves in Euclidean space $\R^d$, $d \geq 2$. Our shape metrics arise as geodesic distances with respect to Riemannian metrics on the (infinite-dimensional) manifold $\mathcal I(D,\R^d)$, whose points are immersions, with domain $D$ either an interval or a circle. That is, each element of $\mathcal I(D,\R^d)$ is a smooth parameterized curve $c:D \to \R^d$ with nowhere-vanishing derivative. In order to induce a metric on the shape space of unparametrized curves, one requires the Riemannian metric on $\mathcal I(D,\R^d)$ to be invariant under reparameterizations. To be precise, the group $\mathcal D(D)$ of diffeomorphisms of $D$ acts on $\mathcal I (D,\R^d)$ by precomposition, and this action should be by isometries for the chosen Riemannian metric. Due to this property, the metric then descends to the quotient space $\mathcal I(D,\R^d)/\mathcal D(D)$ of curves considered up to precomposition with a diffeomorphism---that is, the quotient space can be considered as the space of unparameterized curves. The process of computing geodesic distances in that quotient space naturally involves solving a registration problem, so that the estimated geodesic distance eventually provides a meaningful metric for shape comparison. Moreover, the Riemannian formalism gives powerful tools for demonstrating the existence of optimal registrations, along with well-defined notions of tangent spaces, means, principal components, and other statistical concepts.

In this setup, a variety of different Riemannian metrics have been proposed in the literature. The arguably simplest one, the invariant $L^2$-metric, has a surprising degeneracy: it induces vanishing geodesic distance on both parametrized \cite{bauer2012vanishing} and unparametrized \cite{mumford2006riemannian} curves; i.e., any two curves (shapes) are regarded the same under the corresponding path-length distance. This behavior renders the $L^2$-metric unsuitable for any applications in shape analysis. Subsequently, several stronger Riemannian metrics have been proposed, that consequently induce a meaningful measure of similarity on shape space. This includes the class of almost local metrics \cite{mumford2006riemannian}, but also the family of (higher order) Sobolev type metrics, see e.g.~\cite{michor2007overview,bauer2014overview,srivastava2010shape,sundaramoorthi2007sobolev} and the references therein. In particular, Sobolev metrics of order one have attracted a large body of work and a two-parameter family of Riemannian metrics $G^{a,b}$, $a,b > 0$, has been proposed \cite{mio2007shape}; elements of this family are usually called \emph{elastic metrics}, for reasons that are highlighted in \ref{subsec:elasticity}. 
The goal of this paper is to develop a comprehensive theoretical and computational framework for the $G^{a,b}$ metrics on $\mathcal I(D,\R^d)$ and the quotient $\mathcal I(D,\R^d)/\mathcal D(D)$, for all parameters $a,b > 0$ and all dimensions $d \geq 2$. Before precisely stating our main contributions, we first give an overview of related work to provide appropriate context.

A crucial component of an efficient algorithm for computing the desired geodesic distances in the quotient space $\mathcal I (D,\R^d)/\mathcal D(D)$ is a method for computing distances in the  space $\mathcal I (D,\R^d)$. The family of elastic metrics is special: it has been shown that, for several values of the parameters $a$, $b$ and $d$, the geodesic boundary value problem, and consequently the induced geodesic distance, can be solved \emph{explicitly}. The typical method in the literature for deriving such explicit solutions is to construct an isometry $(\mathcal I (D,\R^d), G^{a,b}) \to (\mathcal M, G)$, for particular values of $a$, $b$ and $d$, to some Riemannian manifold with explicit formulas for geodesics, and to then describe geodesics in the source space via pulling back. For the choice of parameters $a=1$ and $b=\frac{1}{2}$, such an isometry is given by the well known \emph{square root velocity transform}, as developed in \cite{srivastava2010shape} for arbitrary $d \geq 2$. For $a=b=\frac{1}{2}$ and $d=2$, an isometry is given by the \emph{complex square root mapping}  constructed in \cite{younes2008metric}, which is based on identifying $\R^2$ with the complex plane. The complex square root mapping was generalized to curves in $\R^3$ by replacing complex constructions with their quaternionic counterparts in \cite{needham2018kahler, needham2019shape}.
For $a\leq 2b$ and $d=2$, a related construction has been developed in~\cite{bauer2014constructing}, where the target manifold is a space of curves in a Euclidean cone. Finally, for curves with values in $\mathbb R^2$, the transformations of \cite{younes2008metric,srivastava2010shape,bauer2014constructing} have been extended to general values of the parameters $a$ and $b$ using a local isometry defined, once again, via the identification of $\R^2$ with the complex plane \cite{needham2020simplifying}.

We can now state our main contributions and outline the structure of the paper.
\begin{itemize}
    \item {\bf Simplifying isometry for general elastic metric parameters (Section~\ref{sec:elastic_metric}).} We show that the square root velocity transform mentioned above can, in fact, be used as a simplifying isometry of $G^{a,b}$ for  general parameters $a$ and $b$ and for curves in arbitrary dimension (Theorem~\ref{genSRVT}), thereby fully generalizing the results of \cite{srivastava2010shape,bauer2014constructing,needham2020simplifying} and completing the story started over a decade ago in \cite{younes2008metric}. We use this result to  characterize the metric completion of the space of curves  and give an explicit formula for the distance in this space (Theorem~\ref{thm:completion}). This completion includes curves of lower regularity---in particular, it includes the class of piecewise linear curves, which is important for representing smooth curves in a discrete computational setting. We also present a (to our knowledge, novel) relation between elastic metrics and classical elasticity theory (\ref{subsec:elasticity}). 
    \item {\bf Existence of optimal reparameterizations (Section~\ref{sec:optimal_reparameterizations}).} To obtain the distance on the quotient shape space, one needs to consider the following problem: given two curves $c_1,c_2 \in \mathcal I(D,\R^d)$, find a reparameterization $\gamma \in \mathcal D(D)$ such that the geodesic distance between $c_1$ and $c_2 \circ \gamma$, with respect to a given elastic metric $G^{a,b}$, is minimized. It has been shown that a minimizer exists (within certain extensions of the set $\mathcal D(D)$) for parameters $a=1$ and $b=\frac{1}{2}$ (the original setting of the square root velocity transform), under certain regularity assumptions on the curves $c_i$ \cite{lahiri2015precise,bruveris2016optimal}. We extend these results to general parameters $a,b > 0$, under  technical regularity assumptions (Theorems~\ref{thm:existence}~and~\ref{thm:existenceclosed}) and lay out some precise open questions regarding dependence on regularity. Our proof uses classical results of Trouv\'{e} and Younes~\cite{article}
    \item {\bf Computational framework (Section~\ref{sec:algorithms}).} We develop a comprehensive  framework for computing geodesics in the quotient space $\mathcal I (D,\R^d)/\mathcal D(D)$. This computation involves optimizing geodesic distance over reparameterizations, as was described in the previous paragraph. We give an explicit polynomial-time algorithm to find exact solutions in the setting of piecewise linear curves (Theorem \ref{thm:explicit_algorithm}) and a faster dynamic programming algorithm for approximating the optimizer (Section~\ref{sec:dynamic_programming}). We illustrate this approach with several computational examples (Section~\ref{sec:examples}). Our code is available under an open source license\footnote{\url{https://github.com/charoncode/Gab_metrics}}.
    \item {\bf Metric Learning (Section~\ref{sec:metric_learning}).} Finally, we consider the following question: given a dataset of shapes, which metric from the family of elastic metrics gives the best performance on various statistical analysis tasks? We frame this as a \emph{metric learning} problem. A general approach to learning the appropriate parameters for a given dataset is suggested (Section \ref{ssec:pairwise_constraints}), based on foundational work in metric learning~\cite{xing2002distance}. We also give an alternative approach to parameter estimation with the aim of training for geometric protein classification (Section~\ref{ssec:shape_classification}). 
\end{itemize}

\subsubsection*{Acknowledgements}
M. Bauer has been supported by NSF-grants DMS-1912037 and DMS-1953244 and by the Austrian Science Fund grant P 35813-N, N. Charon by NSF grants DMS-1953267 and DMS-1945224, S. Kurtek by NSF grants CCF-1740761, CCF-1839252 and DMS-2015226, and NIH grant R37-CA214955, T. Needham by NSF grant DMS--2107808 and T.  Pierron by NSF-grant DMS-1912037.

\section{Simplifying Transform for General Elastic Metrics}\label{sec:elastic_metric}
In this section, we will introduce the basic concepts and spaces under consideration and introduce the class of Riemannian metrics
that will be of central interest. We then define a new family of transforms for simplifying these Riemannian metrics.

\subsection{Spaces of curves and elastic metrics}
\label{ssec:elastic_metrics}

In the following, assume $d\geq 2$ and let
\begin{equation}
\mathcal I(D,\R^d) = \{ c \in C^\infty(D,\R^d) \,:\, c'(u) \neq 0\; \forall u \in D\}\,,
\end{equation}
where $D$ is either the interval $I=[0,1]$ for open curves or the unit circle $S^1$ for closed curves.
This space is an open subset of the Fr\'{e}chet space $C^\infty(D,\R^d)$, and is thus an infinite-dimensional manifold with tangent space $T_c\mathcal I(D,\mathbb R^d)$ at a curve $c \in \mathcal{I} (D,\R^d)$ satisfying $T_c\mathcal I(D,\mathbb R^d) \approx C^{\infty}(D,\mathbb R^d)$; specifically, the identification is made by taking $C^\infty(D,\R^d)$ to be the space of smooth vector fields (or \emph{deformation fields}) along $c$.

This article is concerned with the family of elastic $G^{a,b}$-metrics on $\mathcal{I}(D,\R^d)$, indexed over pairs of positive constants $a$, $b > 0$:
\begin{align}
\label{eq:our_metric_ab}
G^{a,b}_c(h,k) = \int_D  a^2 ( D_s h^{\bot} \cdot D_s k^{\bot} )  + 
b^2  
 (D_s h^{\top} \cdot D_s k^{\top}) 
ds\,,
\end{align}
where $h,k \in T_c \mathcal I(D,\R^d)$ are deformation fields (tangent vectors) to the curve $c\in  \mathcal I(D,\R^2)$, $\cdot$ denotes the Euclidean inner product with norm $|\cdot |$ (evaluated pointwise),
$D_s = \frac{1}{|c'|} \frac{d}{du}$ and $ds = |c'| du$ are differentiation and integration with respect to arc length, respectively ($u$ denoting the parameter of $c$), and $\bullet^\top$ and $\bullet^\bot$ denote projection onto the normal and tangential part of a tangent vector, i.e.,
\begin{equation}
D_s h^{\top}=  \left(D_s h \cdot \tfrac{c'}{|c'|}\right) \tfrac{c'}{|c'|} \qquad \mbox{and} \qquad D_s h^{\bot}=D_s h-D_s h^{\top}.
\end{equation}
The terminology of "elastic metrics" for \eqref{eq:our_metric_ab}  often used in the literature \cite{younes1998computable,mio2007shape,jermyn2012elastic,needham2019knot} can be in fact justified from the theory of linear material elasticity,  specifically as the limit of the linear elastic energy of a deforming shell as it becomes infinitely thin. Such a connection was recently emphasized in \cite{charon2022shape} for the class of first order metrics on surfaces. We provide in \ref{subsec:elasticity} a similar and more direct derivation in the case of parametrized planar curves.

The group $\R^d$ acts on $\mathcal{I}(D,\R^d)$ by rigid translations. Each bilinear form \eqref{eq:our_metric_ab} is degenerate on the space of all curves and therefore only defines a Riemannian metric on the quotient $\mathcal I(D,\R^d)/\R^d$, which can be identified with the space $\mathcal I_0(D,\R^d)$ of curves starting at the origin. Once we have defined a Riemannian metric, we can consider the corresponding geodesic distance function
\begin{equation}
d_{a,b}(c_0,c_1)=\operatorname{inf} \int_0^1 \sqrt{G_c(c_t,c_t)} dt,
\end{equation}
where the infimum is taken over all paths $c:[0,1]\to \mathcal I_0(D,\R^d):t \mapsto c_t$ interpolating between the curves $c_0$ and $c_1$. For finite-dimensional Riemannian manifolds, geodesic distance is indeed a true metric, but this is not necessarily true in infinite dimensions: there are Riemannian  metrics such that the corresponding geodesic distance function is degenerate or might even vanish identically~\cite{mumford2006riemannian,bauer2020vanishing}. For the $G^{a,b}$-metrics this misbehavior has been ruled out~\cite{mumford2006riemannian,srivastava2010shape}, which consequently renders them as viable candidates for shape analysis.

On the space of immersions, there is a natural action by the orientation-preserving diffeomorphism group $\mathcal D(D)$ of the domain $D$: the reparametrization action. Given a curve $c\in \mathcal I_0(D,\R^d)$ and a diffeomorphism $\varphi\in \mathcal D(D)$ this action is given by composition from the right, i.e.,
\begin{equation}
\mathcal I_0(D,\R^d)\times \mathcal D(D)\to  \mathcal I_0(D,\R^d),\qquad (c,\varphi)\mapsto c\circ\varphi.
\end{equation}
A straightforward calculation shows that the $G^{a,b}$-metrics are invariant under this action, i.e.,
\begin{equation}
G^{a,b}_c(h,k)=G^{a,b}_{c\circ\varphi}(h\circ\varphi,k\circ\varphi), \qquad h,k\in T_c  \mathcal I_0(D,\R^d),\quad \varphi\in \mathcal D(D).  
\end{equation}
Consequently, they descend to Riemannian metrics on the quotient shape space of immersions modulo parametrizations $\mathcal S(D,\mathbb R^d):=\mathcal I_0(D,\R^d)/\mathcal D(D)$. On the quotient space, the corresponding geodesic distance function can be calculated via
\begin{equation}
d^{\mathcal S}_{a,b}([c_0],[c_1])=\underset{\varphi \in \mathcal D(D)}{\operatorname{inf}}d_{a,b}(c_0,c_1\circ \varphi).
\end{equation}

\subsection{The square root velocity transform for general elastic metrics}

As we overviewed in the introduction, there have been many approaches in the literature to understanding elastic metrics through \emph{simplifying transformations} \cite{younes2008metric,srivastava2010shape,bauer2014constructing,needham2020simplifying}. That is, these works establish (local) isometries of the form $(\mathcal I (D,\R^d), G^{a,b}) \to (\mathcal M, G)$, for some choice of parameters $a$, $b$, and $d$, where the target space is some Riemannian manifold with an easy-to-describe geodesic structure. Such a transformation allows efficient computations involving the elastic metric by transferring them to the simple target space. Of particular interest for this paper is the \emph{square root velocity transform} of Srivastava et al., which we denote as
\begin{equation}\label{eqn:SRVT}
  \begin{aligned}
R: \mathcal I_0([0,1],\mathbb R^d) &\to C^{\infty}([0,1],\mathbb R^d\setminus\{0\})  \\
c&\mapsto \frac{c'}{\sqrt{|c'|}}.
  \end{aligned}
  \end{equation}
It was shown in~\cite{srivastava2010shape} that $R$ is an isometry of the elastic metric $G^{1,\frac{1}{2}}$ and the standard $L^2$ metric on $C^{\infty}([0,1],\mathbb R^d\setminus\{0\})$, for any $d \geq 2$. Our first main result below will show that $R$ is, for general parameters $a,b$, an isometry of $G^{a,b}$ and a Riemannian metric on $C^{\infty}([0,1],\mathbb R^d\setminus\{0\})$ which is non-Euclidean, but still simple enough to admit explicit geodesic distances.

In order to formulate this result, we first introduce a Riemannian metric on $\mathbb R^d\setminus\{0\}$. For  $q\in \R^d\setminus\{0\}$, we identify $T_q\left(\R^d\setminus\{0\}\right)$ with $\R^d$ in the obvious way. We then decompose the tangent space via $T_q\left(R^d\setminus\{0\}\right)=V\oplus W$, where $V=\R q$ and $W=V^{\bot}$, where the orthogonal complement is with respect to the standard dot product on $\R^d$. We then define a Riemannian metric $g^{\lambda}$ on $T_q\left(\R^d\setminus\{0\}\right)$ as follows:
\[
g^\lambda_q(v,w)=\lambda^2  (v^{\bot}\cdot w^{\bot}) +  (v^{\top}\cdot w^{\top}) ,
\]
 where, in analogy with the notation used in Section \ref{ssec:elastic_metrics}, $w^{\top}=\frac{( w\cdot q)}{( q\cdot q)}q$ is the projection of $w$ onto $V$ and $w^{\bot}=w-w^{\top}$ is the projection of $w$ onto $W$.  For $\lambda=1$, this is just a restriction of the standard Euclidean metric on $\R^d$; for $\lambda<1$, it makes $\R^d \setminus \{0\}$ isometric to a dense subset of a cone in $\R^{d+1}$ with an acute angle at the cone point; for $\lambda>1$, $(\R^d \setminus \{0\}, g^\lambda)$ does not isometrically embed in $\R^{d+1}$. 
 
 The metric $g^{\lambda}$ induces an $L^2$-metric on the space of smooth curves in  $\R^d \setminus \{0\}$:
 \begin{align}
G^{L^2_\lambda}_q(q_1,q_2)=\int_0^1 g^{\lambda}_q(q_1,q_2)du.
 \end{align}
We now state our first main result, whose proof is postponed to  \ref{appendix:existenceopen}.
\begin{theo}\label{genSRVT}
For $\lambda=\tfrac{a}{2b}$, the square root velocity transform $R$, defined in \eqref{eqn:SRVT}, is an isometry of $\left(\mathcal I_0([0,1],\mathbb R^d),G^{a,b}\right)$ and $\left(C^{\infty}([0,1],\mathbb R^d\setminus\{0\}),4b^2 G^{L^2_{\lambda}}\right)$.
Furthermore, for each $c_0\in \mathcal I_0([0,1],\mathbb R^d)$ there exists a neighborhood $\mathcal U(c_0)$ such that the geodesic distance between $c_0$ and any $c_1\in \mathcal U(c_0)$ is given by:
\begin{equation}\label{eq:dist}
\operatorname{dist}_{a,b}(c_1,c_2)=2b\sqrt{\ell_{c_1}+\ell_{c_2}-2 \int_D  \sqrt{|c_1'||c_2'|}\cos\left(\tfrac{a}{2b}\theta\right) du},
\end{equation}
where 
\begin{equation}
\theta(u)=\cos^{-1}(R(c_1)\cdot R(c_2)/|R(c_1)||R(c_2)|). 
\end{equation}
If $d\geq 3$, then the formula for the geodesic distance holds globally, i.e., for arbitrary $c_1\in \mathcal I_0([0,1],\mathbb R^d)$, after replacing the formula for $\theta$ by
\begin{equation}
\theta(u)=
\operatorname{min}\left(\cos^{-1}(R(c_1)\cdot R(c_2)/|R(c_1)||R(c_2)|),\tfrac{2b\pi}{a}\right). 
\end{equation}
\end{theo}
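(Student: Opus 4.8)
\emph{Strategy.} The isometry assertion is the engine; once it is in hand, the distance formula reduces to a computation on $\mathbb R^d\setminus\{0\}$ together with a standard (but here somewhat delicate) identification of geodesic distance for an $L^2$-type metric on a mapping space. I would begin with the isometry. First note that $R$ of \eqref{eqn:SRVT} is a bijection onto $C^\infty([0,1],\mathbb R^d\setminus\{0\})$ with smooth inverse $q\mapsto\big(u\mapsto\int_0^u|q(t)|\,q(t)\,dt\big)$, since $c'=|R(c)|\,R(c)$. Writing $T=c'/|c'|$ for the unit tangent and differentiating $R(c)=|c'|^{-1/2}c'$ along a deformation field $h$, a short computation yields
\begin{equation}
dR_c(h)=|c'|^{1/2}\big(\tfrac12 D_sh^{\top}+D_sh^{\bot}\big),
\end{equation}
where the $\top/\bot$ splitting is taken relative to $T$. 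Because $R(c)$ is a positive multiple of $T$, this is precisely the $V\oplus W$ splitting in the definition of $g^{\lambda}$ at the point $q=R(c)$; substituting into $g^{\lambda}$, multiplying by $4b^2$, integrating in $u$ with $ds=|c'|\,du$, and using $4b^2\lambda^2=a^2$ for $\lambda=\tfrac{a}{2b}$ recovers exactly $G^{a,b}_c(h,k)$ from \eqref{eq:our_metric_ab}. This is the one step I would carry out in full: it is routine, but the bookkeeping of the two projections must be done with care.

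\emph{Reduction to a pointwise problem.} Since $R$ is an isometry, $\operatorname{dist}_{a,b}(c_1,c_2)$ equals the geodesic distance between $R(c_1)$ and $R(c_2)$ for the metric $4b^2G^{L^2_{\lambda}}$ on curves valued in $N:=(\mathbb R^d\setminus\{0\},g^{\lambda})$. For such an $L^2$-type metric one always has the lower bound
\begin{equation}
\operatorname{dist}_{a,b}(c_1,c_2)^2\ \geq\ 4b^2\int_0^1 d_N\big(R(c_1)(u),R(c_2)(u)\big)^2\,du,
\end{equation}
obtained by reparametrizing a competitor path to constant speed (so that its squared length equals its energy), interchanging the $t$- and $u$-integrations, and bounding each $u$-slice energy below by the squared $N$-distance of its endpoints via Cauchy--Schwarz.

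\emph{The cone distance and the formula.} In polar coordinates $q=r\omega$ one computes $g^{\lambda}=dr^2+\lambda^2 r^2 g_{S^{d-1}}$, so $N$ is the metric cone over the round sphere of radius $\lambda$. A minimizing geodesic projects to a great-circle arc and hence lies in a $2$-plane through the origin, and on the resulting flat cone of total angle $2\pi\lambda$ the law of cosines gives
\begin{equation}
d_N(q_1,q_2)^2=r_1^2+r_2^2-2r_1r_2\cos\big(\min(\lambda\theta_0,\pi)\big),\qquad \theta_0=\cos^{-1}\frac{q_1\cdot q_2}{|q_1|\,|q_2|},
\end{equation}
the $\min$ encoding the regime in which the infimizing path degenerates through the missing cone point, with value $r_1+r_2$. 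Inserting $r_i=|R(c_i)|=\sqrt{|c_i'|}$, $\int_0^1 r_i^2\,du=\ell_{c_i}$, $\lambda=\tfrac{a}{2b}$, and $\min(\lambda\theta_0,\pi)=\tfrac{a}{2b}\min(\theta_0,\tfrac{2b\pi}{a})$ turns the pointwise integral into exactly the right-hand side of \eqref{eq:dist} with the capped angle $\theta=\min(\cos^{-1}(\cdots),\tfrac{2b\pi}{a})$.

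\emph{Upper bound; local versus global and the main obstacle.} What remains, and is the hard part, is the matching upper bound: for each $\varepsilon>0$, build a smooth curve-space path of squared length within $\varepsilon$ of the integral above. Via the isometry this amounts to selecting, for (a.e.)\ $u$, a minimizing $g^{\lambda}$-geodesic from $R(c_1)(u)$ to $R(c_2)(u)$ that depends smoothly enough on $u$, and reassembling. When $\theta_0(u)<\pi$ for all $u$ --- in particular whenever $c_1,c_2$ lie in a small enough neighborhood $\mathcal U(c_0)$ of a base curve, so that the angles $\theta(u)$ are uniformly small --- this geodesic is unique, hence smooth in $u$, the cap is inactive, and we obtain the local statement with plain $\cos^{-1}(\cdots)$. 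The only obstruction is at parameters $u$ where $R(c_1)(u)$ and $R(c_2)(u)$ point in antipodal directions, where the minimizing geodesic is non-unique or exits through the cone point; for $d=2$ a smooth selection across such $u$ can genuinely fail (essentially one $2$-plane is available), whereas for $d\geq3$ the minimizing geodesics between antipodal points form a connected family (an $S^{d-2}$ of $2$-planes, or one may route near the cone point), so after an arbitrarily small perturbation a smooth selection exists and the global formula follows. I expect the appendix proof to spend most of its effort precisely here --- on this selection/approximation argument --- together with the routine-but-necessary justification of the length--energy manipulations for the weak Riemannian metric $4b^2G^{L^2_{\lambda}}$.
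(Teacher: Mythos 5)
Your proposal is correct and follows essentially the same route as the paper's appendix proof: the same computation of $d_cR(h)$ and its tangential/normal components, the same reduction of the $L^2$-path-space distance to the pointwise distance in $(\mathbb R^d\setminus\{0\},g^\lambda)$ (which the paper delegates to a cited result rather than the Cauchy--Schwarz/selection sketch you give), the same cone-unrolling law-of-cosines computation as in Lemma~\ref{geod_dist_glambda}, and the same perturbation argument distinguishing $d=2$ from $d\geq 3$. No substantive differences to report.
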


To deal with the difficulty in the $d = 2$ case (the formula of the geodesic distance being only valid locally), we can extend geodesics across
the origin and obtain $C^\infty(I, \mathbb R^d_\lambda)$ as the geodesic completion in the sense of \cite{khesin2004flow,bruveris2016optimal}. This allows us to interpret \eqref{eq:dist} as the geodesic distance on the geodesic completion. We will now extend the formula for the geodesic distance of Theorem \ref{genSRVT} to the metric completion, which will be important in the next section where we will prove the existence of optimal reparametrizations.
\begin{cor}\label{thm:completion}
The completion (in the sense of Lemma~\ref{lem:complL2}) of $\mathcal I_0([0,1],\mathbb R^d)$ with the $G^{a,b}$ metric is the space of absolutely continuous open curves $AC_0([0,1],\mathbb R^d)$. For any two curves $c_1,c_2\in AC_0([0,1],\mathbb R^d)$ their corresponding geodesic distance is given by
\begin{equation}\label{eq:dist2}
\operatorname{dist}_{a,b}(c_1,c_2)=2b\sqrt{\ell_{c_1}+\ell_{c_2}-2 \int_0^1  \sqrt{|c_1'||c_2'|}\cos\left(\tfrac{a}{2b}\theta\right) du}.
\end{equation}
where $\ell_{c_j}$ is the length of the curve $c_j$ and
\begin{equation}
\theta(u)=\begin{cases}
\operatorname{min}\left(\cos^{-1}(c'_1(u)\cdot c'_2(u)/|c'_1(u)||c'_2(u)|),\tfrac{2b\pi}{a}\right) &\text{ if $c'_1(u), c'_2(u) \neq 0$}\\
 \tfrac{2b\pi}{a} &\text{ otherwise.}
\end{cases}
\end{equation}
\end{cor}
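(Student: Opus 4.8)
The plan is to transport the entire statement across the square root velocity transform $R$ and reduce it to a completion statement about $L^2$-type spaces of curves valued in $\mathbb{R}^d_\lambda := (\mathbb{R}^d\setminus\{0\}, g^\lambda)$, which is the content of the auxiliary Lemma~\ref{lem:complL2} referenced in the statement. Concretely, Theorem~\ref{genSRVT} tells us that $R$ is an isometry from $(\mathcal I_0([0,1],\mathbb R^d), G^{a,b})$ onto $(C^\infty([0,1],\mathbb R^d\setminus\{0\}), 4b^2 G^{L^2_\lambda})$ with $\lambda = a/(2b)$. Completions are functorial under isometries, so the completion of the left-hand side is isometric to the completion of the image. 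First I would identify the latter: the map $c \mapsto R(c) = c'/\sqrt{|c'|}$ extends to a bijection between absolutely continuous curves starting at the origin, $AC_0([0,1],\mathbb R^d)$, and $L^2([0,1], \mathbb R^d)$ curves valued in $\mathbb R^d_\lambda$ (with the convention that the zero vector is allowed in the completed target, corresponding to points where $c'$ vanishes), via $q \mapsto c(t) = \int_0^t q(u)|q(u)|\,du$. Indeed $|q|^2 = |c'|$, so $q \in L^2$ exactly when $c' \in L^1$, i.e. $c \in AC_0$; and one checks the inverse formula is well-defined and continuous for the relevant topologies. This is the standard argument already used in \cite{srivastava2010shape, bruveris2016optimal} for the case $a=1$, $b=1/2$, and it goes through verbatim for general $\lambda$ since the underlying substitution does not see the metric $g^\lambda$.

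Next I would compute the distance formula on the completed target and pull it back. On $(C^\infty([0,1],\mathbb R^d\setminus\{0\}), G^{L^2_\lambda})$, the geodesic distance decouples pointwise: since the metric is an $L^2$-integral of the fixed Riemannian metric $g^\lambda$ on $\mathbb R^d\setminus\{0\}$, the squared geodesic distance between two curves $q_1, q_2$ is $\int_0^1 d_{g^\lambda}(q_1(u), q_2(u))^2\,du$, where $d_{g^\lambda}$ is the geodesic distance on $(\mathbb R^d_\lambda)$ — this is the standard ``distance commutes with $L^2$'' principle for $L^2$ metrics on mapping spaces, valid on the metric completion. So the whole problem reduces to: (i) identify $d_{g^\lambda}$ on $\mathbb R^d\setminus\{0\}$, including the limiting behavior as one or both arguments approach $0$; and (ii) substitute back through $R$. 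For (i), one uses that $(\mathbb R^d_\lambda)$ is, away from the cone issue, a metric cone over a round sphere of radius $\lambda$: writing $q_i = r_i \omega_i$ with $r_i = |q_i|$ and $\omega_i \in S^{d-1}$, and letting $\psi$ be the angle between $\omega_1$ and $\omega_2$, the cone distance is $d_{g^\lambda}(q_1,q_2)^2 = r_1^2 + r_2^2 - 2 r_1 r_2 \cos(\min(\lambda \psi, \pi))$, with the $\min$ coming from the fact that geodesics through (or near) the cone point realize angle $\pi$ once $\lambda\psi \ge \pi$; when $q_1$ or $q_2$ is $0$ the formula degenerates to $d_{g^\lambda}(0, q)^2 = |q|^2 \cdot \lim = r^2$ (attained by the radial segment), consistent with taking $\cos$ of the truncated angle $\pi = \lambda\cdot(\pi/\lambda) = 2b\pi/a$. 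This is exactly where the two cases in the definition of $\theta(u)$ in the corollary come from. For (ii), note $r_i = |q_i| = \sqrt{|c_i'|}$, so $r_1^2 + r_2^2 = |c_1'| + |c_2'|$ and $r_1 r_2 = \sqrt{|c_1'||c_2'|}$, while $\psi(u)$ is the angle between $R(c_1)(u)$ and $R(c_2)(u)$, which is the same as the angle between $c_1'(u)$ and $c_2'(u)$ since $R$ only rescales $c'$ by a positive scalar. Integrating $d_{g^\lambda}(q_1(u),q_2(u))^2$ over $u$, multiplying by the $4b^2$ from Theorem~\ref{genSRVT}, taking the square root, and using $\int_0^1 |c_i'|\,du = \ell_{c_i}$ yields precisely \eqref{eq:dist2}.

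The main obstacle I anticipate is the careful handling of the cone point in the completion, i.e. making rigorous the claim that the completed target space is genuinely $L^2([0,1], \mathbb R^d_\lambda)$ with the zero vector adjoined as a single point (the cone tip), and that the pointwise distance formula above is continuous and correct up to and including configurations where $c'$ vanishes on a set of positive measure. Two subtleties arise. First, for $\lambda > 1$ (equivalently $a > 2b$), the space $(\mathbb R^d\setminus\{0\}, g^\lambda)$ does not embed in $\mathbb R^{d+1}$, but it is still a smooth cone metric and the cone distance formula with the $\min(\lambda\psi, \pi)$ truncation remains valid — this needs a short argument that geodesics either avoid the tip (giving a sphere-of-radius-$\lambda$ great-circle segment when $\lambda\psi \le \pi$) or pass through it (giving distance $r_1 + r_2$, matched by $\cos\pi = -1$), and that $\lambda\psi \le \pi$ vs.\ $> \pi$ is the correct dichotomy; this is classical for metric cones but worth stating. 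Second, one should verify that for Cauchy sequences $c_n \in \mathcal I_0$ whose SRV transforms $R(c_n)$ converge in $L^2$ to some $q_\infty$ that vanishes on a positive-measure set, the limiting object is still naturally an element of $AC_0$ (namely $c_\infty(t) = \int_0^t q_\infty |q_\infty|$), so nothing pathological happens — this follows from the $L^2 \to L^1$ boundedness of the reconstruction map and dominated convergence. Beyond these points, the argument is essentially bookkeeping: the real mathematical content is Theorem~\ref{genSRVT} and Lemma~\ref{lem:complL2}, and the corollary is the composition of those two facts with the explicit cone distance.
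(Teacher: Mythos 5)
Your proposal is correct and follows essentially the same route as the paper: the paper's own proof of this corollary is a one-line reduction to Lemma~\ref{geod_dist_glambda} (the cone distance on $\mathbb R^d_\lambda$, established via the sector isometry $F$), the $L^2$-decoupling of geodesic distance in path spaces, Lemma~\ref{lem:complL2} (identifying the completion of the SRV image as $L^2([0,1],\mathbb R^d_\lambda)$), and the substitution back through $R$ — precisely the steps you spell out, including the correct handling of the $\min(\lambda\psi,\pi)$ truncation and the degenerate case at the cone tip.
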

 
\subsection{Relation to previous work}\label{sec:relation_to_previous_work}

In this subsection, we pin down the precise relationship between Theorem \ref{genSRVT} and previous work on simplifying transforms~\cite{younes2008metric,srivastava2010shape,bauer2014constructing,needham2020simplifying}.

The transform in the literature which is most relevant to our result is obviously the square root velocity transform \eqref{eqn:SRVT}, which was shown in \cite{srivastava2010shape} to be an isometry of $G^{1,\frac{1}{2}}$ and the standard $L^2$ metric on $C^\infty([0,1],\R^d \setminus \{0\})$ for arbitrary $d \geq 2$. This result is recovered directly from Theorem~\ref{genSRVT}.

The \emph{complex square root map} of~\cite{younes2008metric} takes an immersion $c$ in the plane to the curve $\sqrt{c'}$, where the square root is computed pointwise by considering $c$ as a complex-valued function---there is some ambiguity here, so the square root curve is chosen in a way to make it continuous. This transform was shown to be a local isometry of $G^{\frac{1}{2},\frac{1}{2}}$ with the standard $L^2$ metric on $C^\infty([0,1],\mathbb{C}) \approx C^\infty([0,1],\R^2)$. In~\cite{needham2020simplifying}, it was shown that the complex square root map fits into a family of maps $F_{a,b}$, defined on a smooth plane curve $c$ by $F_{a,b}(c) = 2b|c'|^{\frac{1}{2}} \left(c'/|c'|\right)^{\frac{a}{2b}}$, with exponentiation once again performed using the identification of $\R^2$ with the complex plane and choosing a continuous curve as the image; when $a = b = \frac{1}{2}$, $F_{a,b}$ reduces to the complex square root map. It was shown in~\cite{needham2020simplifying} that $F_{a,b}$ defines a local isometry between $G^{a,b}$ and the $L^2$ metric, for any choices of $a,b > 0$. In fact, the $F_{a,b}$ transform factors as

\begin{tikzpicture}[->,>=stealth',auto,node distance=5.2cm,
  thick,main node/.style={}]
  \hspace{-.4cm}
  \node[main node] (1) {$\left(\mathrm{Imm}_0([0,1],\mathbb{C}),G^{a,b}\right)$};
  \node[main node] (2) [right of=1] {$\left(C^\infty([0,1],\mathbb{C}^\ast),4b^2G^{L^2_\lambda}\right)$};
  \node[main node] (3) [right of=2] {$\left(C^\infty([0,1],\mathbb{C}^\ast),G^{L^2}\right)$};

  \path[every node/.style={font=\sffamily\small}]
    (1) edge node [right,midway,above] {$R$} (2)
    (2) edge node [right,midway,above] {$S_{a,b}$} (3)
    (1) edge[bend right=15] node [left,midway,above] {$F_{a,b}$} (3);
\end{tikzpicture}

\noindent where $\mathbb{C}^\ast = \mathbb{C} \setminus \{0\}$,  $\lambda = \frac{a}{2b}$, $G^{L^2} = G^{L^2_1}$ is the standard $L^2$ metric and $S_{a,b}$ is a local isometry defined by $S_{a,b}(q) = 2 b |q|^{1-\lambda} q^\lambda$---the local isometry claim can be seen via calculations in complex coordinates, similar to the proof of~\cite[Theorem 2.3]{needham2020simplifying}. The takeaway from Theorem~\ref{genSRVT} is that the geometry of the metric $G^{L^2_\lambda}$ is simple enough that we can work in the middle space of this diagram, allowing us to avoid technical issues with isometries only being local, while simultaneously allowing the result to be generalized to arbitrary dimension.

We should also mention~\cite{bauer2014constructing}, which gave a similar family of generalizations of the complex square root map for plane curves, valid for $G^{a,b}$ with $a \leq 2b$. In this case, a curve $c$ in $\mathbb{C}$ is mapped to the curve in $\R^3 \approx \mathbb{C} \times \R$ given by 
\[
R_{a,b}(c) = |c'|^{\frac{1}{2}} \left(a\frac{c'}{|c'|}, \sqrt{4b^2 - a^2}\right).
\]
The image of $R_{a,b}$ lies on a certain cone in $\R^3$ and it is shown in~\cite{bauer2014constructing} that the transform is an isometry of $G^{a,b}$ and the metric on the cone induced from the ambient Euclidean metric. As in the case of the $F_{a,b}$ transform described above, one can factor $R_{a,b}$ as the SRV transform $R$ followed by an isometry between the space of curves in the plane and the space of curves in the cone. 

\section{Existence of Optimal Reparameterizations}\label{sec:optimal_reparameterizations}

\subsection{Existence of optimal reparametrizations for open curves}
In the previous section we saw that the set of absolutely continuous functions provides the natural space for studying the geodesic distance function of the family of elastic metrics. In the following, we aim to prove the existence of optimal reparametrizations in this space.  Let $\operatorname{dist}^{\mathcal S}_{a,b}$ be the induced distance of the elastic $G^{a,b}$-metric on the space of absolutely continuous, unparametrized and open curves $\mathcal S(I,\mathbb R^d):=\operatorname{AC}_0(I,\mathbb R^d)/\Gamma$, which is defined by
\begin{equation}
\operatorname{dist}^{\mathcal S}_{a,b}([c_1],[c_2])=\operatorname{inf}_{\gamma \in \Gamma}\operatorname{dist}_{a,b}(c_1,c_2\circ \gamma),\label{quotient-distance1}
\end{equation}
where $\operatorname{dist}_{a,b}$ denotes the geodesic distance function on the space of parmetrized open curves $\operatorname{AC}_0(I,\mathbb R^d)$, and where $\Gamma$ denotes the group of absolutely continuous diffeomorphisms on $I$, i.e.,
 \begin{align*}
\Gamma=\{\gamma \in AC(I,I): \ \gamma(0)=0, \ \gamma(1)=1, \ \gamma'>0\quad a.e. \}. 
 \end{align*}
We will also need the closure (with respect to the norm topology) of this group $\bar \Gamma$, which is the semigroup
 of weakly increasing, absolutely continuous functions, i.e.,
 \begin{align*}
\bar \Gamma=\{\gamma \in AC(I,I): \ \gamma(0)=0, \ \gamma(1)=1, \ \gamma'\geq 0\quad a.e. \}. 
\end{align*}
With this notation, the induced distance
$\operatorname{dist}^{\mathcal S}_{a,b}$ on the quotient space $\mathcal S(I,\mathbb R^d)$ can be equivalently expressed via
\begin{align}
\label{quotient-distance2}
\operatorname{dist}^{\mathcal S}_{a,b}([c_1],[c_2])&=\inf_{\gamma\in \Gamma}\operatorname{dist}_{a,b}(c_1,c_2\circ \gamma) 
=
\inf_{\gamma_1,\gamma_2\in \Gamma}\operatorname{dist}_{a,b}(c_1\circ\gamma_1,c_2\circ \gamma_2) \nonumber\\
&=
\inf_{\gamma_1,\gamma_2\in \bar \Gamma}\operatorname{dist}_{a,b}(c_1\circ\gamma_1,c_2\circ \gamma_2).
\end{align}
Here, the second equality in the first line follows from the invariance of the distance, whereas the third equality follows from the density of $\Gamma$ in $\bar \Gamma$.
Our main result of this section concerns the existence of optimal reparametrizations, i.e., the existence of reparametrization functions such that the infimum is attained. We will see that for $a\leq b$ we really need two reparametrization functions in $\bar \Gamma$, while for $a>b$ the infimum can attained by one reparametrization function. Before we formulate the theorem we need to introduce the function space of piecewise differentiable functions: 
\begin{align*}
PC^1(I,\mathbb R^d):= &\left\{c\in C(I, \mathbb R^d): \exists \;  0=t_0<t_1<\ldots<t_n=1\right. \\
&\qquad \qquad \qquad \qquad \left.\text{ s.t. } c|_{(t_i,t_{i+1})}\in C^1\big((t_i,t_{i+1}),\R^d\big)  \right\}.
\end{align*}
Note that $PC^1(I,\mathbb R^d)\subset AC(I,\mathbb R^d)$. We also need to introduce the space $\mathcal{D}^*$ of all functions that can be written as
\begin{equation}
\phi(s) = \mu([0,s[)
\label{eq:def_mu}
\end{equation}
where $\mu$ is a probability measure on $I$. These are equivalently (c.f. \cite{cohn2013measure}, Chapter 4) bounded variation (BV) functions, which are non-decreasing and left-continuous on $I$. 
We will denote the right limit of $\phi$ at $x$ by $\phi(x+0^+)$. 
Furthermore, we recall that since $\phi$ is nondecreasing, $\phi$ is differentiable almost everywhere. 
For $\phi \in \mathcal{D}^*$, we also introduce a generalized inverse $\phi^- \in \mathcal{D}^*$ defined by:
$$
\phi^-(y) = \sup\{x \in [0,1], \ \phi(x)<y\},
$$
where we use the convention $\sup \emptyset = 0$; c.f.\ \cite[Section 5.2.3]{article}. 

We are now able to formulate the main result of this section.
\begin{theo}\label{thm:existence}
Let $c_1,c_2\in PC^1(I,\mathbb R^d)$. Then the distance $\operatorname{dist}^{\mathcal S}_{a,b}$ on the quotient space $\mathcal S(I,\mathbb R^d)$ is equivalently given by a supremum over the space $\mathcal D^*$, i.e.,
\begin{align}\label{quotien_distance_BV}
\operatorname{dist}^{\mathcal S}_{a,b}([c_1],[c_2])&=
2b\sqrt{\ell_{c_1}+\ell_{c_2}-2\sup_{\phi\in \mathcal D^*}\int_I\sqrt{\dot\phi(x)} f_{a,b}\left(x,\phi(x)\right) d x},
\end{align}
where $f_{a,b} : I\times I  \rightarrow  \mathbb{R}$ is defined by
\begin{equation}\label{eq:f_ab}
f_{a,b}(x,y )=\left\{ \begin{array}{cl}
          \sqrt{|\dot{c}_1(x)| |\dot{c}_2(y)|}\cos\left(\frac{a}{2b} \cos^{-1}(\frac{\dot{c}_1(x)\cdot \dot{c}_2(y)}{|\dot{c}_1(x)| |\dot{c}_2(y)|})\right) & \mbox{ if } \frac{a}{2b} \cos^{-1}(\frac{\dot{c}_1(x)\cdot \dot{c}_2(y)}{|\dot{c}_1(x)| |\dot{c}_2(y)|}) \leq \frac{\pi}{2}   \\
          0 & \mbox{ otherwise.}  
     \end{array} \right. 
\end{equation}

\noindent
We have the following statements concerning the existence of optimal reparametrizations:
\begin{enumerate}
    \item $\mathbf{a<b:}$ for any $c_1,c_2\in PC^1(I,\mathbb R^d)$ there exists a strictly increasing homeomorphism  such that the infimum in~\eqref{quotient-distance1} is attained. If the derivatives $\dot c_1, \dot c_2$ are Lipschitz continuous then $\gamma \in \operatorname{Diff}_{C^1}(I)$, i.e., the optimal reparametrization is a $C^1$-diffeomorphism. 
    \item $\mathbf{a\geq b:}$ for  $c_1,c_2\in PC^1(I,\mathbb R^d)$ there exists a pair of generalized reparametrizations $\gamma_1,\gamma_2\in \bar \Gamma$ such that the infimum in~\eqref{quotient-distance2} is attained. 
    On the other hand, there exists a pair of curves $c_1,c_2\in AC(I,\mathbb R^d)$ such that the infimum in~\eqref{quotient-distance2} is not attained in $\bar \Gamma$ and consequently neither in $\Gamma$ or $\mathcal D^*$.
\end{enumerate}
\end{theo}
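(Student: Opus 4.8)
\emph{Proof plan.} Write $\lambda=\tfrac{a}{2b}$ and, for $x,y\in I$, let $\alpha(x,y)\in[0,\pi]$ denote the angle between $\dot c_1(x)$ and $\dot c_2(y)$. The first step is to rewrite \eqref{quotient-distance2} using the distance formula \eqref{eq:dist2} of Corollary~\ref{thm:completion}. For $\gamma_1,\gamma_2\in\bar\Gamma$ the curves $c_i\circ\gamma_i$ are absolutely continuous, have the same length as $c_i$ (change of variables for monotone absolutely continuous maps), and the angle between their velocities at a parameter $\tau$ equals $\alpha(\gamma_1(\tau),\gamma_2(\tau))$; hence minimizing $\operatorname{dist}_{a,b}$ over $\bar\Gamma\times\bar\Gamma$ is equivalent to maximizing
\[
J(\gamma_1,\gamma_2)=\int_0^1\sqrt{\dot\gamma_1(\tau)\,\dot\gamma_2(\tau)\,|\dot c_1(\gamma_1(\tau))|\,|\dot c_2(\gamma_2(\tau))|}\;\cos\big(\min(\lambda\,\alpha(\gamma_1(\tau),\gamma_2(\tau)),\pi)\big)\,d\tau .
\]
I would then pass from this two-sided problem to a one-sided problem over $\mathcal D^*$, using two observations: (i) $f_{a,b}$ in \eqref{eq:f_ab} is exactly the nonnegative part of $(x,y)\mapsto\sqrt{|\dot c_1(x)||\dot c_2(y)|}\cos(\min(\lambda\alpha(x,y),\pi))$; and (ii) on any subinterval where one of $\gamma_1,\gamma_2$ is constant the integrand vanishes, so negative values of the cosine can always be avoided by advancing one curve at a time. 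Concretely, to a pair $(\gamma_1,\gamma_2)$ one associates $\phi=\gamma_2\circ\gamma_1^-\in\mathcal D^*$ via the generalized inverse of \cite[Section 5.2.3]{article}; the substitution $x=\gamma_1(\tau)$ gives $J(\gamma_1,\gamma_2)\le K(\phi):=\int_0^1\sqrt{\dot\phi(x)}\,f_{a,b}(x,\phi(x))\,dx$. Conversely, given $\phi\in\mathcal D^*$ I would first move the derivative mass of $\phi$ lying over $\{x:f_{a,b}(x,\phi(x))=0\}$ into additional jumps (this changes neither $K(\phi)$ nor $\dot\phi$ elsewhere), and then approximate by $\gamma_n\in\Gamma$ obtained by spreading jumps over intervals of vanishing length and giving flat pieces vanishing slope; each such modification contributes $O(\sqrt\delta)\to0$ to $J$ even where the cosine is negative. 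This yields $\sup_{\bar\Gamma\times\bar\Gamma}J=\sup_{\mathcal D^*}K$, which is \eqref{quotien_distance_BV}; note that only absolute continuity of $c_1,c_2$ is used here.

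The existence statements then reduce to the attainment of $\sup_{\mathcal D^*}K$, which I would obtain from the compactness theory of Trouv\'e and Younes \cite[Section 5.2]{article}. For a maximizing sequence $\phi_n\in\mathcal D^*$, Helly's selection theorem yields a pointwise limit $\phi_\infty$, which after left-continuous normalization lies in $\mathcal D^*$; since $f_{a,b}\ge0$ and, for $c_1,c_2\in PC^1$, the weight $f_{a,b}$ is bounded and continuous off finitely many horizontal and vertical lines, the functional $\phi\mapsto\int\sqrt{\dot\phi}\,f_{a,b}(\cdot,\phi(\cdot))$ is upper semicontinuous along such convergence: the concavity of $t\mapsto\sqrt t$ (whose recession function vanishes) prevents the absolutely continuous part of the derivative from losing mass to singular parts in the limit, while $f_{a,b}(x,\phi_n(x))\to f_{a,b}(x,\phi_\infty(x))$ for a.e.\ $x$. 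Hence $\sup_{\mathcal D^*}K=K(\phi^\ast)$ for some $\phi^\ast\in\mathcal D^*$; this is precisely where the $PC^1$ hypothesis is used, and where it cannot be dropped in general.

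For the dichotomy: if $a<b$ then $\lambda<\tfrac12$, so $\lambda\alpha(x,y)\le\lambda\pi<\tfrac\pi2$ for all $x,y$; the second branch of \eqref{eq:f_ab} never occurs, and $f_{a,b}(x,y)\ge\cos(\lambda\pi)\sqrt{|\dot c_1(x)||\dot c_2(y)|}$ is bounded below by a positive constant. A maximizer $\phi^\ast$ can then have neither a jump nor a flat interval: replacing a jump of height $h$ by an affine rise over a short interval of length $\delta$, and renormalizing the domain, changes $K$ by $+c\sqrt{h\delta}-O(\delta)>0$, and inserting a small slope on a flat interval (compensated by a small redistribution of derivative mass) strictly increases $K$ as well. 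Hence $\phi^\ast$ is a strictly increasing continuous self-map of $I$, i.e.\ a homeomorphism realizing the infimum in \eqref{quotient-distance1}; if moreover $\dot c_1,\dot c_2$ are Lipschitz then $f_{a,b}$ is Lipschitz and nonvanishing and a bootstrap of the Euler--Lagrange equation satisfied by $\phi^\ast$ upgrades it to a $C^1$-diffeomorphism. If $a\ge b$, I would instead turn $\phi^\ast$ into a pair in $\bar\Gamma$: after redistributing its derivative mass over $\{f_{a,b}(\cdot,\phi^\ast(\cdot))=0\}$ into jumps as above, parameterize the completed graph of $\phi^\ast$ in $I\times I$ (vertical segments at jumps, horizontal segments on flat pieces) by $\tau\mapsto(\gamma_1(\tau),\gamma_2(\tau))\in\bar\Gamma\times\bar\Gamma$; the vertical and horizontal pieces contribute $0$ to $J$ since one of $\dot\gamma_1,\dot\gamma_2$ vanishes there, on the remaining set $J$ equals $K(\phi^\ast)$, and so $(\gamma_1,\gamma_2)$ attains the infimum in \eqref{quotient-distance2}.

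Finally, for the non-attainment claim when $a\ge b$, I need curves $c_1,c_2\in AC(I,\R^d)$ for which $\sup_{\mathcal D^*}K$ is not attained; by the inequality $J(\gamma_1,\gamma_2)\le K(\gamma_2\circ\gamma_1^-)$ established above and the reformulation of the first paragraph, this forces non-attainment in $\bar\Gamma$, hence in $\Gamma$ and in $\mathcal D^*$. Here $\lambda\ge\tfrac12$, so there is a threshold angle $\alpha^\ast=\pi/(2\lambda)\le\pi$ past which $f_{a,b}=0$, and for merely absolutely continuous curves $f_{a,b}$ is only measurable, so the upper semicontinuity of the previous paragraph can genuinely fail. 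I would construct $c_1,c_2$ whose velocity directions oscillate on a scale refining towards an endpoint, with the favorable direction pairs arranged in two interleaved families of thin bands, so that any maximizing sequence for $K$ must alternate between the two families at ever finer scales while its Helly limit is forced either to have strictly smaller $K$ or to concentrate derivative mass at the left endpoint---which is impossible in $\mathcal D^*$, since $\phi(0^+)=0$ for every $\phi\in\mathcal D^*$. This is the analogue of the known breakdown of optimal reparametrizations for low-regularity curves in the SRV setting (cf.\ \cite{bruveris2016optimal,lahiri2015precise}). The hard part of the whole argument, in my view, is twofold: executing the $\mathcal D^*\leftrightarrow\bar\Gamma$ dictionary of the first paragraph with full measure-theoretic rigor (generalized inverses, the effect of redistributing $\dot\phi$, the $O(\sqrt\delta)$ estimates), and producing the oscillating example together with a proof that no element of $\mathcal D^*$ achieves the supremum.
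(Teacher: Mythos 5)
Your overall architecture matches the paper's proof quite closely: you start from the completion distance formula of Corollary~\ref{thm:completion}, truncate the signed weight to its nonnegative part $f_{a,b}$ by inserting flat pieces where the cosine is negative, set up the dictionary between pairs $(\gamma_1,\gamma_2)\in\bar\Gamma\times\bar\Gamma$ and single functions $\phi\in\mathcal D^*$ via the generalized inverse $\gamma_2\circ\gamma_1^-$ and the completed graph, and then invoke the Trouv\'e--Younes compactness theory for existence. The paper does exactly this, except that it cites the Trouv\'e--Younes results (their Theorem~3.1, Proposition~5.1 and Theorem~3.3) as black boxes where you sketch re-derivations: your Helly/upper-semicontinuity argument for attainment, your perturbation argument (replace a jump of height $h$ by an affine rise over length $\delta$, gaining $c\sqrt{h\delta}-O(\delta)$) for strict monotonicity when $f_{a,b}$ is bounded below, and your Euler--Lagrange bootstrap for the $C^1$ claim all play the roles of the cited hypotheses (H1), (H2) and the local regularity theorem. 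These sketches are plausible but each would require real work to make rigorous; the paper's route of citing \cite{article} is cleaner and is the intended one.

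The genuine gap is the non-attainment claim for $a\geq b$. The paper's counterexample is concrete: it adapts \cite[Section~6]{bruveris2016optimal}, choosing three velocity directions $v_1(t)$, $v_2$, $v_3$ whose pairwise angles all exceed the threshold (so $f_{a,b}\leq 0$ on the relevant pairs) and defining $\dot c_1,\dot c_2$ to switch between them on a fat Cantor set $B$ with $\lambda(B)=\tfrac12$; the supremum then equals $\lambda(I\setminus B)$ and is shown not to be attained by the argument of Bruveris. Your proposed construction --- oscillations refining toward an endpoint forcing maximizing sequences to concentrate derivative mass at $0$ --- is a different idea, is left entirely unexecuted (you yourself flag it as ``the hard part''), and rests on a shaky claim: concentration of derivative mass at the left endpoint is \emph{not} impossible in $\mathcal D^*$, since $\phi(s)=\mu([0,s[)$ permits $\mu(\{0\})>0$, i.e.\ a jump immediately at the origin, so the obstruction you describe does not obviously exist. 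Without either executing your oscillation construction or falling back on the fat-Cantor-set example, item~2 of the theorem is not proved. A secondary, smaller point: your lower bound $f_{a,b}\geq\cos(\lambda\pi)\sqrt{|\dot c_1||\dot c_2|}>c>0$ for $a<b$ implicitly requires $|\dot c_i|$ bounded away from zero; the paper makes the same implicit regularity assumption, so this is not a defect relative to the paper, but it is worth making explicit.
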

\noindent
Our proof of this result, which makes repeated use of results by Trouve and Younes~\cite{article} is presented in \ref{appendix:existenceopen}.

\begin{remark}[Open questions]
This result suggests the following questions, which remain open for future research.
\begin{itemize}
    \item {\bf Counterexample for} $a<b$: The proof of the non-existence result for $a\geq b$ will be based on constructing curves $c_1,c_2$ such that $f_{a,b}(x,x)\leq 0$ for $x\in B$, where $B$ is a closed but nowhere dense subset. For $a<b$,  $f_{a,b}$ is positive and thus the same strategy fails. 
    \item {\bf Higher regularity:} One would hope that a higher regularity of the  curves $c_1$ and $c_2$ would lead to a higher regularity of the obtained optimal reparametrization functions. To deduce this result  from the theorem of Trouve and Younes~\cite{article}  one would need that a higher regularity of the curves $c_i$ also leads to a higher regularity of the function $f_{a,b}$. This function is, however, at best Lipschitz continuous and only locally of a higher regularity. One can use the local regularity of $f_{a,b}$ and localize the arguments of~\cite{article}to show that the optimal reparametrization functions are locally of class $C^{k-1}$ provided that the curves are of class $C^k$, but as of now we do not know how one could go a step further and obtain a global regularity result.
\end{itemize}
\end{remark}
\begin{remark}
In the limit (and degenerate) case $a=0$, one can further show that for any regular curves $c_1,c_2\in PC^1(I,\mathbb R^d)$ the infimum in \eqref{quotient-distance1} is attained by the constant speed reparametrizations of $c_1$ and $c_2$, i.e.:
$$\operatorname{dist}^{\mathcal S}_{a,b}([c_1],[c_2])=\operatorname{dist}_{a,b}(c_1\circ\psi_{c_1}^{-1}\circ \psi_{c_2},c_2)$$
where $\psi_c(u)=\int_0^u|\dot{c}(s)|\dd s.$ 
Indeed, let us first assume that $c_1$ and $c_2$ are both constant speed parametrized, i.e. $|\dot{c}_1|$ and $|\dot{c}_2|$ are constant on $I$ and equal to the curve lengths $\ell_{c_1}$ and $\ell_{c_2}$, respectively. Therefore $f_{a,b}$ is just constant, and the optimization problem becomes:
\begin{align}\label{quotien_distance_BV_a=0}
\operatorname{dist}^{\mathcal S}_{a,b}([c_1],[c_2])&=2b
\sqrt{\ell_{c_1}+\ell_{c_2}-2f_{a,b}\left(0,0\right)\sup_{\phi\in \mathcal D^*}\int_I\sqrt{\dot\phi(x)}  d x}.
\end{align}
We have by Cauchy-Schwarz
$$
\left(\int_I\sqrt{\dot\phi(x)}  d x\right)^2\leq 1\cdot\int_I\dot\phi(x) d x \leq 1\cdot1=1
$$
and for $\phi=\id$, the previous inequality is an equality. The result follows and we also obtain that the (pseudo-)distance is given by:
$$
\operatorname{dist}^{\mathcal S}_{a,b}([c_1],[c_2])=
\sqrt{\ell_{c_1}+\ell_{c_2}-2\sqrt{\ell_{c_1}\ell_{c_2}}} = \left|\sqrt{\ell_{c_1}} - \sqrt{\ell_{c_2}}\right|
$$
In the general case where $c_1,c_2\in PC^1(I,\mathbb R^d)$, one has that $c_1\circ\psi_{c_1}^{-1}$ and $c_2\circ\psi_{c_2}^{-1}$ have constant speed, and 
$$
\operatorname{dist}^{\mathcal S}_{a,b}([c_1],[c_2])  =\operatorname{dist}_{a,b}(c_1\circ\psi_{c_1}^{-1}\circ \psi_{c_2},c_2). 
$$
\end{remark}

\subsection{Existence of optimal reparametrizations on the space of closed curves}
We will now extend the previous result to the case of closed curves, i.e. when $D=S^1$. For closed  parametrized curves, there does not exist anymore an explicit formula for the geodesic distance associated to general elastic $G^{a,b}$-metrics---to our knowledge, the only $G^{a,b}$ metric with explicit geodesics for closed curves is the $a=b$ case for plane curves~\cite{younes2008metric} and space curves~\cite{needham2019shape}, both of which rely on specific constructions involving Hopf maps. Nevertheless, the formula we obtained for open curves in Theorem~\ref{genSRVT} still defines a reparametrization invariant distance function on the space of closed, parametrized curves: the next result follows by the same analysis applied in the open curve setting.

\begin{cor}
For $c_1,c_2\in AC_0(S^1,\mathbb R^d)$ let 
$\operatorname{dist}_{a,b}(c_1,c_2)$ be given by the same formula as in~\eqref{eq:dist} with integration over $I$ replaced by integration over $S^1$. Then  $\operatorname{dist}_{a,b}$
defines a metric on  the space of closed, absolutely continuous curves $AC_0(S^1,\mathbb R^d)$. 
\end{cor}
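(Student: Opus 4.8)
The plan is to realize the right-hand side of \eqref{eq:dist} (with $I$ replaced by $S^1$) as an $L^2$-type distance obtained by transporting, through the square root velocity transform $R$ of \eqref{eqn:SRVT}, a pointwise distance on $\mathbb R^d$ that is itself a genuine metric; the three metric axioms for $\operatorname{dist}_{a,b}$ then follow from the corresponding properties of that pointwise metric, together with Minkowski's inequality on $L^2(S^1)$ and the injectivity of $R$ on $AC_0(S^1,\mathbb R^d)$. Concretely, writing $\lambda=\tfrac{a}{2b}$, I would introduce on $\mathbb R^d$ the function $\rho(q_1,q_2)^2=|q_1|^2+|q_2|^2-2|q_1||q_2|\cos(\min(\lambda\,\angle(q_1,q_2),\pi))$, with $\angle(q_1,q_2)=\cos^{-1}(q_1\cdot q_2/(|q_1||q_2|))$ and the convention $\rho(0,q)=|q|$ (the cosine term being immaterial when $|q_1||q_2|=0$). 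The first observation is that $\rho$ is exactly the Euclidean cone distance over the rescaled round sphere $(S^{d-1},\lambda\, d_{\mathrm{round}})$, equivalently the distance on the metric completion of $(\mathbb R^d\setminus\{0\},g^{\lambda})$ that already underlies Theorem~\ref{genSRVT}; from this description $\rho$ is continuous on $\mathbb R^d\times\mathbb R^d$, obeys $\rho(q_1,q_2)\le|q_1|+|q_2|$, and is a metric on $\mathbb R^d$ — nonnegativity, symmetry and $\rho(q_1,q_2)=0\iff q_1=q_2$ are elementary, while its triangle inequality is the classical fact that a Euclidean cone over a metric space is again a metric space.

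Next I would check the identity $\operatorname{dist}_{a,b}(c_1,c_2)^2=4b^2\int_{S^1}\rho(R(c_1)(u),R(c_2)(u))^2\,du$ for $c_1,c_2\in AC_0(S^1,\mathbb R^d)$, where $R$ is extended to $AC_0(S^1,\mathbb R^d)$ as in Corollary~\ref{thm:completion} (with $R(c)(u)=0$ wherever $c'(u)=0$), simply by matching terms: since $|R(c_i)(u)|^2=|c_i'(u)|$ one has $\int_{S^1}|R(c_i)|^2\,du=\ell_{c_i}<\infty$; where both derivatives are nonzero, $\angle(R(c_1)(u),R(c_2)(u))=\cos^{-1}(c_1'(u)\cdot c_2'(u)/(|c_1'(u)||c_2'(u)|))$ and $\min(\lambda\,\angle,\pi)=\lambda\min(\angle,\tfrac{2b\pi}{a})=\lambda\theta$ because $\lambda\cdot\tfrac{2b\pi}{a}=\pi$; and at parameters where $c_1'(u)$ or $c_2'(u)$ vanishes, both integrands collapse to $|c_2'(u)|$, resp.\ $|c_1'(u)|$, consistent with the convention of Corollary~\ref{thm:completion}. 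I would also record that $u\mapsto\rho(R(c_1)(u),R(c_2)(u))$ is measurable (composition of the continuous map $\rho$ with the measurable pair $(R(c_1),R(c_2))$) and lies in $L^2(S^1)$, being dominated by $|R(c_1)|+|R(c_2)|\in L^2(S^1)$, so the integral is finite. (Here ``the same formula as in \eqref{eq:dist}'' is read, when $a>2b$, with the truncated angle $\theta$ of Corollary~\ref{thm:completion}; for $a\le 2b$ the truncation is vacuous.)

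With this identity in hand, the metric axioms for $\operatorname{dist}_{a,b}$ are immediate. Symmetry and nonnegativity are clear. If $\operatorname{dist}_{a,b}(c_1,c_2)=0$ then $\rho(R(c_1)(u),R(c_2)(u))=0$ for a.e.\ $u$, hence $R(c_1)=R(c_2)$ a.e.; since each $c_i$ is recovered from $R(c_i)$ by integrating $c_i'=R(c_i)\,|R(c_i)|$ from the basepoint (at which $c_i$ vanishes), this forces $c_1=c_2$, i.e.\ $R$ is injective on $AC_0(S^1,\mathbb R^d)$. For the triangle inequality, I would apply the pointwise triangle inequality for $\rho$ along $R(c_1),R(c_2),R(c_3)$ for a.e.\ $u$, take $L^2(S^1)$-norms, and invoke Minkowski's inequality to conclude $\operatorname{dist}_{a,b}(c_1,c_3)\le\operatorname{dist}_{a,b}(c_1,c_2)+\operatorname{dist}_{a,b}(c_2,c_3)$.

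The only substantive ingredient is the triangle inequality for $\rho$ on all of $\mathbb R^d$ — in particular near and ``across'' the origin, and, in the regime $a>2b$, precisely where the angular cap at $\pi$ is active and $(\mathbb R^d\setminus\{0\},g^\lambda)$ admits no global Euclidean model — but this is exactly the metric-cone statement already built into the open-curve analysis of Theorem~\ref{genSRVT} and Corollary~\ref{thm:completion}. Since every remaining step is insensitive to whether the domain is $I$ or $S^1$, no genuinely new argument beyond that case is required, which is why the corollary ``follows by the same analysis''.
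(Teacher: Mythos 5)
Your proposal is correct and follows essentially the same route the paper intends when it says the result ``follows by the same analysis applied in the open curve setting'': the pointwise cone distance $\rho$ is exactly the metric $d_\lambda$ of Lemma~\ref{geod_dist_glambda}, the $L^2$ identity via $R$ is the content of Lemma~\ref{lem:complL2} and Corollary~\ref{thm:completion} with $I$ replaced by $S^1$, and the metric axioms then follow from Minkowski's inequality and the injectivity of $R$ on based curves. Your write-up merely makes explicit the details the paper leaves implicit (including the observation that nothing in this chain depends on the domain being $I$ rather than $S^1$).
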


As previously, this allows us to construct a distance on the quotient space
of unparametrized closed curves by defining
\begin{equation}
\operatorname{dist}^{\mathcal S,\operatorname{cl}}_{a,b}([c_1],[c_2]) :=\inf_{\gamma\in \Gamma_{\operatorname{cl}}} \operatorname{dist}_{a,b}(c_1,c_2\circ\gamma),
\end{equation}
where $\operatorname{dist}_{a,b}$ is given by \eqref{eq:dist} and where $\Gamma_{\operatorname{cl}}$ denotes the group of absolutely continuous reparametrizations on the circle, i.e.,
\begin{align*}
\Gamma_{\operatorname{cl}}=\{\gamma \in AC(S^1,S^1):  \gamma \text{ is bijective and }\gamma'>0\quad a.e. \} 
\end{align*}
We introduce the shift operator on $S^1$ :
\begin{align*}
    S_{\tau} : \left\{\begin{array}{ccc}
        S^1 & \rightarrow & S^1  \\
        \theta & \mapsto & \theta + \tau 
    \end{array}\right.
\end{align*}
where $\tau \in S^1$.
Then we can rewrite the group of absolutely continous reparametrizations on the circle as 
\begin{align*}
\Gamma_{\operatorname{cl}}=\{S_{\tau}\circ\gamma: \gamma \in \Gamma \text{ and } \tau \in S^1 \}, 
\end{align*}
and we also need, as before, the closure of this group $\bar{\Gamma}_{\operatorname{cl}}=\{S_{\tau}\circ\gamma: \gamma \in \bar\Gamma \text{ and } \tau \in S^1 \}$. Then, the induced distance $\operatorname{dist}^{\mathcal S,\operatorname{cl}}_{a,b}$ can be expressed as

\begin{align}
\operatorname{dist}^{\mathcal S,\operatorname{cl}}_{a,b}([c_1],[c_2])&=\inf_{\gamma\in \Gamma_{\operatorname{cl}}}\operatorname{dist}_{a,b}(c_1,c_2\circ \gamma)=
\inf_{\gamma_1,\gamma_2\in \Gamma_{\operatorname{cl}}}\operatorname{dist}_{a,b}(c_1\circ\gamma_1,c_2\circ \gamma_2) \nonumber \\
&=\inf_{\gamma_1,\gamma_2\in \bar \Gamma_{\operatorname{cl}}}\operatorname{dist}_{a,b}(c_1\circ\gamma_1,c_2\circ \gamma_2). \label{quotient-distance-cl}
\end{align}

We can now formulate the existence result of optimal reparametrizations for closed curves. Our proof, which will use the same method as~\cite{hartman2021supervised} where  the result was shown for the SRV metric, is postponed to \ref{appendix:closed}.  
\begin{theo}\label{thm:existenceclosed}
Let $c_1,c_2\in PC^1(S^1,\mathbb R^d)$.
We have the following statements concerning the existence of optimal reparametrizations:
\begin{enumerate}
    \item $\mathbf{a<b:}$ for any $c_1,c_2\in PC^1(S^1,\mathbb R^d)$ there exists a strictly increasing homeomorphism  such that the infimum in~\eqref{quotient-distance-cl} is attained. If the derivatives $\dot c_1, \dot c_2$ are Lipschitz continuous then $\gamma \in \operatorname{Diff}_{C^1}(S^1)$, i.e., the optimal reparametrization is a $C^1$-diffeomorphism.. 
    \item $\mathbf{a\geq b:}$ for  $c_1,c_2\in PC^1(S^1,\mathbb R^d)$ there exists a pair of generalized reparametrization $\gamma_1,\gamma_2\in \bar \Gamma_{\operatorname{cl}}$ such that the infimum in~\eqref{quotient-distance-cl} is attained.
\end{enumerate}
\end{theo}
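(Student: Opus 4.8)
The plan is to reduce the closed-curve problem to the open-curve result of Theorem~\ref{thm:existence} by an additional optimization over the circular shift, following the strategy used in \cite{hartman2021supervised} for the SRV metric. Recall $\Gamma_{\operatorname{cl}}=\{S_\tau\circ\gamma:\tau\in S^1,\ \gamma\in\Gamma\}$. For $\tau\in S^1$, write $c_2^\tau$ for the curve $c_2\circ S_\tau$ regarded as an open curve on $I$ by cutting $S^1$ at $0$, and $c_1^0$ for $c_1$ cut at $0$; since $c_1,c_2\in PC^1(S^1,\mathbb R^d)$, both $c_1^0$ and $c_2^\tau$ lie in $PC^1(I,\mathbb R^d)$. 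Because the closed-curve distance in \eqref{eq:dist} is an integral over $S^1$ that is insensitive to the choice of cut point, reparametrization invariance gives
\[
\operatorname{dist}^{\mathcal S,\operatorname{cl}}_{a,b}([c_1],[c_2])=\inf_{\tau\in S^1}\ \inf_{\gamma\in\Gamma}\operatorname{dist}_{a,b}(c_1^0,c_2^\tau\circ\gamma)=\inf_{\tau\in S^1}\operatorname{dist}^{\mathcal S}_{a,b}([c_1^0],[c_2^\tau]),
\]
so it suffices to show that the infimum over $\tau$ is attained, and then to invoke Theorem~\ref{thm:existence} at the optimal shift $\tau^*$.

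The crux is to prove that the value function $V(\tau):=\operatorname{dist}^{\mathcal S}_{a,b}([c_1^0],[c_2^\tau])$ is continuous on the compact manifold $S^1$ (lower semicontinuity of $V$ would already suffice, but the argument gives continuity), so that $\inf_\tau V(\tau)$ is attained. By the representation \eqref{quotien_distance_BV}, this amounts to showing that
\[
\mathcal F(\tau,\phi):=\int_I\sqrt{\dot\phi(x)}\,f_{a,b}^\tau\big(x,\phi(x)\big)\,dx ,
\]
with $f_{a,b}^\tau$ the (bounded) integrand \eqref{eq:f_ab} built from $c_1$ and $c_2\circ S_\tau$, depends on $\tau$ continuously \emph{uniformly over} $\phi\in\mathcal D^*$. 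This is the delicate point, and the main obstacle: $f_{a,b}^\tau$ is assembled from $\dot c_1$ and the translate $\dot c_2(\cdot+\tau)$, whose finitely many jumps move with $\tau$, so $f_{a,b}^\tau$ is only $L^1$-continuous in $\tau$, not uniformly. The resolution is that on the set $A$ of parameters $x$ for which $\phi(x)$ falls within $\varepsilon$ of a jump of $\dot c_2$, the weight $\sqrt{\dot\phi}$ is small in $L^1$: by Cauchy--Schwarz $\int_A\sqrt{\dot\phi}\le\big(\int_A\dot\phi\big)^{1/2}$ and, since $\phi$ is nondecreasing, $\int_A\dot\phi\le|\phi(A)|\le Ck\varepsilon$ where $k$ is the number of jumps, while $\int_I\sqrt{\dot\phi}\le1$; off $A$, the translate $\dot c_2(\cdot+\tau)$ is uniformly continuous and a modulus-of-continuity estimate applies. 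Together these give $\sup_{\phi\in\mathcal D^*}|\mathcal F(\tau,\phi)-\mathcal F(\tau^*,\phi)|\to0$ as $\tau\to\tau^*$. Equivalently, one could argue directly with a minimizing sequence $(\tau_n,\phi_n)$, extracting $\tau_n\to\tau^*$ by compactness of $S^1$ and $\phi_n\to\phi^*$ by Helly's selection theorem on $\mathcal D^*$, and then establishing joint upper semicontinuity of $\mathcal F$ via the same estimate.

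It remains to assemble the optimizers by applying Theorem~\ref{thm:existence} to the $PC^1$ pair $(c_1^0,c_2^{\tau^*})$. If $a<b$, this yields a strictly increasing homeomorphism $\gamma_0$ of $I$ realizing $\operatorname{dist}^{\mathcal S}_{a,b}([c_1^0],[c_2^{\tau^*}])$, and then $\gamma:=S_{\tau^*}\circ\gamma_0$ is a strictly increasing homeomorphism of $S^1$ realizing the infimum in \eqref{quotient-distance-cl}; when $\dot c_1,\dot c_2$ are Lipschitz, choosing the cut point of $c_1$ away from the finitely many breakpoints of $c_1$ and $c_2$ lets the regularity part of Theorem~\ref{thm:existence} (an Euler--Lagrange argument) apply across the cut, so that $\gamma\in\operatorname{Diff}_{C^1}(S^1)$. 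If $a\geq b$, Theorem~\ref{thm:existence} yields $\gamma_1,\gamma_2\in\bar\Gamma$ realizing the open-curve distance, whence $(\gamma_1,\,S_{\tau^*}\circ\gamma_2)\in\bar\Gamma_{\operatorname{cl}}\times\bar\Gamma_{\operatorname{cl}}$ realizes the infimum in \eqref{quotient-distance-cl}. Apart from the continuity statement for $V$, every step is either shift-bookkeeping or a direct citation of Theorem~\ref{thm:existence}, which in turn rests on the compactness and semicontinuity machinery of Trouv\'e--Younes \cite{article}.
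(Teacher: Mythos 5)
Your proposal is correct and follows the same high-level architecture as the paper's proof: reduce to an optimization over the shift $\tau\in S^1$, show that the resulting value function is continuous so that compactness of $S^1$ yields an optimal $\tau^*$, and then invoke Theorem~\ref{thm:existence} for the open-curve pair at $\tau^*$. Where you genuinely diverge is in how continuity in $\tau$ is established. The paper never touches the functional $\mathcal F(\tau,\phi)$ directly: it applies the triangle inequality for the quotient distance to get $|F(\tau)-F(\tau')|\le \operatorname{dist}^{\mathcal S}_{a,b}([c_2\circ S_\tau],[c_2\circ S_{\tau'}])$, bounds this by $\max\left(\tfrac{a}{2b},1\right)\,\|Q(c_2\circ S_\tau)-Q(c_2\circ S_{\tau'})\|_{L^2}$ using $G^{L^2_{a/(2b)}}\le \max\left(\tfrac{a}{2b},1\right)^2 G^{L^2}$, and then concludes by continuity of translation in $L^2$ (via density of continuous functions and uniform continuity). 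This sidesteps entirely the issue of uniformity over $\phi\in\mathcal D^*$ and the moving discontinuities of $\dot c_2(\cdot+\tau)$. Your route instead proves $\sup_{\phi}|\mathcal F(\tau,\phi)-\mathcal F(\tau^*,\phi)|\to 0$ directly, controlling the exceptional set near the translated jumps by the Cauchy--Schwarz bound $\int_A\sqrt{\dot\phi}\le\bigl(\int_A\dot\phi\bigr)^{1/2}\le\bigl(\lambda(\phi(A))\bigr)^{1/2}$; this works (the integrand of $f_{a,b}$ extends continuously across the angle cutoff and at vanishing derivatives, so the off-$A$ part is handled by uniform continuity), but it is noticeably more delicate bookkeeping than the paper's two-line metric-space argument. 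One small point you share with the paper but state more explicitly: in the $a<b$ Lipschitz case, gluing the open-curve $C^1$ optimizer into a $C^1$ diffeomorphism of $S^1$ requires matching derivatives across the cut point; your remark about choosing the cut away from breakpoints gestures at this but does not fully resolve it (the paper is equally terse here, simply citing Theorem~\ref{thm:existence}).
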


\section{Algorithms for the computation of quotient distances and geodesics}\label{sec:algorithms}
In the following, we will describe two different algorithms for the numerical computation of the optimal reparametrization on the space of open curves: an exact algorithm based on the work of Lahiri, Robinson and Klassen~\cite{lahiri2015precise} and a faster dynamic programming based approximation. Solving the registration problem on the space of closed curves simply requires an additional optimization over the starting point, i.e., one has to solve the registration problem on the space of open curves for any choice of starting point. Consequently, the numerical solution on the space of closed curves is significantly more expensive.

\subsection{Exact algorithm for piecewise linear curves}
 By the results of the previous section, we obtain the existence of optimal reparametrizations in the case of open, piecewise linear curves. Furthermore, using a result of Lahiri et. al.~\cite{lahiri2015precise} we obtain an explicit algorithm for the optimal reparametrizations $\gamma_1$ and $\gamma_2$. We have the following result that follows directly from the corresponding analysis for the SRV-metric.
\begin{theo}\label{thm:explicit_algorithm}
Let $c_1,c_2$ be two piecewise linear curves with values in $\mathbb R^d$. Then the pair of generalized reparametrizations $\gamma_1,\gamma_2\in \bar \Gamma$  that attains the infimum in~\eqref{quotient-distance2} consists of two piecewise linear maps. 
\end{theo}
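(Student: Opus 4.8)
The plan is to reduce the statement to the scalar optimization over $\mathcal D^*$ that appears in Theorem~\ref{thm:existence}, to exploit the fact that, for piecewise linear curves, $f_{a,b}$ is piecewise constant on a finite grid, and then to transfer a piecewise linear maximizer back to a pair of reparametrizations; this is the sense in which the result ``follows directly from the corresponding analysis for the SRV-metric'' of Lahiri, Robinson and Klassen~\cite{lahiri2015precise}. Let $c_1,c_2$ be piecewise linear with breakpoints $0=s_0<\dots<s_m=1$ and $0=t_0<\dots<t_n=1$, respectively. On each open interval $(s_{i-1},s_i)$ the vector $\dot c_1$ is constant, and likewise $\dot c_2$ is constant on $(t_{j-1},t_j)$, so by \eqref{eq:f_ab} the function $f_{a,b}$ equals a constant $w_{ij}\geq 0$ on each open rectangle $R_{ij}=(s_{i-1},s_i)\times(t_{j-1},t_j)$. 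Since $c_1,c_2\in PC^1(I,\mathbb R^d)$, Theorem~\ref{thm:existence} ensures that the infimum in \eqref{quotient-distance2} is attained, and, via the equivalence with \eqref{quotien_distance_BV} asserted there, that the supremum $\sup_{\phi\in\mathcal D^*}\int_0^1\sqrt{\dot\phi(x)}\,f_{a,b}(x,\phi(x))\,dx$ is attained by some $\phi\in\mathcal D^*$ (a strictly increasing homeomorphism when $a<b$). Observe that, since $\sqrt{\dot\phi}=0$ a.e.\ on any interval where $\phi$ is constant and the grid lines $\{x=s_i\}$ form a null set, this integral is exactly $\sum_{i,j}w_{ij}\int_{A_{ij}}\sqrt{\dot\phi}\,dx$, where $A_{ij}:=\{x\in(s_{i-1},s_i):\phi(x)\in(t_{j-1},t_j)\}$.

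The core step is a ``straightening'' surgery on an optimal $\phi$. Because $\phi$ is nondecreasing, each $A_{ij}$ is a subinterval of $(s_{i-1},s_i)$, say $A_{ij}=(\alpha_{ij},\beta_{ij})$, and the $A_{ij}$ together with finitely many grid-crossing points exhaust $[0,1]$. On $A_{ij}$ the weight is the constant $w_{ij}\geq 0$, so by Cauchy--Schwarz $\int_{A_{ij}}\sqrt{\dot\phi}\leq\sqrt{(\beta_{ij}-\alpha_{ij})\int_{A_{ij}}\dot\phi}$ with equality iff $\phi$ is affine on $A_{ij}$, and moreover $\int_{A_{ij}}\dot\phi\leq\phi(\beta_{ij}^-)-\phi(\alpha_{ij}^+)$ with equality iff $\phi$ has no jump inside $A_{ij}$. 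Hence replacing $\phi$ on each $A_{ij}$ by the affine function interpolating $\phi(\alpha_{ij}^+)$ and $\phi(\beta_{ij}^-)$ does not decrease the objective; the pieces fit together into a globally defined function because adjacent pieces share a grid-crossing point, where either $\phi$ was continuous (and the two pieces match that common value) or $\phi$ had a jump (which we retain). The resulting maximizer $\tilde\phi$ is still nondecreasing and left-continuous, hence in $\mathcal D^*$, it has no jumps interior to any band, it is affine on each of a finite collection of at most order $mn$ intervals, and it has finitely many jumps; in particular $\tilde\phi$ is piecewise linear.

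Finally, I would transfer $\tilde\phi$ back to a pair of reparametrizations. Let $\Gamma_{\tilde\phi}\subset[0,1]^2$ be the monotone polygonal path from $(0,0)$ to $(1,1)$ obtained from the graph of $\tilde\phi$ by inserting a vertical segment at each jump, and parametrize it proportionally to Euclidean arc length by $\sigma\in[0,1]\mapsto(\gamma_1(\sigma),\gamma_2(\sigma))$. Then $\gamma_1,\gamma_2$ are nondecreasing, Lipschitz, and send $0\mapsto 0$, $1\mapsto 1$, so $\gamma_1,\gamma_2\in\bar\Gamma$; since $\Gamma_{\tilde\phi}$ is polygonal, both are piecewise linear, with intervals of constancy of $\gamma_1$ (resp.\ $\gamma_2$) corresponding exactly to jumps (resp.\ constancy intervals) of $\tilde\phi$, which is precisely the flexibility afforded by generalized reparametrizations in $\bar\Gamma$. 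That $(c_1\circ\gamma_1,c_2\circ\gamma_2)$ attains $\operatorname{dist}^{\mathcal S}_{a,b}([c_1],[c_2])$ follows from the identity between \eqref{quotient-distance2} and \eqref{quotien_distance_BV} used in the proof of Theorem~\ref{thm:existence}; when $a<b$, $\tilde\phi$ is a homeomorphism and this construction returns a single piecewise linear diffeomorphism.

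The main obstacle is bookkeeping rather than a new idea. One must carefully invoke (or reprove) the equivalence between the two-reparametrization formulation \eqref{quotient-distance2} and the scalar formulation \eqref{quotien_distance_BV}, tracking how jumps and constancy intervals of $\phi$ translate into subintervals on which $\gamma_1$ or $\gamma_2$ has vanishing derivative, and one must check that the rectangle-wise surgery is globally consistent and yields only finitely many linear pieces---this is exactly where piecewise linearity of $c_1,c_2$ (finiteness of the grid $\{s_i\}\times\{t_j\}$), as opposed to mere piecewise $C^1$-regularity, enters. Since each of these points mirrors the SRV analysis of~\cite{lahiri2015precise}, the write-up should amount to isolating the nonnegativity and piecewise-constancy properties of $f_{a,b}$ that make that analysis go through unchanged.
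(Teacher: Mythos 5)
Your proposal is correct and follows essentially the same route as the paper: both reduce to the optimization of $\int\sqrt{\dot\phi}\,f_{a,b}(x,\phi(x))\,dx$ (equivalently, the two-reparametrization functional), observe that piecewise linearity of $c_1,c_2$ makes $f_{a,b}$ \emph{rectangular} (constant on a finite grid of rectangles), and invoke the existence result of Theorem~\ref{thm:existence}; the only difference is that the paper then delegates the rest verbatim to the analysis of Lahiri, Robinson and Klassen, whereas you explicitly reprove its key step via the rectangle-wise Cauchy--Schwarz straightening and the transfer back through the arc-length parametrization of the polygonal graph. (One negligible imprecision: equality in $\int_{A_{ij}}\dot\phi\leq\phi(\beta_{ij}^-)-\phi(\alpha_{ij}^+)$ requires the absence of the entire singular part of $\phi$ on $A_{ij}$, not just of jumps, but this does not affect the direction of the inequality you actually use.)
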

\begin{proof}
First we note that piecewise linear curves, are piecewise smooth and thus in particular piecewise $C^1$.
This guarantees the existence of optimal reparametrization by the results of Theorem~\ref{thm:existence}.

Next, we introduce a notion from ~\cite{lahiri2015precise} and let $f: I\times I \to \R$. We call $f$ \emph{rectangular} if there exist partitions $0=i_0<i_1<\dots<i_m=1$ and $0=j_0<i_1<\dots<j_n=1$ such that $f$ is constant on each rectangle of the form $[i_{r-1},i_r]\times [j_{s-1},j_s]$. 

Using again Theorem~\ref{thm:existence} we have shown that finding optimal reparametrizations for the geodesic distance is equivalent to the optimization problem
\begin{equation}
\sup_{\gamma_1,\gamma_2\in \bar \Gamma} \int_I \sqrt{\dot{\gamma}_1(u)\dot{\gamma}_2(u)} f_{a,b}\left(\gamma_1(u),\gamma_2(u)\right) d u, 
\end{equation} 
where $f_{a,b}$ is defined in~\eqref{eq:f_ab}.
It is clear that the function $f_{a,b}$ is rectangular if the curves $c_1$ and $c_2$ are piecewise linear. From here, the proof given in~\cite{lahiri2015precise} goes through verbatim. 
\end{proof}

Consequently, the exact algorithm from \cite{lahiri2015precise} can be adapted to find optimal reparametrizations in our setting. This algorithm can find the optimal piecewise linear trajectory $\gamma_1,\gamma_2$ that maximizes $\int_{I \times I}W(\gamma_1(u),\gamma_2(u))\sqrt{\dot{\gamma}_1(u)\dot{\gamma}_2(u)}$ for general $W \in L^2(I\times I,\mathbb{R})$. For the $G^{a,b}$-metrics, we simply have $W=f_{a,b}$. We use the implementation from Martins Bruveris\footnote{https://github.com/martinsbruveris/libsrvf} that computes reparametrization for the usual $G^{1,1/2}$-metric, i.e. with $W(x,y)=q_1(x)\cdot q_2(y)$. Recall that closed curves require an additional optimization step to determine the starting point. For each vertex of the piecewise linear curve $c_1$, we simply compute the resulting distance with $c_2$ and choose the minimum.

\subsection{Dynamic programming approach}\label{sec:dynamic_programming}
The previous algorithm gives the exact optimal piecewise linear reparametrization, but is in practice very slow to compute. An alternative method is to use a dynamic programming scheme to estimate an approximation of the optimal reparametrization, similar to the approach proposed in \cite{trouve2000diffeomorphic,mio2007shape}.
\par
We define a discretization $\mathcal{I}=\{x_0,...,x_n\}$ of the intervall $[0,1]$ and restrict the search to piecewise linear functions on the grid $\mathcal{I}\times\mathcal{I}$. We denote by $PL_{\mathcal{I}}$ the set of piecewise linear increasing functions with vertices on $\mathcal{I}\times\mathcal{I}$. For $k<i$, $l<j$ and $\phi\in PL_{\mathcal{I}}$, we define the partial cost function
$$
E(k,l,\phi) = \int_{x_k}^{x_i}\sqrt{\dot{\phi}(x)} f_{a,b}(x,\phi(x)) d x.
$$
Let $L_{k,l,i,j}(x) = x_l + \frac{x_j-x_l}{x_i-x_k}(x-x_k)$ be the line between $(x_k,x_l)$ and $(x_i,x_j)$, and with a slight abuse of notation, we shall write $E(k,l,i,j)=E(k,l,L_{k,l,i,j})$ as the energy of the segment. By additivity, if $\phi\in PL_{\mathcal{I}}$ is defined by the segments $L_{k_1,l_1,k_2,l_2},L_{k_2,l_2,k_3,l_3},...,L_{k_{p-1},l_{p-1},k_p,l_p}$, the total energy of $\phi$ is
$$
E(\phi)=\sum_{n=1}^{p-1} E(k_n,l_n,k_{n+1},l_{n+1}).
$$
We thus have to find the sequence of nodes $(k_n,l_n)$ that minimizes the energy.
We define the partial value function to reach node $(i,j)$ by
\begin{equation}
   H(i,j) = \min_{k<i,l<j} E(k,l,i,j) + H(k,l)
\label{equH} 
\end{equation}
with $H(0,0)=0$; in other words, $H$ is defined recursively. Due to the specific additive form of $E$, if for all $k<i,l<j$, $H(k,l)$ is the minimal energy between $(0,0)$ and $(k,l)$, then by definition $H(i,j)$ is the minimal energy between $(0,0)$ and $(i,j)$. Consequently, the global minimal energy that we aim to find is given by $H(1,1)$.
\par
The algorithm proceeds in two steps. First, we compute the different values of $H$ on the grid $\mathcal{I}\times\mathcal{I}$ using equation (\ref{equH}). Then, we determine the optimal path by backtracking from vertex $(1,1)$: if $(x_i,x_j)$ is a vertex of the optimizer, we compute the node that joins $(x_i,x_j)$ by solving the problem:
$$
(\hat{k},\hat{l})=\underset{k<i,l<j}{\operatorname{argmin}} E(k,l,i,j) + H(k,l).
$$
To speed-up the computation of the value function and optimal reparametrization, a standard approach \cite{mio2007shape} is to restrict the search of the node that connects to $(i,j)$ to a smaller set than $\{k,l : k<i,l<j\}$. In our case, we define 
$$
N_{i,j}=\{k,l: i-6\leq k<i,j-6\leq l<j\}
$$
and the corresponding partial energy
$$
H(i,j)=\min_{(k,l)\in N_{i,j}} E(k,l,i,j) + H(k,l).
$$
This will lead to a restriction of the admissible slopes and in general to a less precise approximation of the reparametrization. Nevertheless, in all of our experiments, this restriction still yields good approximations of the true reparametrization functions, c.f. Figure~\ref{fig:optimal_reparameterizations}.

\subsection{Examples}\label{sec:examples}

Figure~\ref{fig:two_open_geodesics} shows several geodesics between pairs of plane curves---a pair of simple synthetic curves and a pair of real leaf shapes. Geodesics are computed for a variety of elastic metrics $G^{a,b}$; we fix $b = \frac{1}{2}$ and compute geodesics for $a \in \{0.1,0.5,1,5\}$. All geodesics in this figure were computed using our dynamic programming algorithm. These first examples clearly illustrate that the intermediate shapes along the geodesics strongly depend on the choice of metric parameters. Figure~\ref{fig:optimal_reparameterizations} compares the estimated reparametrization functions found via the dynamic programming algorithm to those found by the exact algorithm.  We see here that the dynamic programming algorithm typically returns registrations which are close to the true optimal ones, while incurring a lower numerical burden. Indeed, for the synthetic example, the average computational times for exact registrations were 306s, 269s, 40s, and 0.2s, respectively, for $a = 0.1, 0.5, 1, 5$; their counterparts computed using the dynamic programming algorithm were orders of magnitude smaller: 0.23s, 0.09s, 0.07s, and 0.06s. A similar trend held for the leaf shapes, where we got 299s, 253s, 34s, and 0.19s for the exact algorithm and 0.22s, 0.03s, 0.025s, and 0.021s for dynamic programming. In the figure, we report the performance of the dynamic programming algorithm by giving its relative error $(d_{\mathrm{dyn}} - d_{\mathrm{ex}})/d_{\mathrm{ex}}$, where $d_{\mathrm{dyn}}$ is geodesic distance (for given metric parameters) computed via dynamic programming and $d_{\mathrm{ex}}$ is the exact distance.

\begin{figure}
\centering
\includegraphics[width = \textwidth]{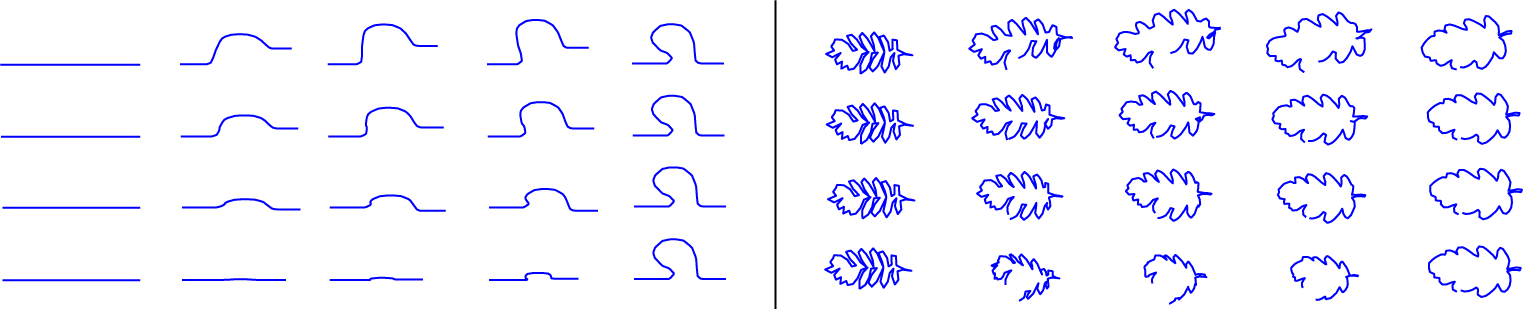}
\caption{Geodesics between open plane curves. Each panel shows several geodesics between a pair of curves, with each row corresponding to a different choice of parameters in the elastic metric. In the left panel, the source and target curves are simple synthetic curves, designed to clearly illustrate the dependence of the geodesic  on the metric parameters. In the right panel, the source and target curves are real leaf shapes---although the source and target curves are actually closed, we compute each geodesic in the space of open curves. In both the left and right panels, the metric parameters vary by row and are given by $G^{a,\frac{1}{2}}$, with $a = 0.1$,  $0.5$, $1$ and $5$, respectively.}
\label{fig:two_open_geodesics}
\end{figure}

\begin{figure}
\centering
\includegraphics[width = 0.99\textwidth]{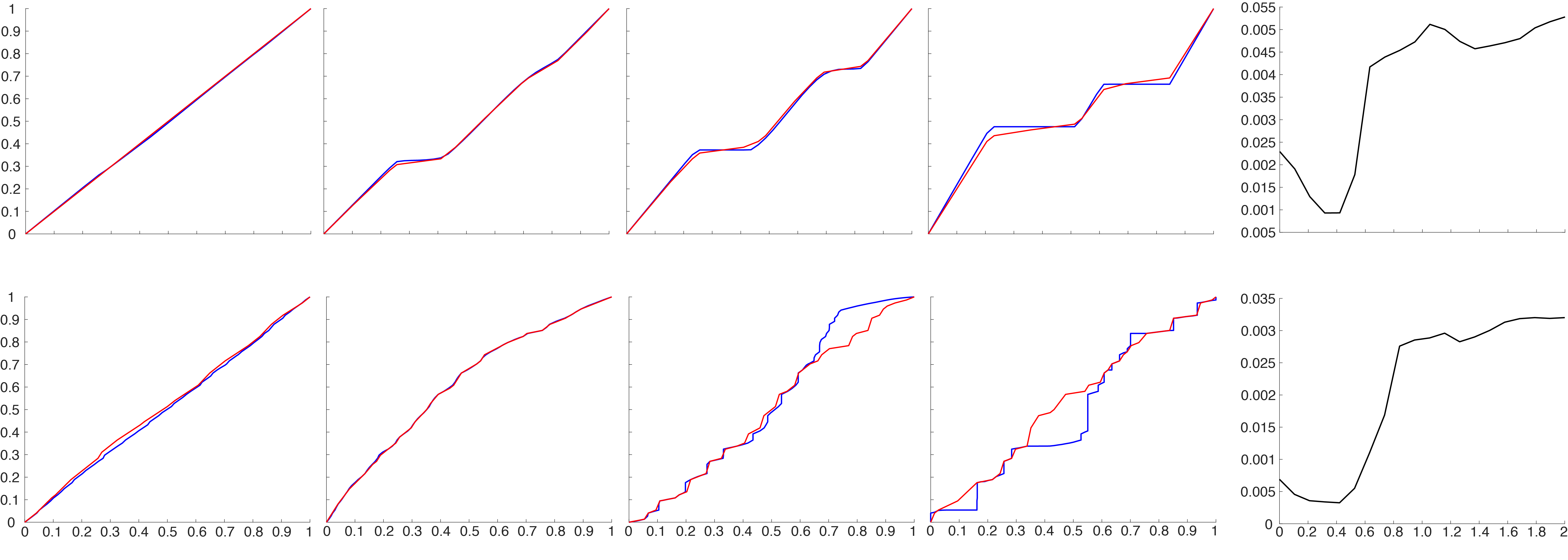}
\caption{Comparison of registrations via the exact and dynamic programming algorithms. The top row shows the registrations (i.e., optimal diffeomorphisms of $[0,1]$) between the synthetic curves from Figure~\ref{fig:two_open_geodesics} with respect to metric parameters $b=\frac{1}{2}$ and $a = 0.1,0.5,1,5$, respectively. In each figure, the exact registration is plotted in blue and the dynamic programming registration is plotted in red. The last graph in the row plots the relative error of dynamic programming versus the exact algorithm (see the text) for a finer range of parameters $a \in \{0, 0.1, 0.2, \ldots, 2\}$, $b = \frac{1}{2}$. The bottom row shows the same experiment for the registrations between the leaf shapes of Figure~\ref{fig:two_open_geodesics}. Observe that the relative error is generally on the order of a single digit percentage in each case.}
\label{fig:optimal_reparameterizations}
\end{figure}

Figure~\ref{fig:protein_geodesics} applies our framework to compute geodesics between 3D curves (protein backbones), for a variety of elastic metric parameters. These geodesics were computed, once again, using the dynamic programming algorithm. 

\begin{figure}
\centering
\includegraphics[width = 0.95\textwidth]{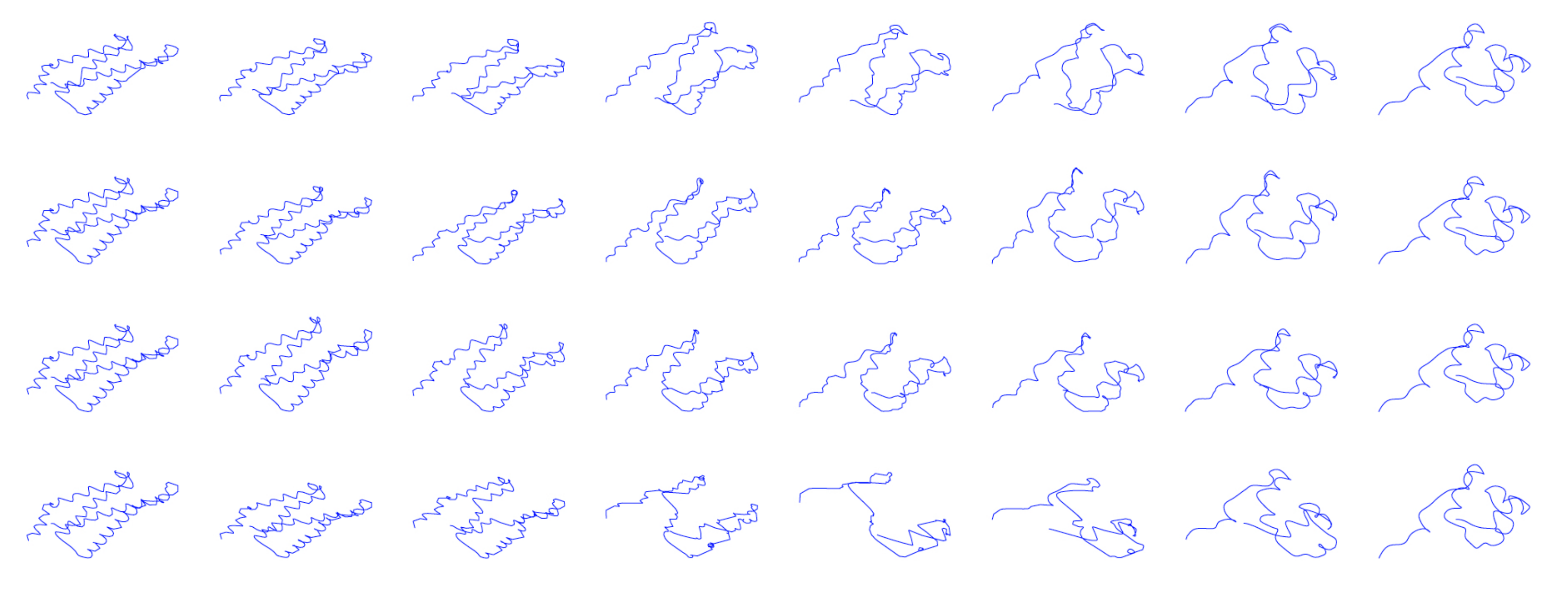}
\caption{Geodesics between space curves. Each row shows a geodesic between the same pair of protein backbones, modeled as space curves, for a particular choice of elastic metric. The elastic metric parameters for the rows are $b = \frac{1}{2}$, $a = 0.1,\,0.5,\,1,\,5$, respectively.}
\label{fig:protein_geodesics}
\end{figure}

\section{Metric learning}\label{sec:metric_learning}

The numerical experiments of the previous section illustrate both the influence and importance of the choice of metric when comparing and matching shapes. While certain heuristics may sometimes guide the selection of the parameters $a$ and $b$ of the elastic metric for a given dataset and application, it is often done via empirical trial and error approaches. Thus, in recent years, there has been a growing interest in developing methods for automatically estimating metrics on shape spaces. 

Obviously, there is a priori no natural criterion to prefer one metric over another for the basic task of matching two shapes. However, when considering shape datasets and problems such as clustering or classification, it should be expected that different metrics will lead to different ways of quantifying differences across samples and consequently different properties from a statistical perspective. This suggests the idea of attempting to optimize (or learn) the choice of metric in order to improve the statistical power of shape analysis methods. The elastic framework of this paper appears quite amenable to such a task as learning the metric here reduces to learning a single parameter (the ratio of $a$ and $b$). In this section, we consider the issue of metric learning for shape classification using two different models. Our primary focus is on demonstrating the feasibility and advantages of optimizing the choice of the metric in such a context; we leave an in depth study of the issue of developing efficient numerical algorithms for metric learning for future work.      

The literature on metric learning is vast, and we only summarize some main ideas here---for more details, there are several surveys on the topic, e.g., \cite{yang2006distance,bellet2013survey,kulis2013metric,kaya2019deep}. Generally, metric learning is a supervised machine learning technique for choosing a metric from a parametric family which optimally separates data coming from different classes. Classically, the data consists of Euclidean feature vectors, and the metrics under consideration are \emph{Mahalanobis distances}~\cite{xing2002distance,davis2007information,ying2012distance}, which allows training via standard techniques from convex optimization theory. Closer to the topic of this paper, there has also been recent interest in metric learning on parametric families of Riemannian metrics on manifolds, such as spaces of SPD matrices~\cite{vemulapalli2015riemannian}, spaces of histograms~\cite{le2015unsupervised} and graphs~\cite{heitz2021ground}.

A common paradigm in metric learning is to represent data via \emph{pairwise constraints}, where the training data consists of two sets $S = \{(x_i,x_j)\}$ and $D = \{(x_k,x_\ell)\}$ so that each pair $(x_i,x_j) \in S$ consists of \emph{similar points} (coming from the same class) and each pair $(x_k,x_\ell) \in D$ consists of \emph{dissimilar points} (coming from different classes). One then designs a loss function on the metric parameter space which encourages distances between similar points to be small and distances between dissimilar points to be large---the particulars of the loss function are application-dependent, and several choices are described in the survey papers cited above. An optimal metric with respect to a given loss can then be used for downstream distance-based analysis tasks, such as clustering, dimension reduction and $k$-nearest neighbors classification. This is the approach that we take in Section \ref{ssec:pairwise_constraints}, where we us the pairwise constraints method to train metrics for various 2-dimensional shape datasets. On the other hand, if one has a particular classification task in mind then it is sensible to learn the metric which optimizes performance on this task directly. We take this approach in Section \ref{ssec:shape_classification} to learn a metric which optimally classifies 3-dimensional protein backbone curves. 

\subsection{Pairwise Constraints}
\label{ssec:pairwise_constraints}
Let us first consider the goal of estimating, in a supervised fashion, the metric that will best separate 2-dimensional shapes according to the pairwise constraints paradigm described above. In other words, suppose that we have training data consisting of a collection of unparametrized curves $X = \{c_j\}_{j=1}^N$ together with known labels $y = \{y_j\}_{j=1}^N$ with each $y_j \in \{1,\ldots,K\}$ (that is, there are $K$ distinct classes). We seek to determine the optimal parameters $(a,b)$ for the elastic metric $G^{a,b}$ so that the geodesic distance $\operatorname{dist}_{a,b}^{\mathcal S}$ optimally separates those classes. By a simple normalization argument, we may fix one of the two metric parameters, which we shall do in the following by setting $b=1/2$ and optimize over $a\geq 0$.  

The above task can be stated formally by introducing an adequate pairwise constraint loss function depending on the distances $\operatorname{dist}^\mathcal{S}_{a,b}(c_i,c_j)$ between the curves of the training set. There have been various families of such loss functions appearing most notably in the machine learning literature. As a proof-of-concept, we choose here a simple loss which we can loosely think of as the ratio of the intra-class variance by the inter-class variance of the shape distances. Specifically, let $S \subset X \times X$ be the collection of ordered pairs $(c_i,c_j)$ of curves with the same label ($y_i=y_j$) and $D \subset X \times X$ the collection of ordered pairs with different labels ($y_i\neq y_j$). We define our loss function by:
\begin{equation}\label{eqn:loss_function_clustering}
L(a) =  \left(\frac{1}{|S|}\sum_{(c_i,c_j) \in S} \operatorname{dist}^\mathcal{S}_{a,1/2}(c_i,c_j)^{2}\right)^{\frac12} \cdot \left( \frac{1}{|D|} \sum_{(c_k,c_\ell) \in D} \operatorname{dist}^\mathcal{S}_{a,1/2}(c_k,c_\ell)^{2}\right)^{-\frac12}.
\end{equation}
 Minimizing $L(a)$ should be achieved when one strikes a balance between concentrating curves from the same class close to one another, while giving a large separation between different classes. As was recently observed in \cite{ghojogh2019fisher, ghojogh2022spectral}, minimizing this loss function is essentially equivalent to solving the metric learning optimization problem described in the pioneering work of Xing et al. \cite{xing2002distance}.

For a given value of $a$, this loss function is calculated by first computing each of the $N(N-1)/2$ pairwise distances $\{\operatorname{dist}^\mathcal{S}_{a,1/2}(c_i,c_j)\}$, which requires solving each of the corresponding registration problems. In our experiments, this is done using the dynamic programming scheme for the sake of computational efficiency. For the purpose of this work, we evaluate the loss function over a range of different values for $a$ in order to determine the approximate value of the minimizer. Note that, while the expression of the loss when reparametrizations are fixed is relatively simple and could be optimized easily with respect to $a$, the difficulty is that the optimal reparametrizations leading to each pairwise distance value also depend on $a$. This makes the derivation of more sophisticated and efficient schemes for the minimization of \eqref{eqn:loss_function_clustering} a non-trivial problem that we leave to future investigation, c.f. the discussion below.   

\vskip2ex

\textbf{Results.} We tested our metric learning pipeline on two shape datasets. Results from the first dataset are reported in Figure~\ref{fig:leaf_metric_learning}. Here, the data consisted of leaf shapes coming from four classes with 20 samples in each class. We chose 7 random examples from each class as training data and created pairwise data $S$ and $D$, with $S$ consisting of all pairs from the same class and $D$ consisting of all pairs from different classes. We then minimized the loss function \eqref{eqn:loss_function_clustering} via a grid search over the one-dimensional parameter space (a gradient descent algorithm was also implemented, which gave the same results). The efficacy of the learned metric was validated by testing the ability of the resulting geodesic distance to separate shapes from different classes. We measured this by computing the pairwise distance matrices for both the training and testing data, partitioning each dataset into four classes by applying complete linkage hierarchical clustering to the distance matrices and computing the Rand index of the inferred clusters against the ground truth classes. Similar results for the second dataset, consisting of shapes of four different species of animals, are reported in Figure~\ref{fig:bug_metric_learning}. Notably, the optimal metrics computed for the two datasets are quite different, indicating that the choice of optimal metric is data-dependent. 

\begin{figure}
\centering
\includegraphics[width = 0.9\textwidth]{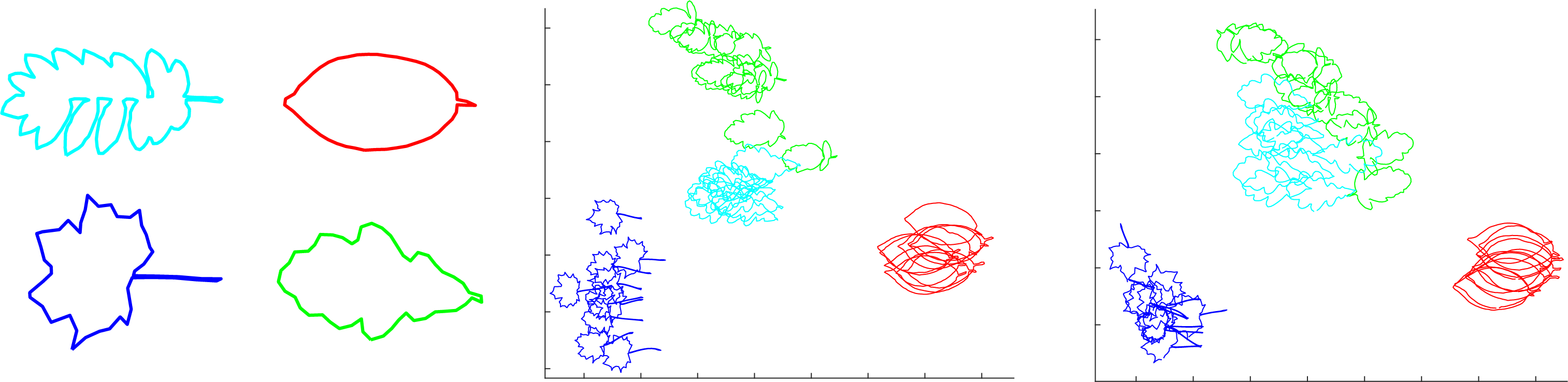}
\caption{Metric learning on the leaf dataset. The left panel shows representatives of each of the four classes in the leaf dataset. The middle and right panels show MDS (multidimensional scaling) plots of the pairwise distance matrices with respect to the learned metric for the training and testing data, respectively. The optimal metric parameter learned for the loss function \eqref{eqn:loss_function_clustering} is $a=0.71$. The Rand indices (see text for a full description) for the training and testing sets are 1 and 0.93, respectively.}
\label{fig:leaf_metric_learning}
\end{figure}

\begin{figure}
\centering
\includegraphics[width = 0.9\textwidth]{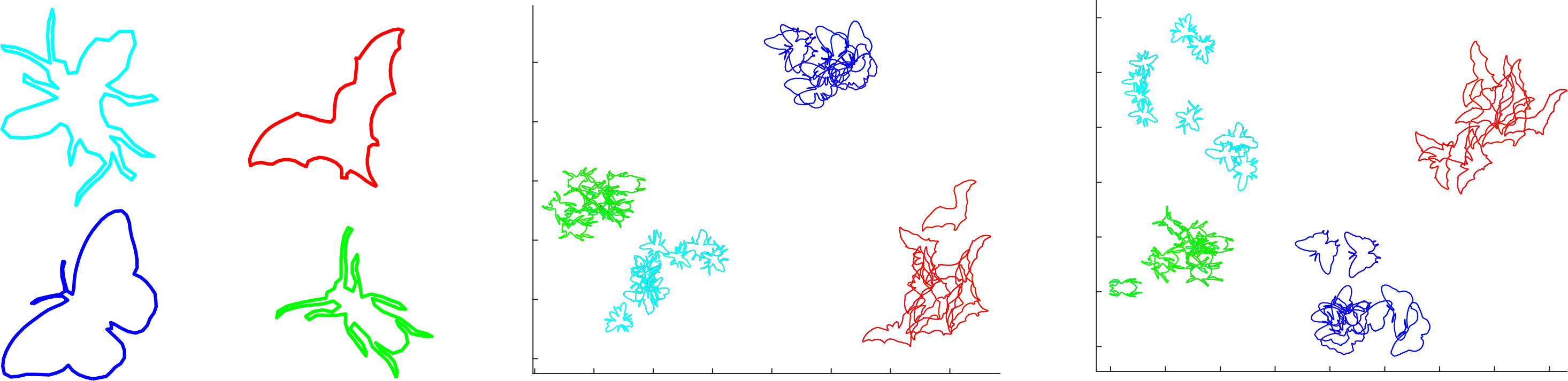}
\caption{Metric learning on the animal dataset. Similar to Figure~\ref{fig:bug_metric_learning}, from left to right the panels show representative shapes from each of the four classes, an MDS plot for the training data  and an MDS plot for the testing data, both with respect to the pairwise distance matrices computed with the optimal metric parameter. For this experiment, the optimal parameter was $a = 0.52$ and the Rand indices for the training and testing sets were both equal to $1$ (i.e., perfect clustering).}
\label{fig:bug_metric_learning}
\end{figure}

\subsection{Metric learning for shape classification}\label{ssec:shape_classification}
As an alternative to optimizing the metric parameters  with respect to the loss \eqref{eqn:loss_function_clustering}, one may instead consider trying to maximize classification scores on the training set in a cross-validation fashion. A simple approach could be, for example, to evaluate leave-one-out nearest neighbor classification scores for varying values of $a$. A usually more robust way however is to rather rely on the estimation of conditional probabilities for the different classes. We present one possible approach in the following description.  

With the same notation as in the previous section, we introduce a leave-one-out scheme in which for each $j=1,\ldots,N$, we denote by $p_j(\ell|\hat{X}_j)$ the probability of $c_j$ to be in the class $\ell \in\{1,\ldots,K\}$ knowing only the curves in $\hat{X}_j = \{c_j\}_{j\neq i}$ and their labels. In order to estimate the conditional probabilities, we adapt a standard approach in many machine learning works, see e.g. \cite{goodfellow2016deep}, Chapter 6.2. Namely, we set $p^{(j)}(\ell|\hat{X}_j) = \sigma(z^{(j)})_\ell$ where the vector $z^{(j)} \in \R^k$ is defined by
\begin{equation*}
    (z^{(j)})_\ell = -\frac{1}{|\{i\neq j: y_i=l\}|}\sum_{i\neq j, y_i=l} \operatorname{dist}^a(c_i,c_j)^2
\end{equation*}
and $\sigma$ is the softmax function given by
\begin{equation*}
    \sigma(z)_\ell = \frac{e^{\beta z_\ell}}{\sum_{m=1}^{K} e^{\beta z_\ell}}
\end{equation*}
with $\beta>0$ a fixed (or tunable) parameter. Then, for each $j=1,\ldots,N$, $p^{(j)}(y_j|\hat{X}_j)$ gives an estimate of the probability for the correct class $y_j$ of curve $j$. Thus, we seek to maximize the sum of the log-likelihoods over all instances of the leave-one-out scheme. In other words, we define the loss function to minimize to be:  
\begin{equation*}
    L(a) = -\sum_{j=1}^{N} \log(p^{(j)}(y_j|\hat{X}_j)).  
\end{equation*}
Note that $-\log(p^{(j)}(y_j|\hat{X}_j))$ can also be interpreted as the Kullback-Leibler divergence between the true probability distribution $\delta_{y_j}$ and the estimated one $p^{(j)}(\cdot|\hat{X}_j)$. As in the previous section, we can calculate the above loss for each value of the metric parameter $a$ by first solving all pairwise matching problems to obtain the set of distances $\{\operatorname{dist}^a(c_i,c_j)\}$.   

\vskip 2ex

\textbf{Results.}
We used data from the 3D Shape Retrieval Contest 2010 (SHREC’10) \cite{shrec} which contains a training dataset of 1000 protein structures from 100 classes, each class containing 10 proteins. Only the 3D curves representing the protein backbones were extracted and used in our experiments; two examples of such protein shapes were shown earlier in Figure \ref{fig:protein_geodesics}. In addition, 50 more proteins from random classes formed the testing dataset that we used to evaluate the metric learning process. Our goal is to compare results obtained with the above approach to the methods from~\cite{shrec}. We calculated the classification loss for a sample of values of the metric parameter $a$ between $0$ and $2$ (Figure \ref{fig:leave_one_out}) and found the optimal value to be $a=0.73$. 
\par

\begin{figure}
    \centering
    \begin{tabular}{cc}
         \includegraphics[width=0.35\textwidth]{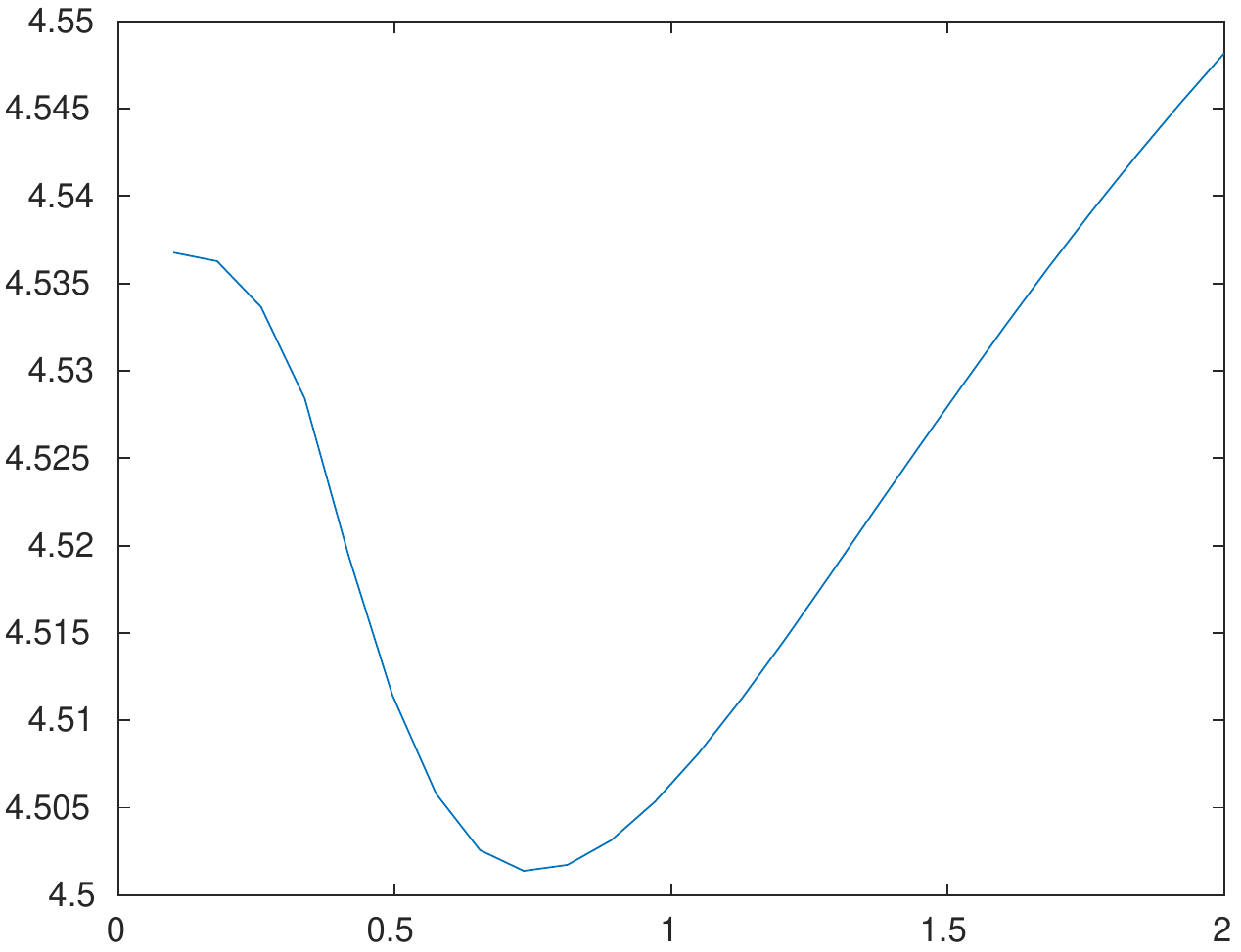}&  
         \includegraphics[width=0.35\textwidth]{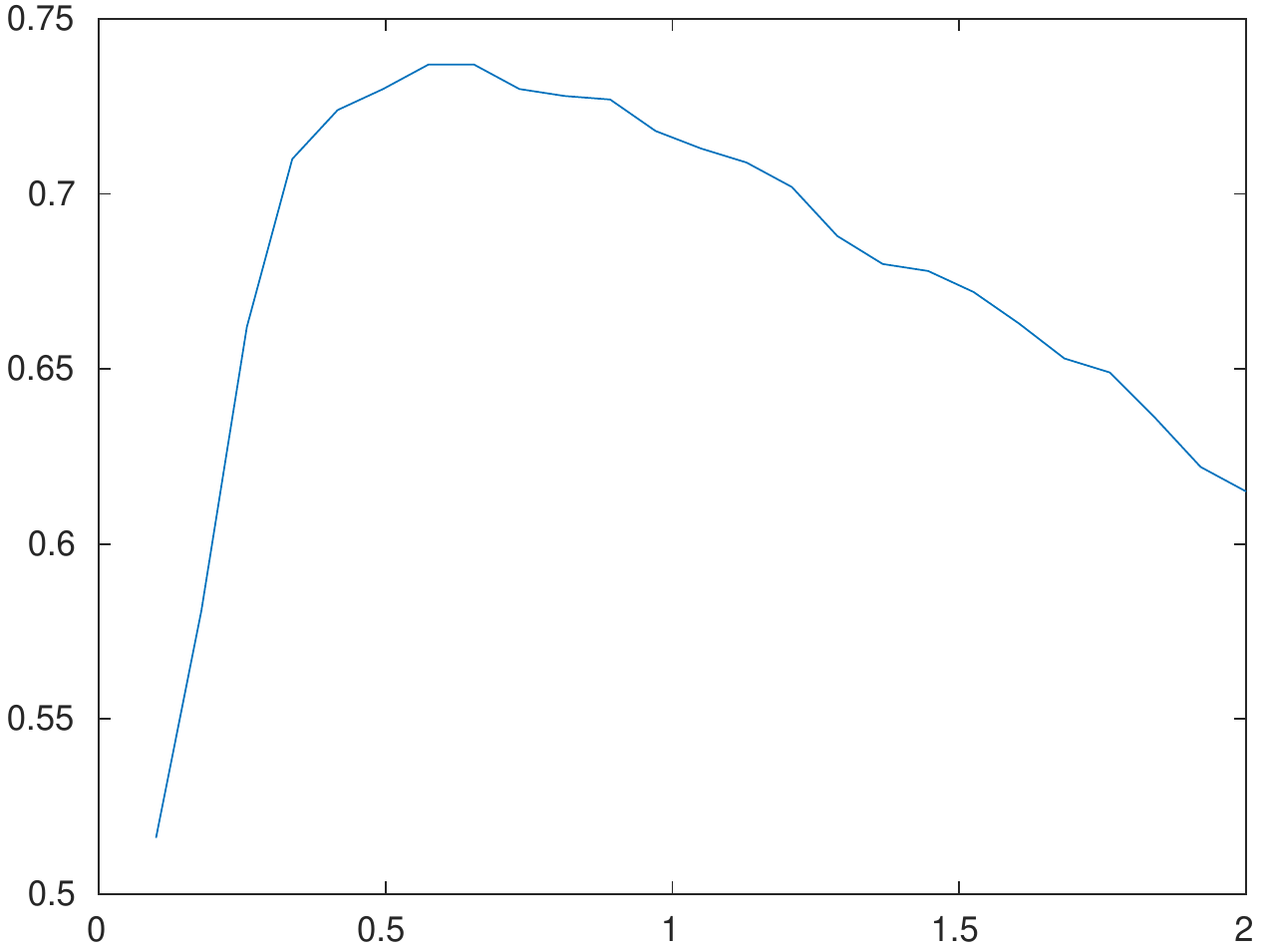}
    \end{tabular}
    \caption{Scores for different values of $a$. In the left panel we show the loss function described in Section \ref{ssec:shape_classification}. For comparison, on the right, we also evaluate the cross-validation leave-one-out accuracy based on nearest neighbor classification.}
    \label{fig:leave_one_out}
\end{figure}

For evaluation, we use this optimal parameter to compute a matrix of the distances between the 50 proteins in the testing set and each of the proteins in the training set. The performance of the method is measured in the same two ways as in the original contest.
\begin{itemize}
    \item Nearest neighbor: for each of the 50 proteins, we find the closest protein from the training dataset to predict the class of the testing protein. We calculate the overall percentage of correct predictions. For the optimal value of the metric parameter determined by our method -- $a=0.73$ -- we obtain $82\%$ of correct predictions, which beats all methods from \cite{shrec} (the best method in the paper reaches $80\%$). 
    
    \item Receiver operating characteristic (ROC) curve: for each of the 50 proteins in the testing dataset, we create a ranked list of the proteins from the training dataset, from the closest protein to the most distant. This ranked list contains 10 proteins in the actual class of the testing protein (the true positives) and 990 proteins in a different class (the true negatives). The ranked list is traversed sequentially and we plot the cumulative rate of true positives against the cumulative rate of true negatives. Figure \ref{fig:Roc_curves} shows aggregate ROC curves for all of the 50 test protein curves, for different parameters of the metric. For the optimal value of $a$, we also compute the area under the curve (AUC) for the ROC curves of each testing protein and plot it in the right panel of the figure.
\end{itemize}

\begin{figure}[H]
    \centering       \includegraphics[width=0.4\textwidth]{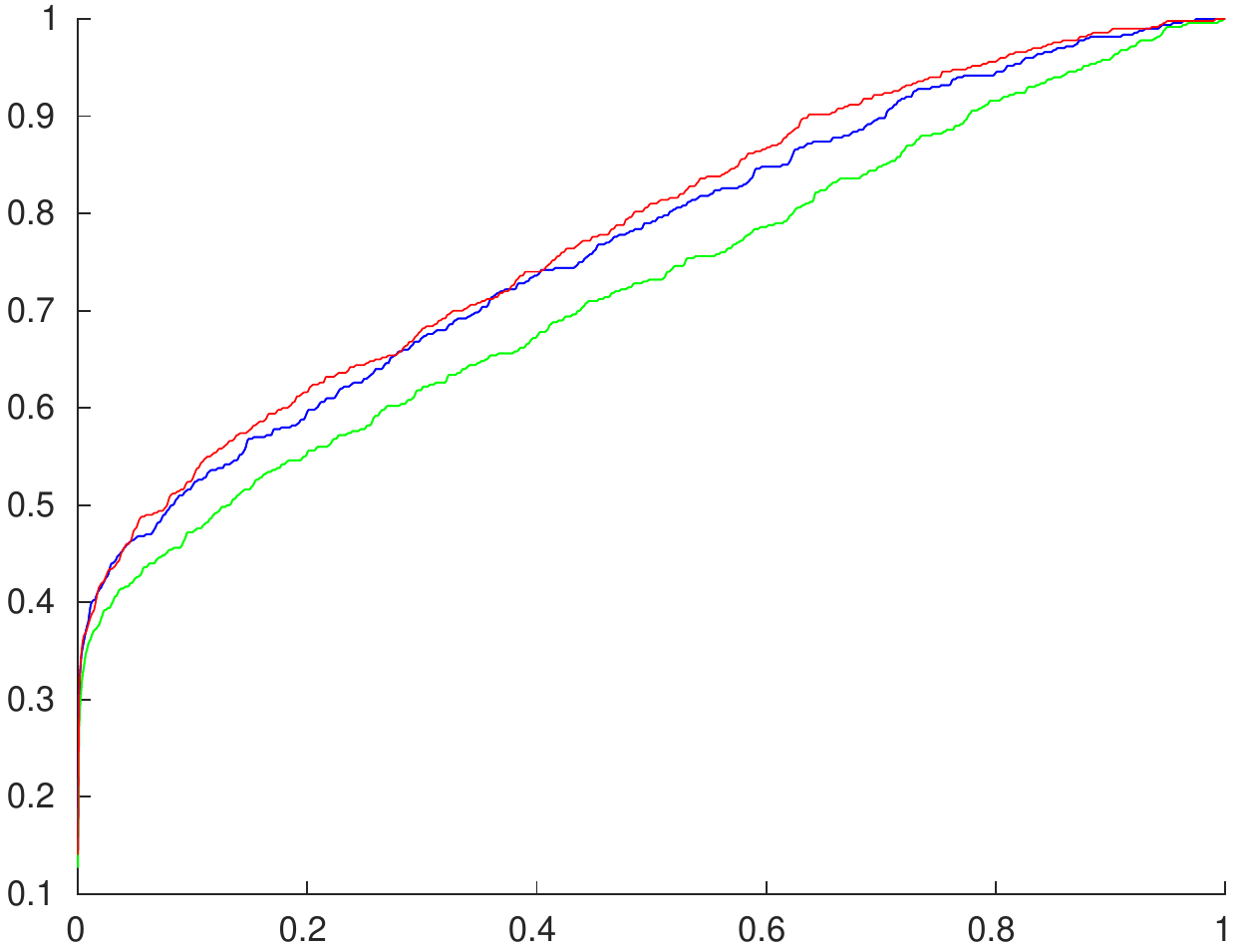}\qquad
      \includegraphics[width=0.4\textwidth]{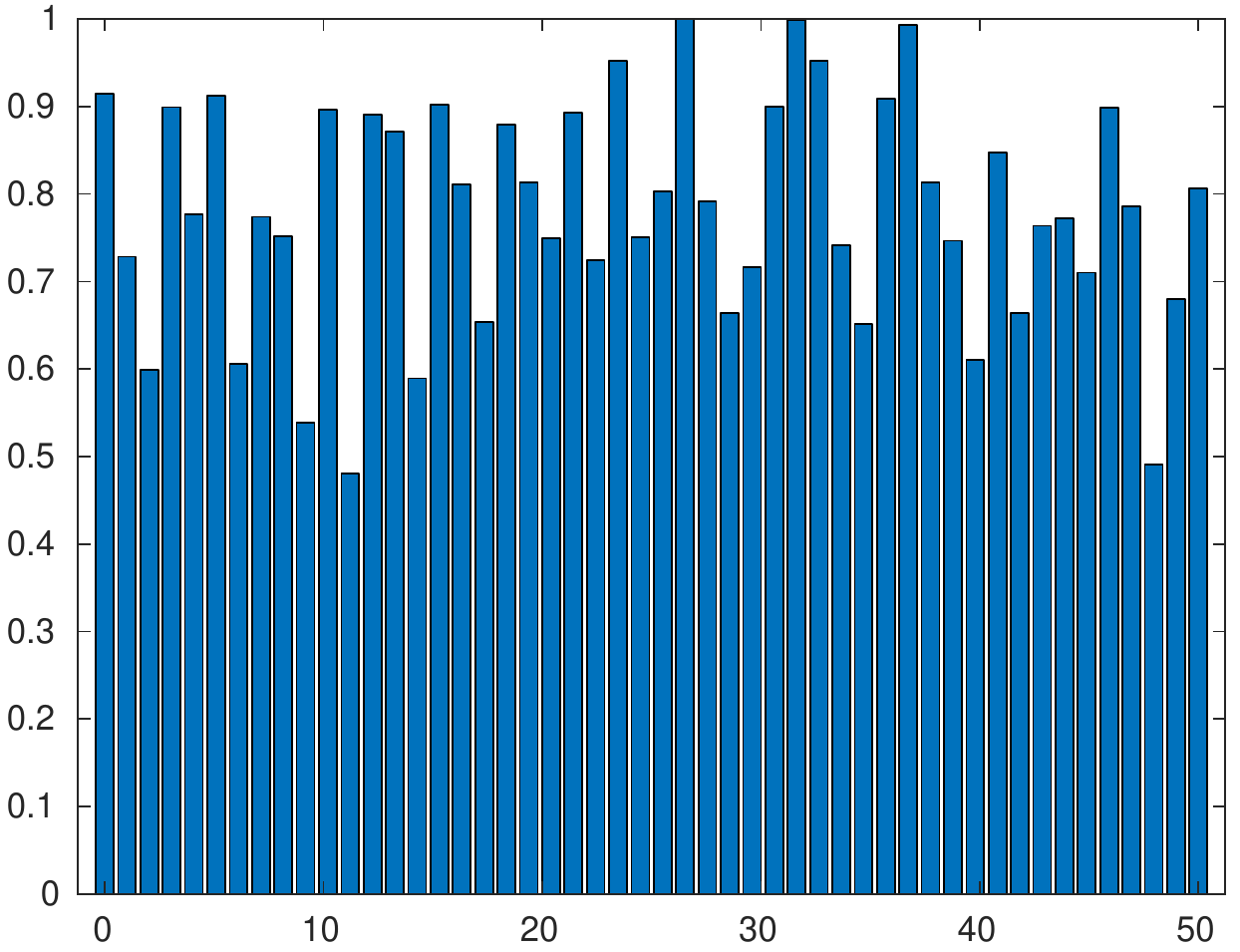}
        \caption{Left: Aggregate ROC curves for different metric parameters: $a=2$ (green), $a=1$ (blue), and the optimal value $a=0.73$ (red).
        Right: Bar chart showing the AUC for optimal metric parameter value $a=0.73$ for each testing protein.
        }
     \label{fig:Roc_curves}
\end{figure}

\bibliographystyle{abbrv}
\bibliography{ref}

\appendix

\setcounter{lemma}{0}
\renewcommand{\thelemma}{\Alph{section}.\arabic{lemma}}
\setcounter{remark}{0}
\renewcommand{\theremark}{\Alph{section}.\arabic{remark}}
\setcounter{theo}{0}
\renewcommand{\thetheo}{\Alph{section}.\arabic{theo}}
\setcounter{figure}{0}
\renewcommand{\thefigure}{\Alph{section}.\arabic{figure}}

\section{An interpretation from linear elasticity theory}
\label{subsec:elasticity}
With the definitions and notations of Section \ref{ssec:elastic_metrics}, let us consider a portion of an open smooth curve which is parametrized on an interval we will denote $I$, i.e. $c:I\rightarrow \R^2$ is an immersion of class $C^\infty$, and define the thin shell domain $\Omega \subset \R^2$ of ``thickness'' $\delta>0$ around that curve. Specifically, we take $\Omega$ as being given by the parametrization $\omega: [-\delta/2,\delta/2] \times I\rightarrow \Omega$ defined by $\omega(t,u) = c(u) + t n(u) $ where $n(u)$ is the unit normal vector to the curve $c$ at $u$. It is easy to see that for $\delta$ small enough, $\omega$ is a diffeomorphism that we can view as a foliation of $\Omega$. Indeed, we note that $\omega(0,\cdot) = c(\cdot)$ and for any $t \in [-\delta/2,\delta/2]$, $\omega_t: u\mapsto \omega(t,u)$ defines a parametrized curve which corresponds to layer $t$ of the foliation. Moreover, $\omega_t'(u) = c'(u) + t n'(u)$ and, as $n(u)$ is a unit vector orthogonal to $c'(u)$, we get that $\omega_t'(u)$ is parallel to $c'(u)$ which implies that $n(u)$ is also the unit normal vector to $\omega_t$ at $u$. Thus, for any $x=\omega(t,u) \in \Omega$, we can define the orthonormal vector frame $F(x) = (\tau(x),n(x))$ by:
\begin{equation*}
 \tau(x) = \frac{\omega_t'(u)}{|\omega_t'(u)|} = \frac{\partial_u\omega(t,u)}{|\partial_u\omega(t,u)|}, \ \ n(x) = n(u)
\end{equation*}
See the illustration given in Figure \ref{fig:2D_elastic_shell}. 

\begin{figure}
 \centering
 \includegraphics[width=10cm]{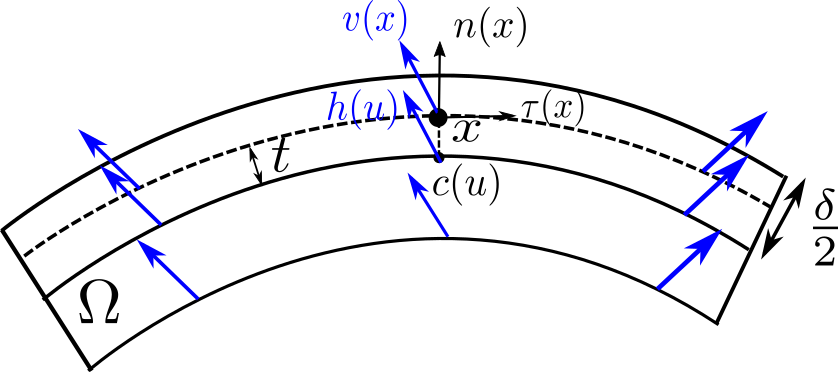} 
 \caption{Illustration of the thin shell elastic domain model.} \label{fig:2D_elastic_shell}
\end{figure}

Now, we model $\Omega$ as a linear elastic material which undergoes an infinitesimal deformation given by a smooth vector field $v:\Omega \rightarrow \R^2$. We shall further assume that this deformation field is uniform along the transversal direction, in other words that it takes the following form: for any $x=\omega(t,u)$, $v(x) = h(u)$ where $h$ is a vector field defined along the curve $c$ as in the previous section, c.f. again Figure \ref{fig:2D_elastic_shell} for visualization. Note that this is a natural assumption in the small thickness laminar model that we are interested in here. Then, following the approach of classical linear elasticity \cite{gurtin1981,ciarlet1988three-dimensional}, one introduces the $(2 \times 2)$ symmetric tensor field defined for all $x \in \Omega$ as $\varepsilon(x)=\frac{dv(x) + dv(x)^T}{2}$. This is known as the strain tensor associated to the deformation field $v$ and expressed in the canonical basis. Given the specific laminar structure of the domain here, it will be more convenient to instead consider the strain tensor relative to the above orthonormal frame $F(x)$, which is specifically $S(x) = F(x)^T \varepsilon(x) F(x)$.  The linear elastic energy associated to the deformation field is obtained from Hooke's law and take the general form:
\begin{equation}
 E(v) = \int_{\Omega} U(x,S(x)) dx
\end{equation}
where $U(x,\cdot)$ is a quadratic form on the space of symmetric $(2\times 2)$ matrices which is usually referred to as the elastic or stiffness tensor. In the present context, we will restrict the class of such elastic tensors by making a few additional assumptions. First, we will consider the elastic properties of the material to be uniform in the sense that $U(x,\cdot)$ does not depend on $x \in \Omega$. Then, viewing any symmetric matrix $S=(s_{ij})_{i,j=1,2}$ as the $(3\times 1)$ vector $(s_{11},s_{22},s_{12})^T$, the quadratic form $U$ can be identified with a single symmetric positive definite $(3 \times 3)$ matrix which we write as:
\begin{equation*}
 U = \begin{pmatrix} 
      u_{1,1} & u_{1,2} & u_{1,12} \\
      u_{1,2} & u_{2,2} & u_{2,12} \\
      u_{1,12} & u_{2,12} & u_{12,12}
     \end{pmatrix}.
\end{equation*}
The coefficients in $U$ assign weights to the different terms in the elastic energy in the following way. Both coefficients $u_{1,1}$ and $u_{2,2}$ correspond to spring-like stiffness coefficients in the tangential and normal directions respectively. Coefficient $u_{1,2}$, on the other hand, weighs the relative compression/stretching between tangential and normal direction. The coefficient $u_{12,12}$ can be associated with bending energy that results from a change of angle between the two directions. We will make some further symmetry assumptions on the material, namely that it is orthotropic with respect to the two directions $\tau(x)$ and $n(x)$ at each point $x$. This leads to the conditions $u_{1,12} = u_{2,12}=0$. Note that the orthotropy assumption is relatively common in many materials (with the exception of certain crystals) and include in particular fully isotropic materials, c.f. Remark \ref{rem:isotropic_materials} below. 

Going back more specifically to the deformation of the foliated domain $\Omega$, due to the particular form of the vector field $v$, we can see that for any $x=\omega(t,u) \in \Omega$, $dv(x) \cdot n(x) = 0$. This implies that the strain tensor is of the form:
\begin{equation*}
 S(x) = \begin{pmatrix}
         \tau(x)^T dv(x)\cdot \tau(x) & \frac{1}{2} (dv(x)\cdot \tau(x))^Tn(x) \\ 
         \frac{1}{2} (dv(x)\cdot \tau(x))^T n(x) & 0
        \end{pmatrix}
\end{equation*}
and so, identified as a $(3\times 1)$ vector, we have $S(x)=(s_{11}(x), 0 , s_{12}(x))^T$. Under the previous assumptions on the elastic tensor, we find the the following elastic energy:
\begin{align*}
 E(v) &=\int_{\Omega} (u_{1,1} s_{11}(x)^2 + u_{12,12} s_{12}(x)^2) dx \\
 &= \int_{-\frac{\delta}{2}}^{+\frac{\delta}{2}} \int_{I} (u_{1,1} s_{11}(\omega(t,u))^2 + u_{12,12} s_{12}(\omega(t,u))^2) \left|J_\omega(t,u) \right| du dt . 
\end{align*}
where $J_\omega$ denotes the Jacobian determinant of $\omega$.  We have seen that $\partial_t \omega(t,u) = n(u)$ and $\partial_t \omega(t,u) = c'(u) + t n'(u)$ with $c'(u)$ and $n'(u)$ being parallel vectors both orthogonal to $n(u)$. Thus, for all $(t,u)$, $|J_\omega(t,u)| = |c'(u) + t n'(u)|$.

Now, using the continuity with respect to $t$ of the inside integral and the mean value theorem, we obtain that:
\begin{align*}
 \lim_{\delta \rightarrow 0} \frac{1}{\delta} E(v) = \int_{I} (u_{1,1} s_{11}(\omega(0,u))^2 + u_{12,12} s_{12}(\omega(0,u))^2) |c'(u)| du.
\end{align*}
In addition, from $v(\omega(0,u)) = h(u)$, we get by differentiating that $dv(\omega(0,u))\cdot \tau(\omega(0,u)) = h(u)/|c'(u)|$. We can then rewrite the above expressions of $s_{11}$ and $s_{12}$ as: 
\begin{align*}
 &s_{11}(\omega(0,u)) = \tau(\omega(0,u))^T \left(\frac{h'(u)}{|c'(u)|} \right) =\left(\frac{c'(u)}{|c'(u)|}\right)^T \frac{h'(u)}{|c'(u)|}  \\
 &s_{12}(\omega(0,u)) = \frac{1}{2} \frac{n(\omega(0,u))^T h'(u)}{|c'(u)|}
\end{align*}
which finally leads to:
\begin{equation*}
 \lim_{\delta \rightarrow 0} \frac{1}{\delta} E(v) = \int_{I} \left[u_{1,1} (D_s h^{\top})^2 + \frac{u_{12,12}}{4} (D_s h^{\bot})^2\right] ds
\end{equation*}

In summary, we have shown that the expression of the Riemannian metric $G^{a,b}(h,h)$ of \eqref{eq:our_metric_ab} is obtained as the limit of the elastic energy of an orthotropic laminar thin shell domain as the thickness $\delta \rightarrow 0$, in which $a^2=u_{11}$ and $b^2=u_{12,12}/4$ can be interpreted as stretching and bending energy coefficients respectively.

\begin{remark}
\label{rem:isotropic_materials}
 In the special case of an isotropic elastic domain $\Omega$ (still with respect to the frame vectors $\tau(x)$ and $n(x)$), the elastic tensor takes the particular form:
 \begin{equation*}
  U = \begin{pmatrix} 
      2\mu+\lambda & \lambda & 0 \\
      \lambda & 2\mu+\lambda & 0 \\
      0 & 0 & 2\mu
     \end{pmatrix}.
 \end{equation*}
 in which $\lambda,\mu\geq 0$ are the so called Lam\'{e} coefficients of the material. This leads to $G^{a,b}$ metric for which $a^2 = 2 \mu + \lambda$ and $b^2=\mu/2$ i.e.
 \begin{equation*}
  \frac{b}{a} = \frac{1}{2} \sqrt{\frac{2\mu}{2\mu + \lambda}} \leq \frac{1}{2}.
 \end{equation*}
It is thus interesting to note that this stronger isotropy assumption on the elastic domain imposes the constraint that $b \leq a/2$, the limiting case $b=a/2$ corresponding precisely to the square root velocity (SRV) metric of \cite{srivastava2010shape}. 
\end{remark}

\begin{remark}
\label{rem:higher_dimensions}
The previous derivations can be extended to curves in higher dimensions relatively easily. In the case of a 3D curve for example, one can introduce a tubular neighborhood of small radius $\delta$ around that curve that is again deformed by a vector field uniform in the transverse direction. Then, assuming a material with transversely isotropic elastic properties, one can show that, as $\delta$ goes to $0$, the resulting elastic energy is again given by \eqref{eq:our_metric_ab}.   
\end{remark}

\section{Proof of Theorem \ref{genSRVT} and Corollary~\ref{thm:completion}}
To derive the formula for geodesic distance \eqref{eq:dist}, we will first prove a lemma on distances in the finite-dimensional space $(\R^d\setminus \{0\},g^\lambda)$.   
 Clearly $\mathbb R^d \setminus \{0\}$ is incomplete with respect to $g^{\lambda}$ for any $\lambda>0$. We can complete it as a metric space by reinserting the origin. Let $\R^d_\lambda$ denote the metric space that is the completion with respect to the $g^{\lambda}$ metric. Note that, as a point set, $\R^d_\lambda$ is just $\R^d$, but $\R^d_\lambda$ is not a Riemannian manifold when $\lambda \neq 1$,
as the Riemannian metric $g^\lambda$ cannot be smoothly extended to the origin in this case.  We obtain the following explicit formula for the distance $d_\lambda$ on this completion:

\begin{lemma}\label{geod_dist_glambda} For 
$q_1,q_2\in\R^d_\lambda$. 
we let 
\begin{equation}
d_\lambda(q_1,q_2)=\sqrt{|q_1|^2+|q_2|^2-2|q_1||q_2|\cos(\lambda\theta)},
\end{equation}
where
\begin{equation}
\theta=\begin{cases}
\operatorname{min}\left(\cos^{-1}(q_1\cdot q_2/|q_1||q_2|),\tfrac{\pi}{\lambda}\right) &\text{ if $q_1$ and $q_2$ are both non-zero}\\
 \tfrac{\pi}{\lambda} &\text{ if $q_1$ or $q_2$ is zero}
\end{cases}
\end{equation}
Then $(\R^d_\lambda,d_\lambda)$ is the metric completion of the Riemannian manifold $(\mathbb R^d \setminus \{0\},g^{\lambda})$. 
\end{lemma}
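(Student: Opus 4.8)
The plan is to exploit the fact that, in polar-type coordinates, $(\R^d\setminus\{0\},g^\lambda)$ is a warped product: writing $q=ru$ with $r=|q|>0$ and $u\in S^{d-1}$, one checks directly from the definition of $g^\lambda$ that the radial direction and the spherical directions are $g^\lambda$-orthogonal and that $g^\lambda=dr^2+\lambda^2r^2\,d\sigma^2$, where $d\sigma^2$ is the round metric on $S^{d-1}$. Equivalently, for a path $c(t)=r(t)u(t)$ avoiding the origin,
\[
\mathrm{Length}(c)=\int\sqrt{\dot r(t)^2+\lambda^2r(t)^2\,|\dot u(t)|_{S^{d-1}}^2}\,dt ,
\]
so $(\R^d\setminus\{0\},g^\lambda)$ is precisely the smooth locus of the Euclidean metric cone over the round sphere of radius $\lambda$, and the computation of geodesic distance reduces to an essentially two-dimensional cone computation.

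\textbf{Lower bound.} I would first show $d_{\mathrm{Riem}}(q_1,q_2)\ge d_\lambda(q_1,q_2)$. Set $\theta_0=\cos^{-1}\!\big(q_1\cdot q_2/(|q_1||q_2|)\big)$. Since $u\mapsto d_{S^{d-1}}(u_1,u)$ is $1$-Lipschitz, the function $\beta(t):=d_{S^{d-1}}(u_1,u(t))$ satisfies $|\dot\beta|\le|\dot u|_{S^{d-1}}$ a.e., so $\mathrm{Length}(c)$ is at least the length, with respect to $dr^2+\lambda^2r^2\,d\beta^2$, of the planar path $t\mapsto(r(t),\beta(t))$ joining $(r_1,0)$ to $(r_2,\theta_0)$. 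After the substitution $\psi=\lambda\beta$ this becomes a curve in the flat model $dr^2+r^2\,d\psi^2$, and the classical planar law of cosines — combined with the observation that a length-minimizing arc of a punctured plane subtends angle strictly less than $\pi$, so that any arc sweeping total angle $\ge\pi$ must come arbitrarily close to the apex and hence has length at least $r_1+r_2$ — yields $\mathrm{Length}(c)\ge\sqrt{r_1^2+r_2^2-2r_1r_2\cos(\min(\lambda\theta_0,\pi))}=d_\lambda(q_1,q_2)$. Paths through the origin and the degenerate cases $q_1=0$ or $q_2=0$ are immediate, since there $\mathrm{Length}(c)\ge\int|\dot r|\ge r_1+r_2\ge d_\lambda(q_1,q_2)$.

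\textbf{Upper bound and completion.} For the matching upper bound I would restrict to the $2$-plane $P$ through $0$, $q_1$, $q_2$ (any plane containing the common line if $q_1\parallel q_2$); there $g^\lambda$ has the form $dr^2+\lambda^2r^2\,d\phi^2$, and for $\lambda\theta_0<\pi$ the image under $\psi=\lambda\phi$ of the straight segment from $(r_1,0)$ to $(r_2,\lambda\theta_0)$ stays in $r>0$ and has length exactly $d_\lambda(q_1,q_2)$, while for $\lambda\theta_0\ge\pi$ one approximates the broken radial path from $q_1$ to $\varepsilon u_1$, then to $\varepsilon u_2$, then to $q_2$, whose length tends to $r_1+r_2=d_\lambda(q_1,q_2)$ as $\varepsilon\to0$. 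This gives $d_{\mathrm{Riem}}=d_\lambda$ on $\R^d\setminus\{0\}$. To conclude that $(\R^d_\lambda,d_\lambda)$ is the metric completion: density of $\R^d\setminus\{0\}$ holds since $d_\lambda\big(\tfrac1n u,0\big)=\tfrac1n\to0$; completeness follows from $d_\lambda(q_1,q_2)\ge\bigl|\,|q_1|-|q_2|\,\bigr|$ (so the radii of a Cauchy sequence converge, say to $\rho$) together with the uniform comparability of $g^\lambda$ and the Euclidean metric as quadratic forms on any annulus $\{c\le|q|\le C\}$, $c>0$, so that $d_\lambda$ is bi-Lipschitz to the Euclidean distance there: if $\rho=0$ the sequence converges to the origin, and if $\rho>0$ it is Euclidean-Cauchy in an annulus. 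Finally $d_\lambda$ is genuinely a metric: symmetry is clear, and the triangle inequality follows from that of $d_{\mathrm{Riem}}$ on the open part, with the elementary bounds $\bigl|\,|q_1|-|q_2|\,\bigr|\le d_\lambda(q_1,q_2)\le|q_1|+|q_2|$ handling triangles through the origin.

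\textbf{Main obstacle.} The one genuinely non-routine point is the planar-cone distance in the regime $\lambda\theta_0\ge\pi$: the naive straight chord in $\R^2$ would be shorter, but it lies in the wrong homotopy class of the punctured plane, so one must argue that the through-the-apex value $r_1+r_2$ cannot be beaten. I would do this by choosing a time $\tau$ at which the angular coordinate has increased by exactly $\pi$, bounding the length on $[0,\tau]$ below by $r_1+r(\tau)$ via the total variation of the Cartesian coordinate $r(t)\cos(\text{angle})$, and bounding the length on $[\tau,1]$ below by $|r(\tau)-r_2|$, so the total is $\ge r_1+r_2$. Alternatively — and probably cleanest to write up — one can simply invoke the standard formula for distances in Euclidean metric cones (as in Bridson--Haefliger or Burago--Burago--Ivanov) applied to the cone over the radius-$\lambda$ sphere, whose diameter is $\lambda\pi$; then $d_\lambda$ is exactly that cone metric, the completion adds the apex, and the truncation $\theta=\min(\,\cdot\,,\pi/\lambda)$ is the usual ``cap at $\pi$'' appearing in the cone distance formula.
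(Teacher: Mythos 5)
Your proposal is correct, and it rests on the same underlying geometry as the paper's proof --- both arguments ``unroll'' the cone by rescaling the angular coordinate by $\lambda$ and then invoke the planar law of cosines, truncating the angle at $\pi$ --- but the execution differs at three points. First, the paper reduces immediately to $d=2$ (asserting at the end that any two points lie in a totally geodesic copy of $\R^2_\lambda$) and works with an explicit sector isometry $F(r\cos\alpha,r\sin\alpha)=(r\cos\lambda\alpha,r\sin\lambda\alpha)$, using convexity of the image sector when $\lambda\theta\le\pi$; your lower bound instead projects an arbitrary path in $\R^d\setminus\{0\}$ onto the two-dimensional model via the $1$-Lipschitz map $u\mapsto d_{S^{d-1}}(u_1,u)$, which proves the bound in all dimensions at once and avoids having to justify that the $2$-planes are totally geodesic. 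Second, for $\lambda\theta\ge\pi$ the paper rescales the sector to total angle $\pi$ and compares $g^{\lambda'}\ge g^{1}$ for $\lambda'\ge 1$ to conclude that the distance equals $|q_1|+|q_2|$, whereas you propose either a total-variation argument at the time the swept angle reaches $\pi$ or a citation of the standard Euclidean-cone distance formula; all three routes work. Third, you actually verify the completion claims (completeness via the bound $d_\lambda(q_1,q_2)\ge\bigl|\,|q_1|-|q_2|\,\bigr|$ together with bi-Lipschitz comparison with the Euclidean metric on annuli, density of the punctured space, and the triangle inequality for triangles through the apex), which the paper's proof leaves implicit; this is a worthwhile addition, since the lemma as stated asserts that $(\R^d_\lambda,d_\lambda)$ is the metric completion and not only that the distance formula holds on $\R^d\setminus\{0\}$.
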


\begin{proof} First we consider the case $d=2$.  Without loss of generality, assume that $q_1=(h,0)$ and $q_2=(k\cos\theta,k\sin\theta)$, where $h,k>0$ and $0\leq\theta\leq\pi$. (This can easily be arranged, since reflections in the $x$-axis and rotations about the origin are isometries of $\R^2_\lambda$.) Define the sector $V_\theta\subset\R^n_\lambda$ by $V_\theta=\{(r\cos\alpha,r\sin\alpha):0\leq\alpha\leq\theta\mbox{ and } r\geq0\}$. Define a function $F:V_\theta\to\R^2$ by 
\[
F(r\cos\alpha,r\sin\alpha)=(r\cos\lambda\alpha,r\sin\lambda\alpha).
\]
We now consider the case $\lambda\theta\leq\pi$, and we show that $F$ is an isometry from $V_{\theta}$ with metric given by the completion of the Riemannian manifold $(\mathring{V_{\theta}},g^{\lambda})$ into $\mathbb{R}^2$ with the Euclidian metric. In the following computations, we fix a basepoint $q=(r\cos{\alpha},r\sin{\alpha})$ in the interior $\mathring{V_{\theta}}$, and $v,w\in T_q\mathring{V_{\theta}}$. We can easily see that $F\circ O_{-\alpha}=O_{-\lambda\alpha}\circ F$ where $O_{\phi}:\R^2 \to \R^2$ is rotation by angle $\phi$. Therefore, as rotations are isometries for the Euclidian metric $g^1$,
we have
\begin{align*}
    g^1_{F(q)}(d_qF(v),d_qF(w)) &= g^1_{O_{-\lambda\alpha}\circ F(q)}(d_{F(q)}O_{-\lambda\alpha}\circ d_qF(v),d_{F(q)}O_{-\lambda\alpha}\circ d_qF(w)) \\
    &= g^1_{F \circ O_{-\alpha}(q)}(d_{O_{-\alpha}(q)} F \circ d_q O_{-\alpha}(v),d_{O_{-\alpha}(q)} F \circ d_q O_{-\alpha}(w)) \\
    &= g^1_{F(r,0)}(d_{(r,0)}F(\tilde{v}),d_{(r,0)}F(\tilde{w}))
\end{align*}
with $\tilde{v}:= d_q O_{-\alpha}(v) = O_{-\alpha}(v)$ and $\tilde{w}:= d_q O_{-\alpha}(w) = O_{-\alpha}(w)$. We can easily compute that $d_{(r,0)} F(\tilde{v}) = (\tilde{v}^{\top}, \lambda \tilde{v}^{\bot})$, and putting this together with the calculation above yields

\begin{align*}
    g^1_{F(q)}(d_qF(v),d_qF(w)) &= g^1_{F(r,0)}\left((\tilde{v}^{\top},\lambda\tilde{v}^{\bot}),(\tilde{w}^{\top},\lambda\tilde{w}^{\bot})\right) \\
    &=\tilde{v}^{\top}\tilde{w}^{\top} + \lambda^2\tilde{v}^{\bot}\tilde{w}^{\bot} \\
    &= g^{\lambda}_{(r,0)}(\tilde{v},\tilde{w})
\end{align*}
As $g^{\lambda}$ is invariant under rotations, we  have 
\[
g^{\lambda}_{(r,0)}(\tilde{v},\tilde{w})=g^\lambda_{O_\alpha (r,0)}(O_\alpha(v),O_\alpha(w)) = g^{\lambda}_{q}(v,w)
\]
and it follows that $F$ is a Riemannian isometric embedding from $(\mathring{V_{\theta}},g^{\lambda})$ into $(\mathbb{R}^2,g^1)$. It extends to a metric isometric embedding from $V_{\theta}$ into $\mathbb{R}^2$ by completion.

 $F$ is also injective and $F(V_\theta)$ is a convex subset of $\R^2$, so the geodesic (i.e. straight line) in $F(V_\theta)$ joining $F(q_1)$ to $F(q_2)$ remains in $F(V_\theta)$. Thus if we apply $F^{-1}$ to this straight line, we obtain a geodesic in $\R^2_\lambda$ joining $u$ to $v$. Since $F$ is an isometry, the length of the geodesic in $\R^2_\lambda$ is the same as the length of the straight line, and is given by the desired formula, as a simple application of the law of cosines in $\R^2$ shows.

In case that $\lambda\theta\geq\pi$,  let $\lambda' = \frac{\lambda \theta}{\pi}$. We have $\lambda' \geq 1$.
We consider once again $F_{\frac{\pi}{\theta}}: V_{\theta}\rightarrow V_{\pi}$:
$$
F_{\frac{\pi}{\theta}} (r \cos{\alpha}, r \sin{\alpha}) = (r \cos{\frac{\pi}{\theta}\alpha}, r \sin{\frac{\pi}{\theta}\alpha})
$$
Then $F_{\frac{\pi}{\theta}}$ is an isometry from $(V_{\theta}, g^{\lambda})$ to $(V_{\pi}, g^{\frac{\lambda \theta}{\pi}})$, and therefore :
$$
\mbox{dist}_{g^{\lambda}}\left( \left(h,0\right), \left(r \cos{\theta}, r \sin{\theta}\right) \right) = \mbox{dist}_{g^{\lambda'}}\left( (h,0), \left(-r , 0\right) \right)
$$

Because $\lambda' \geq 1$, for all $\omega \in \mathbb{R}^2$, $g^{\lambda'}(\omega, \omega) \geq |\omega'|_{\mathrm{euc}}^2$, and therefore $\mbox{dist}_{g^{\lambda'}} \geq \mbox{dist}_{\mathrm{euc}}$. But taking the straight line between $(h,0)$ and $(-r,0)$, we then have $\mbox{dist}_{g^{\lambda'}}\left( (h,0), \left(-r , 0\right) \right) \leq h+r$, and therefore $\mbox{dist}_{g^{\lambda'}}\left( (h,0), \left(-r , 0\right) \right) = h+r$. This yields again the desired formula. 

Having proved the theorem for $d=2$ it follows for general $d$, since any pair of elements $u,v$ is contained in a totally geodesic copy of $\R^2_\lambda\subset \R^d_\lambda$.
\end{proof}

\begin{proof}[Proof of Theorem \ref{genSRVT} and Corollary~\ref{thm:completion}.]
The statement that $R$ is a diffeomorphism is clear from the definition of the involved spaces; see also~\cite{bruveris2016optimal,lahiri2015precise}. It remains to show that $R$ is a Riemannian isometry. To this end, we calculate the derivative of $R$ at $c \in \mathcal I_0([0,1],\mathbb R^d)$ in the direction $h \in T_c I_0([0,1],\mathbb R^d)$ as
\[
d_cR(h) = \frac{1}{\sqrt{|c'|}}\left(h' - \frac{1}{2}\left(h' \cdot \frac{c'}{|c'|} \right) \frac{c'}{|c'|} \right).
\]
The component of $d_cR(h)$ tangential to $R(c)$ is 
\begin{align*}
\big(d_cR(h)\big)^\top &= \left(d_cR(h) \cdot \frac{c'}{|c'|}\right) \frac{c'}{|c'|} = \frac{1}{2\sqrt{|c'|}}  \left(h' \cdot \frac{c'}{|c'|} \right) \frac{c'}{|c'|} \\
&= \frac{\sqrt{|c'|}}{2} \left(\frac{1}{|c'|} h' \cdot \frac{c'}{|c'|} \right) \frac{c'}{|c'|} = \frac{\sqrt{|c'|}}{2} D_s h^\top.
\end{align*}
Similarly, the orthogonal component is given by
$
\big(d_cR(h) \big)^\bot = \sqrt{|c'|} D_s h^\bot.
$
Therefore, for $h,k \in T_c I_0([0,1],\mathbb R^d)$, we have
\begin{align*}
    4b^2 G_{R(c)}^{L^2_{\lambda}}(d_cR(h),d_cR(k)) &= 4b^2 \int_0^1 \lambda^2 |c'| \big(D_s h^\bot \cdot D_s k^\bot\big) + \frac{|c'|}{4} \big(D_s h^\top \cdot D_s k^\top\big) \; du \\
    &=  \int_0^1 a^2 \big(D_s h^\bot \cdot D_s k^\bot\big) + b^2 \big(D_s h^\top \cdot D_s k^\top\big) \; ds =  G^{a,b}_c(h,k),
\end{align*}
where, in the last line, we use that $\lambda = \frac{a}{2b}$ and $ds = |c'(u)| du$. The proves that $R$ is an isometry.

It remains to derive the geodesic distance formula. To do so, we recall a general fact about geodesics in path spaces. Let $(M,g)$ be a (finite-dimensional) Riemannian manifold and consider the space 
\[
\mathcal M :=C^\infty([0,1],M).
\]
By~\cite{bruveris20182}, a path in ${\mathcal M}$ given by $t \mapsto c_t$ is a length minimizing geodesic with respect to the $L^2$-Riemannian metric (defined by
\[
G^{L^2}_c(h,k) = \int_0^1 g_{c(u)}(h(u),k(u)) du
\]
for $h,k \in T_c \mathcal{M}$ parameterized smooth vector fields along $c$) if and only if for (almost all) fixed $u_0$, the curve given by $t \mapsto c_t(u_0)$ is a length-minimizing geodesic in $M$. Consequently, the geodesic distance between $c_0,c_1 \in \mathcal M$ is given by
\begin{equation}
\sqrt{\int_0^1 \operatorname{dist}^M(c_0(u),c_1(u))^2 du }, 
\end{equation}
where $\operatorname{dist}^M$ denotes geodesic distance in the finite-dimensional manifold $M$. 

We can now apply the above result with $(M,g) = (\mathbb R^d\setminus\{0\},g^\lambda)$. Let $c_0,c_1\in \mathcal I_0([0,1],\mathbb R^d)$ such that 
the geodesic in $(\mathbb R^d,g^\lambda)$ between $R(c_0)(u)$ and $R(c_1)(u)$ does not pass through the origin for any $u\in[0,1]$. Then the formula for the geodesic distance follows directly by the formula from Lemma \ref{geod_dist_glambda} and the above considerations -- note that in this case the minimum in the definition of $\theta$ is always given by the arccos term. This proves the local formula for the geodesic distance. To obtain the global formula one needs to smoothly
perturb any path that passes through the origin in such a way that the perturbed
path avoids the origin. It is easy to see that this is possible if $d\geq 3$, which shows the global formula for the geodesic distance. For $d=2$ and $\lambda=1$ a counterexample, i.e., two curves where the minimizing path can not be perturbed to avoid the origin,  has been constructed in \cite{bruveris2016optimal}. A similar argument works for general $\lambda$ and thus the formula for the geodesic distance is only valid locally in this case.
\end{proof}
To prove the statements on the metric completion we will first study these completions in the space of $R$-transforms, i.e., on $C^\infty(I, \mathbb R^d\setminus\{0\}$): 
\begin{lemma}\label{lem:complL2}
For $q_1,q_2\in L^2([0,1],\mathbb R^d)$ we let 
\begin{equation}
d_{G^{L^2_{\lambda}}}(q_1,q_2)=\sqrt{\int_0^1 |q_1(u)|^2+|q_2(u)|^2-2|q_1(u)||q_2(u)|\cos(\lambda\theta(u)) du},
\end{equation}
where
\begin{equation}
\theta(u)=\begin{cases}
\operatorname{min}\left(\cos^{-1}(q_1(u)\cdot q_2(u)/|q_1(u)||q_2(u)|),\tfrac{\pi}{\lambda}\right) &\text{ if $q_1(u), q_2(u) \neq 0$}\\
 \tfrac{\pi}{\lambda} &\text{ otherwise.}
\end{cases}
\end{equation}
We have the following two statements regarding the completion of the space of smooth functions: The space $\left(L^{2}([0,1],\mathbb R^d_{\lambda}),d_{G^{L^2_{\lambda}}}\right)$ is the metric completion
of the geodesic completion $C^{\infty}([0,1],\mathbb R^d_\lambda)$. 
If $d\geq 3$, then  $\left(L^{2}([0,1],\mathbb R^d_{\lambda}),d_{G^{L^2_{\lambda}}}\right)$ is also the metric completion
of $\left(C^{\infty}([0,1],\mathbb R^d \setminus \{0\}),G^{L^2_{\lambda}}\right)$. 
\end{lemma}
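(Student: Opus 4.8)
The plan is to reduce both assertions to standard facts about the Lebesgue space $L^2([0,1],\R^d)$ by means of a global bi-Lipschitz equivalence. The crucial observation is that, for every fixed $\lambda>0$, the completion distance $d_\lambda$ of Lemma~\ref{geod_dist_glambda} is bi-Lipschitz equivalent to the Euclidean distance on $\R^d_\lambda$ (which is $\R^d$ as a point set): for all $p,q\in\R^d_\lambda$ one has $\min(1,\lambda)|p-q|\le d_\lambda(p,q)\le\max(1,\lambda)|p-q|$. I would prove the lower bound from the pointwise comparison $\min(1,\lambda^2)|v|^2\le g^\lambda_x(v,v)\le\max(1,\lambda^2)|v|^2$ (immediate from $g^\lambda_x(v,v)=|v^\top|^2+\lambda^2|v^\bot|^2$): every path $\gamma$ in $\R^d\setminus\{0\}$ then satisfies $\mathrm{Length}_{g^\lambda}(\gamma)\ge\min(1,\lambda)\,\mathrm{Length}_{\mathrm{euc}}(\gamma)\ge\min(1,\lambda)|p-q|$, and $d_\lambda$ is the infimum of $g^\lambda$-lengths (Lemma~\ref{geod_dist_glambda}); the cases where $p$ or $q$ is the origin follow directly from the explicit formula, which gives $d_\lambda(p,0)=|p|$. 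For the upper bound, if the segment $[p,q]$ avoids the origin then $\mathrm{Length}_{g^\lambda}([p,q])\le\max(1,\lambda)|p-q|$, while if $[p,q]$ meets the origin then $p$ and $q$ are antiparallel and the triangle inequality gives $d_\lambda(p,q)\le d_\lambda(p,0)+d_\lambda(0,q)=|p|+|q|=|p-q|$. Integrating these pointwise estimates yields $\min(1,\lambda)\|q_1-q_2\|_{L^2}\le d_{G^{L^2_\lambda}}(q_1,q_2)\le\max(1,\lambda)\|q_1-q_2\|_{L^2}$; in particular $d_{G^{L^2_\lambda}}$ is a genuine finite metric on $L^2([0,1],\R^d_\lambda)=L^2([0,1],\R^d)$, it induces the usual $L^2$ topology, and it has exactly the same Cauchy sequences and the same dense subsets as the standard $L^2$-metric.

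With this equivalence in hand, both claims become routine. Completeness of $\left(L^2([0,1],\R^d_\lambda),d_{G^{L^2_\lambda}}\right)$ is immediate: a $d_{G^{L^2_\lambda}}$-Cauchy sequence is $\|\cdot\|_{L^2}$-Cauchy, hence $L^2$-convergent to some $q_\infty$, and then $d_{G^{L^2_\lambda}}(q_n,q_\infty)\le\max(1,\lambda)\|q_n-q_\infty\|_{L^2}\to0$. For density, $C^\infty([0,1],\R^d)$ is $\|\cdot\|_{L^2}$-dense in $L^2([0,1],\R^d)$ by mollification, and when $d\ge3$ one can furthermore perturb a smooth curve by a generic constant vector so that it avoids the origin, at arbitrarily small $L^2$-cost, so $C^\infty([0,1],\R^d\setminus\{0\})$ is $\|\cdot\|_{L^2}$-dense as well; by the bi-Lipschitz equivalence both are $d_{G^{L^2_\lambda}}$-dense. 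It remains to note that the restriction of $d_{G^{L^2_\lambda}}$ to the relevant space of smooth curves is exactly the geodesic distance in the statement: for the geodesic completion $C^\infty([0,1],\R^d_\lambda)$ this is what the distance formula of Theorem~\ref{genSRVT} computes, via the path-space geodesic principle of \cite{bruveris20182} together with Lemma~\ref{geod_dist_glambda} (the origin-crossing pointwise geodesics being available in $\R^d_\lambda$), and for the honest Riemannian metric $G^{L^2_\lambda}$ on $C^\infty([0,1],\R^d\setminus\{0\})$ with $d\ge3$ it is the global formula of Theorem~\ref{genSRVT}, whose proof already supplies the perturbation of origin-crossing pointwise geodesics into smooth origin-avoiding paths in path space. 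Combining the identification of the subspace metric, the density of the subspace, and the completeness of the target then gives both assertions.

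The main obstacle — and the reason $d\ge3$ is required in the second statement — is precisely this identification of the restricted metric with the geodesic distance in the purely Riemannian (non-geodesically-completed) setting. For $d=2$ there are pairs of smooth curves between which every smooth path in $C^\infty([0,1],\R^2\setminus\{0\})$ is strictly longer than $d_{G^{L^2_\lambda}}$, because the pointwise-optimal geodesics pass through the origin and cannot be perturbed off it while staying smooth in the curve parameter; this is the Bruveris-type counterexample recalled after Theorem~\ref{genSRVT}, and it shows that the metric completion in that case is a genuinely larger metric space, whereas passing to the geodesic completion restores the formula in every dimension. Everything else — the bi-Lipschitz estimate (its small case analysis near the origin notwithstanding), completeness, and density — is standard once the equivalence of $d_{G^{L^2_\lambda}}$ with the usual $L^2$-metric has been established.
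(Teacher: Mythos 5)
Your argument is correct, and it supplies substantially more detail than the paper does: the paper's own proof of this lemma is a one-line reduction to the definition of the geodesic distance, the proof of Theorem~\ref{genSRVT}, and Lemma~\ref{geod_dist_glambda}. The concrete device you add --- the global bi-Lipschitz comparison $\min(1,\lambda)\,|p-q|\le d_\lambda(p,q)\le\max(1,\lambda)\,|p-q|$, integrated to give the equivalence of $d_{G^{L^2_\lambda}}$ with the flat $L^2$ norm --- is not made explicit in the paper, and it is a clean way to dispatch both completeness and density in one stroke (it also yields the triangle inequality for $d_{G^{L^2_\lambda}}$ via Minkowski, which you should state, since ``genuine metric'' is part of what must be checked). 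Your case analysis for the upper bound (segment avoiding versus meeting the origin, the antiparallel case giving $|p|+|q|=|p-q|$) is sound. Two small remarks. First, your density argument for $C^\infty([0,1],\R^d\setminus\{0\})$ does not actually need $d\ge 3$: the image of a smooth curve has Lebesgue measure zero in $\R^d$ already for $d=2$, so a generic constant translation works in every dimension; as you yourself then correctly observe, the hypothesis $d\ge3$ is needed only for the identification of the restriction of $d_{G^{L^2_\lambda}}$ with the Riemannian geodesic distance on the origin-avoiding curve space, where origin-crossing pointwise geodesics must be perturbed within path space --- it would be cleaner not to attach $d\ge3$ to the density step at all. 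Second, the identification of the subspace metric with the geodesic distance (both for the geodesic completion and, when $d\ge3$, for the honest Riemannian space) is deferred to the proof of Theorem~\ref{genSRVT} in your write-up exactly as in the paper, so on that point the two arguments coincide.
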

\begin{proof}
This follows directly from the definition of the geodesic distance on 
$C^{\infty}([0,1],\mathbb R^d \setminus \{0\})$, the proof of Theorem \ref{genSRVT}, and Lemma \ref{geod_dist_glambda}.
\end{proof}
Now corollary~\ref{thm:completion} follows from the results above and the formula for $R$.

\section{Proof of Theorem \ref{thm:existence}.}\label{appendix:existenceopen}
The main ingredient for the existence proof is the following result by Trouve and Younes, concerning the existence of minimizers for a wide class of optimization problems:
\begin{theo}[Theorem 3.1 and Prop. 5.1 in \cite{article}]\label{thm:Younes_Trouve} Let $f:[0,1]\times[0,1]\rightarrow \mathbb{R}_{\geq 0}$ be a bounded measurable function that satisfies the following condition
\begin{enumerate}
    \item[(H1)] There exists a finite family of closed segments $([a_j,b_j])_{j\in J}$ such that each of them is horizontal or vertical and $f$ is continuous on $[0,1]^2\backslash \bigcup_{j \in J}[a_j,b_j]$
\end{enumerate}
Then there exists an non-decreasing BV-function $\phi\in \mathcal D^*$ that maximizes the functional  $\phi \mapsto \int_D \sqrt{\dot{\phi}}(x) f(x,\phi(x)) d x$. 
Let $f_s$ be defined by
  \begin{equation*}
    f_s(x_0,y_0) = \lim_{\delta\rightarrow 0}\left(\inf\left\{f\left(x,y\right) | \left(x,y\right)\in [0,1]^2\backslash \bigcup_{j \in J}[a_j,b_j], |\left(x,y\right)-\left(x_0,y_0\right)|<\delta \right\} \right)
  \end{equation*}
Assume that $f_s$ satisfies in addition the condition:
\begin{enumerate}
    \item[(H2)] 
There does not exist any nonempty open vertical or horizontal segment $]a,b[$ such that $f_s$ vanishes on $]a,b[$.
\end{enumerate}
Then the optimizer $\phi^* \in \mathcal{D}^*$ is a strictly increasing homeomorphism.
\end{theo}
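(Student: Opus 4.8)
The plan is to prove existence by the direct method of the calculus of variations and then upgrade to the regularity statement by a local perturbation argument, the two driving features of the functional being the \emph{concavity} of $r\mapsto\sqrt r$ and the near-continuity of $f$ supplied by (H1). A convenient device throughout is to replace a single monotone map $\phi$ by a \emph{symmetric parametrized pair}: writing the graph of $\phi$ as $t\mapsto(\alpha(t),\beta(t))$ with $\alpha,\beta$ nondecreasing, $\alpha(0)=\beta(0)=0$, $\alpha(1)=\beta(1)=1$ and $\beta=\phi\circ\alpha$, the functional becomes
\begin{equation*}
J(\phi) = \int_0^1 \sqrt{\dot\alpha(t)\,\dot\beta(t)}\, f(\alpha(t),\beta(t))\, dt,
\end{equation*}
which is invariant under a common reparametrization of $(\alpha,\beta)$ and symmetric in the two variables. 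This symmetry makes horizontal and vertical segments of the graph (flat parts and jumps of $\phi$) interchangeable and will streamline the regularity step.

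For existence, I would fix the parametrization gauge $\alpha(t)+\beta(t)=2t$ (collapsing the parameter intervals where both are constant), so that $0\le\dot\alpha,\dot\beta\le 2$ and $\alpha,\beta$ are $1$-Lipschitz. A maximizing sequence $(\alpha_n,\beta_n)$ is then equicontinuous, and Arzel\`a--Ascoli yields a subsequence converging uniformly to an admissible limit $(\alpha,\beta)$, with $\dot\alpha_n,\dot\beta_n\rightharpoonup\dot\alpha,\dot\beta$ weakly-$*$ in $L^\infty$. The heart of the matter is upper semicontinuity of $J$ along this convergence, and it rests on the joint concavity of the geometric mean $(\dot\alpha,\dot\beta)\mapsto\sqrt{\dot\alpha\,\dot\beta}$ on the positive quadrant. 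For concave integrands the integral functional is upper semicontinuous under weak-$*$ convergence of the derivatives (this is dual to the classical lower-semicontinuity theory for convex integrands); combined with the uniform convergence $\alpha_n\to\alpha$, $\beta_n\to\beta$ and hypothesis (H1), which guarantees $f(\alpha_n,\beta_n)\to f(\alpha,\beta)$ off the finitely many exceptional segments, this yields $\limsup_n J(\alpha_n,\beta_n)\le J(\alpha,\beta)$. The exceptional segments are absorbed by replacing $f$ with its lower stabilization $f_s$, so that the limit curve cannot gain spurious value by running along a discontinuity of $f$; hence the limit is a maximizer and corresponds to some $\phi^*\in\mathcal D^*$.

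For the regularity under (H2), I would show that $\phi^*$ has neither a flat part nor a jump, as either would contradict maximality. Suppose $\phi^*$ is constant, equal to $c$, on an open interval; in the symmetric picture this is a horizontal segment $\{(x,c):a<x<b\}$ along which $\dot\beta=0$, so the integrand vanishes identically there. By (H2), $f_s$ does not vanish on this open segment, hence $f_s\ge\eta>0$ on a subinterval. I would then build an admissible competitor by redistributing a small positive slope into the flat region — keeping $\phi$ nondecreasing with the correct endpoint values by borrowing the slope from the adjacent region where $\phi^*$ rises. By Jensen's inequality, spreading a fixed total increase over a longer interval strictly raises $\int\sqrt{\dot\phi}\,f$ whenever $f_s\ge\eta>0$ there, so the net change is strictly positive, contradicting optimality; this rules out flat parts. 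The identical argument applied to a vertical segment, using the symmetry of the reformulation (swap the roles of $\alpha$ and $\beta$), rules out jumps. A nondecreasing surjection of $[0,1]$ with no flat part and no jump is exactly a strictly increasing homeomorphism, giving the claim.

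The step I expect to be the genuine obstacle is this regularity argument: making the redistribution simultaneously \emph{admissible} and \emph{strictly profitable}, while the integrand along the offending segment is controlled only by the lower stabilization $f_s$ rather than by $f$ itself. Quantifying the gain requires comparing $\int\sqrt{\dot\alpha\,\dot\beta}\,f_s$ on the tilted segment against the (zero) original contribution and against any loss from the compensating modification, and checking that a sufficiently localized perturbation makes the former dominate; the non-negativity of $f$ together with the strict positivity furnished by (H2) are precisely what close this gap. By contrast, the existence half is a fairly standard direct-method argument once the concavity-based upper semicontinuity and the Arzel\`a--Ascoli compactness in the symmetric gauge are in place.
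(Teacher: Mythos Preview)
The paper does not prove this statement. Theorem~\ref{thm:Younes_Trouve} is quoted verbatim from Trouv\'e and Younes~\cite{article} (their Theorem~3.1 and Proposition~5.1) and is used purely as an ingredient in the proof of Theorem~\ref{thm:existence}; the paper supplies no argument of its own for it. So there is nothing in the paper to compare your proposal against.

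That said, your outline is in the right spirit and follows roughly the strategy of the original Trouv\'e--Younes proof: compactness via a normalized parametrization of the graph and Arzel\`a--Ascoli, upper semicontinuity driven by the concavity of $(p,q)\mapsto\sqrt{pq}$, and then ruling out horizontal and vertical pieces of the optimal graph by a local competitor that exploits (H2). Two places deserve more care than you give them. First, the upper-semicontinuity step is not quite as automatic as you suggest: the integrand is a product $\sqrt{\dot\alpha\dot\beta}\,f(\alpha,\beta)$, and while the first factor is concave in the derivatives, the second depends on the state and is only lower-semicontinuous along the limit because of the discontinuity segments in (H1). One has to argue that the limit graph spends zero time on these segments (or, equivalently, work with $f_s$ and show the value is unchanged), and your one-line ``absorbed by replacing $f$ with $f_s$'' hides a genuine argument. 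Second, in the regularity step you need to exhibit a competitor that is \emph{globally} admissible in $\mathcal D^*$ (nondecreasing with the correct endpoints) and for which the gain on the tilted segment strictly dominates the loss where you borrowed slope; ``Jensen's inequality'' alone does not settle this, since the borrowed region may itself carry positive $f$-mass. The original proof handles this by a careful localization and an explicit estimate; your sketch correctly identifies this as the crux but does not close it.
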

For the statements regarding the higher regularity case, we will in addition need the following technical result:
\begin{theo}[Theorem 3.3 in \cite{article}]\label{thm:Younes_Trouve2} Let $f$ be a nonnegative measurable function on $[0,1]^2$, and assume that $U_f : \mathcal{D}^*\rightarrow \mathbb{R}, \phi \mapsto \int_D \sqrt{\dot{\phi}(x)}f(x,\phi(x))d x$ reaches its maximal value at a strictly increasing continuous function $\phi^* \in \mathcal{D}^*$. Then for any $x_0 \in [0,1]$, if $f(x_0,\phi(x_0))>0$ and if $f$ is locally Hölder continuous, then $\phi^*$ is differentiable at $x_0$, with strictly positive derivative, and $\dot{\phi}^*$ is continuous in a neighborhood of $x_0$.
\end{theo}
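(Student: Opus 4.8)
The plan is to prove the local $C^1$ regularity of the maximizer $\phi^*$ by localizing the problem and then exploiting a \emph{quantitative} version of the concavity inequality for $\sqrt{\cdot}$, using the local Hölder continuity of $f$ to control the deficit. Throughout I write $g=\sqrt{\dot\phi^*}$, $y_0=\phi^*(x_0)$, and I work on small intervals $[a,b]\ni x_0$ with $\Delta:=\phi^*(b)-\phi^*(a)$ and mean slope $s_0:=\Delta/(b-a)$. The first step is \emph{restriction optimality}: since $U_f$ is additive over subintervals and any continuous non-decreasing competitor on $[a,b]$ sharing the endpoint values $\phi^*(a),\phi^*(b)$ can be glued to $\phi^*$ outside $[a,b]$ to produce an element of $\mathcal D^*$, the restriction $\phi^*|_{[a,b]}$ must maximize $\phi\mapsto\int_a^b\sqrt{\dot\phi}\,f(x,\phi(x))\,dx$ among all such competitors.

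Using $f(x_0,y_0)>0$ and continuity, I fix a small box on which $m\le f\le M$ with $m>0$. On this box I first eliminate the singular part of the derivative measure $D\phi^*$: if its (continuous) singular part carried positive mass $\sigma$ on $[a,b]$, redistributing $\sigma$ uniformly into the absolutely continuous slope yields a competitor $\psi$ with the same endpoints and $\dot\psi\ge\dot\phi^*$ pointwise, so that $\int_a^b\sqrt{\dot\psi}\,f>\int_a^b\sqrt{\dot\phi^*}\,f$ because $f>0$, contradicting optimality. Hence $\phi^*$ is absolutely continuous near $x_0$ and $\int_a^b\dot\phi^*\,dx=\Delta$, i.e. $\int_a^b g^2\,dx=\Delta$.

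The heart of the argument is a deficit estimate. Writing $c_0=f(x_0,y_0)$, Hölder continuity gives $|f-c_0|\le\epsilon:=C(\operatorname{diam})^\alpha$ on the box. Comparing $\phi^*|_{[a,b]}$ with the straight-line competitor of slope $s_0$ yields the lower bound $\int_a^b\sqrt{\dot\phi^*}\,f\ge(c_0-\epsilon)\sqrt{(b-a)\Delta}$, while Cauchy–Schwarz gives $\int_a^b\sqrt{\dot\phi^*}\,f\le(c_0+\epsilon)\int_a^b g\,dx$ with deficit $(b-a)\Delta-(\int_a^b g)^2=(b-a)^2\operatorname{Var}_{[a,b]}(g)$. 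Combining and squaring yields $\operatorname{Var}_{[a,b]}(g)\lesssim s_0\,\epsilon$, so $\sqrt{\dot\phi^*}$ is $L^2$-close to its average $\approx\sqrt{s_0}$ with a modulus $\sim(\operatorname{diam})^\alpha$. Summing this oscillation control over dyadically shrinking intervals at $x_0$ shows the mean slopes $s_0$ form a Cauchy family as $\operatorname{diam}\to0$; their limit is $\dot\phi^*(x_0)$, establishing differentiability at $x_0$. Positivity of $\dot\phi^*(x_0)$ and continuity of $\dot\phi^*$ on a neighborhood then follow from the first-order optimality condition: at points of positive slope the momentum $p(x)=f(x,\phi^*(x))/\bigl(2\sqrt{\dot\phi^*(x)}\bigr)$ obeys a weak balance law driven by $\partial_y f$, and with $f\ge m>0$ this forces $\dot\phi^*$ to stay uniformly positive; re-running the estimate with $x_0$ ranging over the box gives a uniform modulus, hence continuity of $\dot\phi^*$ there.

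The main obstacle I anticipate is promoting the averaged, $L^2$-level information into genuine \emph{pointwise} differentiability with a \emph{continuous} derivative. The redistribution competitor cleanly removes the singular part, but the concavity deficit controls only the $L^2$-oscillation of $\sqrt{\dot\phi^*}$ at each scale; ensuring that the dyadic mean slopes actually converge (rather than merely having small oscillation scale by scale) and deriving positivity of the limiting slope both hinge on turning the first-order condition into a quantitative statement when $f$ is only Hölder, hence possibly nondifferentiable. This is where the bulk of the technical work, and the genuine dependence on the Hölder exponent $\alpha$, will reside.
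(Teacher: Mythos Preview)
The paper does not prove this statement at all: it is quoted verbatim as Theorem~3.3 from Trouv\'e and Younes~\cite{article} and used as a black box in the proof of Theorem~\ref{thm:existence}. There is therefore no ``paper's own proof'' to compare your proposal against.

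As to the proposal itself, the overall architecture is sound and is essentially the one used in~\cite{article}: localize using $f(x_0,\phi^*(x_0))>0$, use restriction optimality, kill the singular part of $D\phi^*$ by redistributing it into the absolutely continuous part (which strictly increases the functional since $f>0$), and then compare with the linear competitor to extract a deficit bound. The weak point is exactly where you flag it. Your variance estimate $\operatorname{Var}_{[a,b]}(g)\lesssim s_0\,\epsilon$ controls the $L^2$ oscillation of $g=\sqrt{\dot\phi^*}$ but does not by itself give a Cauchy condition on the mean slopes across scales, because $s_0$ and $\epsilon$ both depend on the interval and you have not yet shown $s_0$ is bounded above and below. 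The argument in~\cite{article} closes this gap by first deriving two-sided bounds on the mean slope (using the linear competitor from both directions together with $0<m\le f\le M$), which forces $s_0$ to lie in a fixed compact interval of $(0,\infty)$ for all small intervals around $x_0$; only then does the deficit estimate yield genuine convergence of the slopes and hence differentiability with strictly positive derivative. Your invocation of a ``momentum'' $p(x)$ satisfying a weak balance law is not available here, since $f$ is only H\"older and $\partial_y f$ need not exist even as a distribution with the required regularity; you should replace that step by the purely variational two-sided slope bound.
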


\begin{proof}[Proof of Theorem~\ref{thm:existence}]
We start by proving the formula for the geodesic distance. Using the explicit formula for parametrized curves that was obtained in Theorem~\ref{thm:completion}, we can write the geodesic distance as 
\begin{align}\label{quotien_distance_BV2}
\operatorname{dist}^{\mathcal S}_{a,b}([c_1],[c_2])&=
2b\sqrt{\ell_{c_1}+\ell_{c_2}-2\sup_{\gamma_1,\gamma_2\in \bar \Gamma}\int_I\sqrt{\dot\gamma_1(u)\dot\gamma_2(u)} \tilde f_{a,b}\left(\gamma_1(u),\gamma_2(u))\right) d u},
\end{align}
where $\tilde f_{a,b} : I\times I  \rightarrow  \mathbb{R}$ is defined by
$$
\tilde f_{a,b}(x,y )=\left\{ \begin{array}{cl}
          \sqrt{|\dot{c}_1(x)| |\dot{c}_2(y)|}\cos\left(\frac{a}{2b} \cos^{-1}(\frac{\dot{c}_1(x)\cdot \dot{c}_2(y)}{|\dot{c}_1(x)| |\dot{c}_2(y)|})\right) & \mbox{ if } \frac{a}{2b} \cos^{-1}(\frac{\dot{c}_1(x)\cdot \dot{c}_2(y)}{|\dot{c}_1(x)| |\dot{c}_2(y)|}) \leq \pi   \\
          -1 & \mbox{ otherwise.}  
     \end{array} \right. 
$$
This formulation is, however, not convenient for us, as the function $\tilde f_{a,b}$ is not non-negative and thus one cannot directly apply the results of~\cite{article}. Thus we will first show that the the above optimization problem does not change when we substitute $\tilde f_{a,b}$ by the non-negative function $f_{a,b}$, as defined in~\eqref{eq:f_ab}. Since $f_{a,b} = \max{(\tilde f_{a,b},0)}$, we have
\begin{align*}
&\sup_{\gamma_1,\gamma_2\in \bar \Gamma}\int_I\sqrt{\dot\gamma_1(u)\dot\gamma_2(u)} \tilde f_{a,b}\left(\gamma_1(u),\gamma_2(u))\right) d u \\
&\qquad \qquad \leq \sup_{\gamma_1,\gamma_2\in \bar \Gamma}\int_I\sqrt{\dot\gamma_1(u)\dot\gamma_2(u)} f_{a,b}\left(\gamma_1(u),\gamma_2(u))\right) d u
\end{align*}
Let  $\gamma_1, \gamma_2 \in \bar{\Gamma}$, and let $A = \{u \in I,  \tilde f_{a,b}\left(\gamma_1(u),\gamma_2(u))\right) < 0\}$ the open set of negative parts of $\tilde f_{a,b}$. We can write $A$ as an at most countable disjoint union of open intervals $A=\bigcup_n I_n$ with $I_n = ]u_n^-,u_n^+[$. Now let us construct reparametrizations $\tilde \gamma_1, \tilde \gamma_2$ with derivatives equal to zero on $A$. We set $\tilde{\gamma}_1(u) = \gamma_1(u)$, $\tilde{\gamma}_2(u) = \gamma_2(u)$ for $u\in I\backslash A$ and :
$$
    \tilde{\gamma}_1(u) = \begin{cases}
    \gamma_1(2u-u_n^-) &\mbox{ for } u \in ]u_n^-,1/2(u_n^- +u_n^+)[ \cr
    \gamma_1(u_n^+) &\mbox{ for } u \in ]1/2(u_n^- +u_n^+), u_n^+[ 
    \end{cases}
$$
$$
    \tilde{\gamma}_2(u) = \begin{cases}
    \gamma_2(u_n^-) &\mbox{ for } u \in ]u_n^-,1/2(u_n^- +u_n^+)[ \cr
    \gamma_2(2u - u_n^+ ) &\mbox{ for } u \in ]1/2(u_n^- +u_n^+), u_n^+[ 
    \end{cases}
$$
We clearly have $\tilde{\gamma}_1,\tilde{\gamma}_2 \in \bar \Gamma$ and for $u \in I_n$, $\sqrt{\dot\gamma_1(u)\dot\gamma_2(u)} = 0$, so 
\begin{align*}
    \int_I\sqrt{\dot{\tilde{\gamma}}_1(u)\dot{\tilde{\gamma}}_2(u)} \tilde f_{a,b}\left(\tilde{\gamma}_1(u),\tilde{\gamma}_2(u))\right) d u &= \int_{I\backslash A}\sqrt{\dot\gamma_1(u)\dot\gamma_2(u)} \tilde f_{a,b}\left(\gamma_1(u),\gamma_2(u))\right) \\
    &= \int_I\sqrt{\dot\gamma_1(u)\dot\gamma_2(u)} f_{a,b}\left({\gamma}_1(u),{\gamma}_2(u))\right),
\end{align*}
which proves the equivalence of the two optimization problems.

Next we will prove the equivalent definition of the distance, where the infimum is taken over one BV function in the space $\mathcal{D}^*$. Thus we consider the two functionals
\begin{align}\label{eq:min_onefunction}
\begin{array}{ccl}
     \mathcal{D}^* &\rightarrow &\mathbb{R}  \\
     \phi & \mapsto &\int_I \sqrt{\dot{\phi}(x)} f_{a,b}(x,\phi(x)) d x
\end{array}  
\end{align}
and 
\begin{align}\label{eq:min_twofunctions}
\begin{array}{ccl}
     \bar \Gamma \times \bar \Gamma&\rightarrow &\mathbb{R}  \\
     (\gamma_1,\gamma_2) & \mapsto &\int_I \sqrt{\dot{\gamma}_1(u)\dot{\gamma}_2(u)} f_{a,b}\left(\gamma_1(u),\gamma_2(u)\right) du.
\end{array}  
\end{align}
 We will show a slightly stronger statement, namely  the following claim:\\
\emph{Claim A:
For $\phi \in \mathcal{D}^*$, there exist $\gamma_1, \gamma_2 \in \Bar{\Gamma}$, such that $$ \int_I \sqrt{\dot{\gamma}_1(u)\dot{\gamma}_2(u)} f_{a,b}\left(\gamma_1(u),\gamma_2(u)\right) d u=\int_I \sqrt{\dot{\phi}(x)} f_{a,b}(x,\phi(x)) d x .$$ 
Conversely, for $\gamma_1, \gamma_2 \in \Bar{\Gamma}$, there exists $\phi \in \mathcal{D}^*$ such that 
$$\int_I \sqrt{\dot{\phi}(x)} f_{a,b}(x,\phi(x)) d x = \int_I \sqrt{\dot{\gamma}_1(u)\dot{\gamma}_2(u)} f_{a,b}\left(\gamma_1(u),\gamma_2(u)\right) d u.$$ 
}

To show Claim A, let $\phi \in \mathcal{D}^*$ and $\mu$ the corresponding probability measure given by \ref{eq:def_mu}. By Lebesgue's decomposition theorem, we may write $\mu = \omega_\phi d x + \nu_{\phi} + \sum_{n \in \mathbb{N}}a_n\delta_{x_n}$, where $\omega_\phi dx$ is the absolutely continuous part of $\mu$ with respect to the Lebesgue measure, $\nu_\phi$ the singular continuous part, and for all $n\in \mathbb{N}$, $x_n \in I$ and $a_n\geq0$. The latter part can be seen as the (at most countable) jumps of $\phi$ located at the points $x_n$ and of amplitude $a_n$.  
We then consider the set $$C=\mbox{Graph}(\phi)\cup\bigcup_{n\in \mathbb{N}}\{x_n\}\times[\phi(x_n), \phi(x_n)+a_n]$$ \\
 Then $C$ is compact as it is clearly bounded and its closedness can be shown from the definition using the left continuity of $\phi$. Furthermore, $C$ is connected and $\mathcal{H}_1(C)<\infty$, since $\phi$ is of bounded variation and $\sum_n a_n \leq 1 < \infty$. Therefore $C$ is the image of a rectifiable curve, that can be reparametrized as an injective, Lipschitz continuous curve $\gamma$, cf. \cite[Lemmas 3.1 and 3.12]{falconer_1985}. We write
\begin{align}
    \gamma : 
\left\{\begin{array}{ccl} 
[0,1] & \rightarrow & [0,1]^2 \\
u &\mapsto & (\gamma_1(u), \gamma_2(u))
\end{array} \right. ,
\end{align}
where $\gamma_1$ and $\gamma_2$ are Lipschitz continuous, non-decreasing and differentiable almost everywhere (and we will let $\dot{\gamma}_i(u)=0$ if $\gamma_i$ is not differentiable in $u$). We then calculate:
\begin{align*}
     &\int_I \sqrt{\dot{\gamma}_1(u)\dot{\gamma}_2(u)} f_{a,b}\left(\gamma_1(u),\gamma_2(u)\right) d u \\
     &\qquad \qquad = \int_I   \sqrt{\dot{\gamma_1}(u)\dot{\gamma_2}(u)} f_{a,b}(\gamma_1(u),\gamma_2(u)) 1_{\dot{\gamma_1}(u)>0}(u)d u \\
    &\qquad \qquad = \int_I \dot{\gamma_1}(u)\sqrt{\frac{\dot{\gamma_2}(u)}{\dot{\gamma_1}(u)}} f_{a,b}(\gamma_1(u),\gamma_2(u)) 1_{\dot{\gamma_1}(u)>0}(u) d u \\
    &\qquad \qquad = \int_I \sum_{u \in \gamma_1^{-1}(x)} \sqrt{\frac{\dot{\gamma_2}(u)}{\dot{\gamma_1}(u)}} f_{a,b}(\gamma_1(u),\gamma_2(u)) 1_{\dot{\gamma_1}(u)>0}(u)d x,
\end{align*}
where $\gamma_1^{-1}(x) = \{u \in I, \gamma_1(u)=x\}$ and $1_C$ denotes the indicator function for condition $C$. The last equality follows from the area formula \cite[Theorem 3.2.3]{Federer}. Indeed, given the assumptions on $\gamma_1$, it is differentiable almost everywhere and we have by (\cite{mattila1999geometry}, p.103) that $\{\gamma_1(u), \dot{\gamma}_1(u) = 0 \}$ is of Lebesgue measure zero. Thus, for almost all $x \in [0,1]$, $\gamma_1^{-1}(x)$ is reduced to a single point. Then, by setting $x=\gamma_1(u)$, we have $(\gamma_1(u),\gamma_2(u)) = (x, \phi(x))$ with $\phi(x)=\gamma_2(\gamma_1^{-1}(x))$.

It follows that for almost all $u$ such that $\dot{\gamma_1}(u)>0$, one has:
$$\dot{\gamma}_2(u) = \frac{d}{du}(\phi\circ\gamma_1(u)) = \dot{\phi}(\gamma_1(u))\dot{\gamma_1}(u) = \dot{\phi}(x) \dot{\gamma_1}(u),$$ and thus $\dot{\phi}(x) = \frac{\dot{\gamma_2}(u)}{\dot{\gamma_1}(u)}$. 
Going back to the original equality, this leads to 
\begin{align*}
    \int_I \sqrt{\dot{\gamma}_1(u)\dot{\gamma}_2(u)} f_{a,b}\left(\gamma_1(u),\gamma_2(u)\right) d u &= \int_I \sum_{u \in \gamma_1^{-1}(x)} \sqrt{\frac{\dot{\gamma_2}(u)}{\dot{\gamma_1}(u)}} f_{a,b}(\gamma_1(u),\gamma_2(u)) 1_{\dot{\gamma_1}(u)>0}(u)d x \\
    &= \int_I  \sqrt{\dot{\phi}(x)} f_{a,b}(x,\phi(x)) d x,
\end{align*} 
which proves the first direction of Claim A. 

To prove the converse direction of Claim A, we let $\gamma_1,\gamma_2 \in \Bar{\Gamma}$, where we can choose $\gamma_1, \gamma_2$, up to a reparametrization, to be Lipschitz continuous.
We consider the generalised inverse $\gamma_1^{-} \in \mathcal{D}^*$, and let $\phi = \gamma_2\circ\gamma_1^{-}$.
By \cite[Lemma 5.8]{article} the generalized inverse is again an element of $\mathcal{D}^*$. Since $\gamma_2$ is Lipschitz continuous, and since composition with Lipschitz functions keeps  $\mathcal{D}^*$ invariant,
we have that $\phi \in \mathcal{D}^*$, see e.g. \cite[Theorem 4]{josephy}. Now one can obtain the desired equality
$$\int_I \sqrt{\dot{\phi}(x)} f_{a,b}(x,\phi(x)) d x = \int_I\sqrt{\dot{\gamma}_1(u)\dot{\gamma}_2(u)} f_{a,b}\left(\gamma_1(u),\gamma_2(u)\right) d u,$$
by a similar computation as above, which concludes the proof of Claim A.

Now the first statement  of item 1 
-- $a<b$ and $c_1,c_2\in PC^1(I,\mathbb R^d)$-- follows directly from Theorem~\ref{thm:Younes_Trouve}: since $c_1$ and $c_2$ are assumed to be piecewise $C^1$, the function $f_{a,b}$ is bounded and continuous when $\dot{c}_1$ and $\dot{c}_2$ are continuous. For $x$ (resp. $y$) a point of discontinuity of $\dot{c}_1$ (resp. $\dot{c}_2$), $f_{a,b}$ is not continous on the vertical segment $\{x\}\times [0,1]$ (resp. the horizontal segment $[0,1]\times \{y\}$. Thus $f_{a,b}$ satisfies (H1). For $a<b$ we have in addition that $f_{a,b}>c$ where $c>0$ and thus we also have $f_s>c$ does not vanish. By Theorem~\ref{thm:Younes_Trouve} this implies that the minimizer exists in $\mathcal D^*$ and is a strictly increasing homeomorphism.  

It remains to prove the statements assuming additional smoothness of the curves $c_1$ and $c_2$, namely that
$\dot c_i$ are Lipschitz continuous.  Therefore we show that in this case the function $f_{a,b}$ is also Lipschitz continuous: the application $\theta \mapsto \cos(\frac{a}{2b}\cos^{-1}(\theta))$ is differentiable on $]-1,1[$, and its derivative is bounded, therefore $\theta \mapsto \cos(\frac{a}{2b}\cos^{-1}(\theta))$ is Lipschitz continuous on $[-1,1]$. As $\dot{c}_1$ and $\dot{c}_2$ are Lipschitz continuous, the  function $x,y \mapsto \frac{\dot{c}_1(x)\cdot\dot{c}_2(y)}{|\dot{c}_1(x)| |\dot{c}_2(y)|}$ is also Lipschitz continuous. Therefore by composition, $f_{a,b}$ is Lipschitz continuous and thus also H\"older continuous since we are working on a compact domain. We have already shown that the minimizer $\phi$ is strictly increasing and continuous. As $f_{a,b}$ is strictly positive everywhere we obtain by Theorem~\ref{thm:Younes_Trouve2} that $\phi$ is of class $\mathcal{C}^1$ on all of $I$, which concludes the proof of the second statement of point 1.

For the second item, $a\geq b$, there may exist areas where $f_{a,b}=0$, which leads to optimizers that have jumps and are thus not continuous. To deal with these difficulty, we will follow the same approach as in~\cite{bruveris2016optimal} and consider a pair of generalized reparametrization functions, that might have vertical parts but no jumps. Using Claim A we can still focus on maximizing~\eqref{eq:min_onefunction} on the space of BV functions, which allows us to use again the result of Trouvé and Younes~\cite{article}. In particular by Theorem~\ref{thm:Younes_Trouve}, cf. \cite[Proposition 5.1]{article}, there exists $\phi \in \mathcal{D}^*$ that maximizes~\eqref{eq:min_onefunction}, and thus by Claim A there exist $\gamma_1,\gamma_2 \in \bar{\Gamma}$ such that $\operatorname{dist}^{\mathcal S}_{a,b}([c_1],[c_2]) = \operatorname{dist}_{a,b}(c_1\circ\gamma_1,c_2\circ \gamma_2)$, which proves the existence result in item 2.

Finally, we shall construct a counter-example when $a>b$ if the curves $c_1$ and $c_2$ are only in the space $AC(I,\mathbb{R}^d)$. To that end, we adapt the counter-example from~\cite[Section 6]{bruveris2016optimal}.
Let $0<\epsilon<\frac{1}{6}$ and define 
\[
v_1(t) = 
    \begin{pmatrix} 
      \cos{\frac{2a\pi}{b}\epsilon  t}\\ 
      \sin{\frac{2a\pi}{b}\epsilon  t}
    \end{pmatrix}, \quad v_2 = \begin{pmatrix} 
      \cos{\frac{4a\pi}{3b}}\\ 
      \sin{\frac{4a\pi}{3b}}
    \end{pmatrix}, \quad \mbox{and} \quad v_3 = \begin{pmatrix} 
      \cos{\frac{4a\pi}{3b}}\\ 
      -\sin{\frac{4a\pi}{3b}}
    \end{pmatrix}.
\]
Then we have that $ \frac{b}{2a}\cos^{-1}(\frac{v_i(t)\cdot v_j(t)}{|v_i(t)| |v_j(t)|}) \geq \frac{\pi}{2}$ for each $i\neq j$, and therefore $f_{a,b}\leq 0$ for those vectors. We define two curves $c_1, c_2 \in AC(I, \mathbb{R}^d$) such that :
    \begin{align*}
        \dot{c}_1(u) &= v_1(u) 1_A(u) + v_2 1_B(u) \\
        \dot{c}_2(u) &= v_1(u) 1_A(u) + v_3 1_B(u)
    \end{align*}
where $B \subset I$  a modified Cantor set such that $B$ is closed, nowhere dense with $\lambda(B) = \frac{1}{2}$, and $A=I\backslash B$. Following the same proof as in \cite[section 6]{bruveris2016optimal}, we have that $$\sup_{\gamma_1,\gamma_2\in \bar \Gamma}\int_If_{a,b}\left(\gamma_1(u),\gamma_2(u)\right) d t = \lambda(A)$$ and is not attained. 
\end{proof}

\section{Proof of Theorem \ref{thm:existenceclosed}}\label{appendix:closed}
\begin{proof}
For $c_1, c_2\in PC^1(S^1,\mathbb R^d)$ and $a,b > 0$ we define the functional 
\begin{align*}
    F : \left\{\begin{array}{ccl}
         S^1 & \rightarrow &\mathbb{R}  \\
         \tau & \mapsto & \inf_{\gamma_1,\gamma_2\in \bar \Gamma}\operatorname{dist}_{a,b}(c_1\circ\gamma_1,c_2\circ S_{\tau}\circ \gamma_2)
    \end{array}\right.
\end{align*}
We aim to show that $F$ is continuous, which will directly lead to the desired conclusion. Therefore let $\epsilon>0, \tau, \tau' \in S^1$. Then
\begin{align*}
   | F( &\tau) - F(\tau')|=
   \lvert\inf_{\gamma_1,\gamma_2\in \bar\Gamma}\operatorname{dist}_{a,b}(c_1\circ\gamma_1,c_2\circ S_{\tau}\circ\gamma_2) - \inf_{\gamma_1',\gamma_2'\in \bar\Gamma}\operatorname{dist}_{a,b}(c_1\circ\gamma_1',c_2\circ S_{\tau'}\circ\gamma_2')\rvert \\
    &=\lvert \operatorname{dist}^{\mathcal S}_{a,b}([c_1],[c_2\circ S_{\tau}]) - \operatorname{dist}^{\mathcal S}_{a,b}([c_1],[c_2\circ S_{\tau'}])\lvert \leq \operatorname{dist}^{\mathcal S}_{a,b}([c_2\circ S_{\tau}],[c_2\circ S_{\tau'}] \\
    &\leq \inf_{\gamma,\gamma'\in \bar\Gamma}\operatorname{dist}_{a,b}(c_2\circ S_{\tau}\circ\gamma,c_2\circ S_{\tau'}\circ\gamma') = \inf_{\gamma,\gamma'\in \Gamma}\operatorname{dist}^{L^2_{a/(2b)}}\left(Q(c_2\circ S_{\tau}\circ\gamma),Q(c_2\circ S_{\tau'}\circ\gamma')\right) \\
    &\leq \operatorname{dist}^{L^2_{a/(2b)}}\left(Q(c_2\circ S_{\tau}),Q(c_2\circ S_{\tau'})\right).
\end{align*}

By definition, we have that $G^{L^2_{a/(2b)}}\leq \max (\frac{a}{2b},1)^2 \ G^{L^2}$, therefore we can deduce that
$$\operatorname{dist}^{L^2_{a/(2b)}}\left(Q(c_2\circ S_{\tau}),Q(c_2\circ S_{\tau'})\right)\leq \max{\left(\frac{a}{2b},1\right)}||Q(c_2\circ S_{\tau})- Q(c_2\circ S_{\tau'}) \|_{L^2}.$$
Since the space $C(I, \mathbb{R}^d)$ is dense in $L^2(I, \mathbb{R}^d)$, we can choose $g\in C(I, \mathbb{R}^d)$ such that $\|Q(c_2) -g\|_{L^2} \leq \epsilon/3$. By change of variable, we also have $\|Q(c_2)\circ S_{\tau} -g\circ S_{\tau}\|_{L^2} \leq \epsilon/3$ and $\|Q(c_2)\circ S_{\tau'} -g\circ S_{\tau'}\|_{L^2} \leq \epsilon/3$. Since $g$ is continuous and $I$ is compact, $g$ is also uniformly continuous by the Heine-Borel theorem. Thus we have, for $|\tau-\tau'|$ small enough,
$$
\|g\circ S_{\tau} - g\circ S_{\tau'} \|_{L^2} \leq \epsilon/3 
$$
Finally we have:
\begin{align*}
  \left| F(\tau) - F(\tau')\right| &\leq \max{\left(\frac{a}{2b},1\right)}\|Q(c_2\circ S_{\tau})- Q(c_2\circ S_{\tau'}) \|_{L^2} \\
  &\leq \max{\left(\frac{a}{2b},1\right)}\left(\|Q(c_2\circ S_{\tau})- g\circ S_{\tau} \|_{L^2}\right. \\
  &\qquad \qquad \qquad + \left.\|g\circ S_{\tau}-g\circ S_{\tau'}\|_{L^2} + \|g\circ S_{\tau'}-Q(c_2\circ S_{\tau'} \|_{L^2} \right) \\
  &\leq \max{\left(\frac{a}{2b},1\right)}(\epsilon/3+\epsilon/3+\epsilon/3) = \frac{a}{2b} \epsilon
\end{align*}
Thus we have shown that $F$ is continuous function on the compact set $S^1$. Consequently there exists an optimal $\tau \in S^1$ such that
$F(\tau) = \inf_{S^1}F
$.
Now the remaining statement follows directly from Theorem~\ref{thm:existence}.
\end{proof}
\end{document}